
\documentclass[11pt]{amsart}


\swapnumbers

\usepackage{amssymb,amsfonts}
\usepackage[mathscr]{eucal}
\usepackage[alphabetic]{amsrefs}
\usepackage{microtype}

\usepackage[ps,matrix,arrow,curve,cmtip]{xy}


\title{Modular isogeny complexes}
\author{Charles Rezk}
\date{ \today}
\address{Department of Mathematics \\
University of Illinois at Urbana-Champaign \\ 
Urbana, IL}
\email{rezk@math.uiuc.edu}
\urladdr{http://www.math.uiuc.edu/~rezk}

\subjclass[2000]{Primary 11G18; Secondary 55N34, 55S25, 11G15}
\keywords{power operations, elliptic curves, Morava E-theory}



\numberwithin{equation}{section}

\makeatletter
  \let\c@subsection\c@equation
\makeatother

\theoremstyle{plain}   

\newtheorem{thm}[subsection]{Theorem}
\newtheorem{prop}[subsection]{Proposition}
\newtheorem{cor}[subsection]{Corollary}
\newtheorem{lemma}[subsection]{Lemma}

\theoremstyle{remark}
\newtheorem{rem}[subsection]{Remark}    
\newtheorem{exam}[subsection]{Example}

\theoremstyle{plain}



\DeclareMathOperator{\Cok}{Cok}

\DeclareMathOperator{\Ker}{Ker}

\newcommand{\Hom}{{\operatorname{Hom}}}

\newcommand{\ra}{\rightarrow}

\newcommand{\xra}{\xrightarrow}





\newcommand{\len}[1]{\lvert#1\rvert}
\newcommand{\set}[2]{{\{\,#1\mid#2\,\}}}

\newcommand{\powser}[1]{[\![#1]\!]}

\newcommand{\F}{\mathbb{F}}
\newcommand{\N}{\mathbb{N}}



\newcommand{\dfn}{\textbf}


\def\defeq{\overset{\mathrm{def}}=}


\setlength{\textwidth}{6.05in}
\setlength{\oddsidemargin}{.225in}
\setlength{\evensidemargin}{.225in}

\setcounter{tocdepth}{1}

\raggedbottom

\tolerance=3000
\hbadness=4000
\hfuzz=1pt

\begin{document}

\newcommand{\laurser}[1]{(\!(#1)\!)}

\newcommand{\M}{\mathcal{M}}
\newcommand{\bM}{\overline{\mathcal{M}}}
\renewcommand{\O}{\mathcal{O}}
\newcommand{\R}{\mathcal{R}}
\newcommand{\Z}{\mathbb{Z}}
\newcommand{\Q}{\mathbb{Q}}
\newcommand{\HH}{\mathbb{H}}
\newcommand{\Spec}{\operatorname{Spec}}
\newcommand{\charac}{\operatorname{char}}

\newcommand{\Schemes}{\mathrm{Schemes}}
\newcommand{\Sets}{\mathrm{Sets}}
\newcommand{\Rings}{\mathrm{Rings}}

\newcommand{\Isog}[1]{[{#1}\mathrm{-Isog}]}
\newcommand{\Ell}{(\mathrm{Ell})}
\newcommand{\Sch}[1]{(\mathrm{Sch}/#1)}
\newcommand{\Def}{\mathrm{Def}}

\newcommand{\Frob}{\operatorname{Frob}}
\newcommand{\univ}{\mathrm{univ}}

\newcommand{\nerve}{\operatorname{nerve}}

\newcommand{\mor}{\mathrm{mor}}
\newcommand{\rank}{\operatorname{rank}}

\newcommand{\disorder}{\operatorname{disorder}}
\newcommand{\cF}{\mathscr{F}}
\newcommand{\gr}{\mathrm{gr}}

\newcommand{\cS}{\mathscr{S}}
\newcommand{\cK}{\mathscr{K}}
\newcommand{\cX}{\mathscr{X}}

\begin{abstract}
We describe a vanishing result on the cohomology of a cochain complex
associated to the moduli of chains of finite subgroup schemes on
elliptic curves.  These results have applications to algebraic
topology, in particular to the study of power operations for Morava
$E$-theory at height $2$. 
\end{abstract}

\maketitle


\section{Introduction}

Let $A$ be an abelian group (possibly infinite), $k$ a commutative
ring, $p$ a prime, and $r\geq1$.  Let $\cK_{p^r}^\bullet(A;k)$ be the
cochain complex defined by 
\[
\cK_{p^r}^q(A;k) = \prod_{G_1,\dots,G_q} k,\qquad (\delta f)(G_1,\dots,G_{q+1})=\sum(-1)^k
f(G_1,\dots,\widehat{G_k},\dots, G_{q+1}),
\]
where the product is taken over the set of all increasing chains $G_1\subsetneq
\cdots \subsetneq G_q$ of subgroups of $A$ such that the largest
subgroup in the chain $G_q$ has order $p^r$.
One can show that 
\begin{enumerate}
\item $H^q\cK_{p^r}^\bullet(A;k)=0$ unless $q=r$,
\item $H^r\cK_{p^r}(A;k)$ is a free $k$-module, of rank $n\,p^{r(r-1)/2}$,
  where $n$ is the number of distinct subgroups of $A$ which are
  isomorphic to $(\Z/p)^r$.
\end{enumerate}
This is not a very deep result; it is essentially the theorem of
Solomon-Tits on the cohomology of the Tits building of $GL_r(\Z/p)$.
(This is described in \S\ref{sec:case-A}.)

Nonetheless,  we may consider an elliptic curve $E$ over
an algebraically closed field $k$, of characteristic $0$ or finite
characteristic other than $p$.  Taking $A=E(k)$ the group of
$k$-rational points on $E$, we see that since the $p$-torsion in $A$
is isomorphic to $(\Q_p/\Z_p)^2$, we must have $H^r\cK_{p^r}(E(k);k)=0$ for
$r\geq3$, while $H^1\cK_p(E(k);k)$ and $H^2\cK_p(E(k);k)$ are free
modules of ranks $p+1$ and $p$ respectively.

The goal of this paper is to prove an analogue of this (trivial)
calculation of $H^*\cK_{p^r}^\bullet(E(k);k)$, in which the fixed elliptic
curve over $k$ is replaced by an elliptic curve $E$ over an affine scheme
$S=\Spec A$, and in which the corresponding complex
$\cK^\bullet_{p^r}(E/S)$ is constructed not using the concrete
subgroups of the group of rational points of a fixed curve, but rather from the
moduli of finite subgroup schemes of $E/S$.  In particular, we obtain
a result which makes sense in characteristic $p$ (which for us is the
case of interest).  The motivation for proving such a result comes
from algebraic topology. When $E/S$ is a universal deformation of a
supersingular curve,   the complexes $\cK^\bullet_{p^r}(E/S)$ control
the homological properties of the algebra of power operations on the
version of elliptic cohomology associated to this curve; see
\S\ref{subsec:applications-alg-top} below.

\subsection{Subgroups of elliptic curves as a moduli problem}

Let $\Ell$ denote the category whose objects $E/S$ are elliptic schemes
$E$ over a base scheme $S$, and whose morphisms $E/S\ra E'/S'$ are
fiber squares.

Given an elliptic curve $E/S$, let $\Isog{N}(E/S)$ denote the set of
locally free finite commutative $S$-subgroup schemes $G\subset E$ which
are rank $N$ over $S$ \cite{katz-mazur}*{\S6.5}.  According to
\cite{katz-mazur}*{6.5.1}, $\Isog{N}$ is relatively representable and
finite over $\Ell$.  That is, given an elliptic curve $E/S$, the
functor on $\Sch{S}$ given by $T\mapsto \Isog{N}(E_T/T)$ is
represented by an $S$-scheme $\Isog{N}_{E/S}$
\cite{katz-mazur}*{\S4.2}, which is finite and flat, and hence locally
free, over $S$; 
furthermore, the rank of $\Isog{N}_{E/S}$ is \emph{constant}, and is
equal to the number of subgroups of order $N$ in $(\Q/\Z)^2$.

For each element $G\subset E$ of $\Isog{N}(E/S)$ there
is an  $N$-isogeny $f\colon E\ra E'$ of curves over $S$ with kernel
$G$, unique up to unique isomorphism in the category of isogenies with
domain $E$, hence the notation.

Now suppose that $S=\Spec A$ is an affine scheme.  Then
$\Isog{N}(E/S)$ is necessarily affine.  I write $\cS_N(E/S)$ for the function
ring of $\Isog{N}(E/S)$; it is naturally an $A$-algebra, finite and
locally free.  Furthermore,
given a map $T\ra S$ of schemes induced by a map $A\ra B$ of rings, we
have $\cS_N(E_T/T)\approx \cS_N(E/S)\otimes_A B$.

\begin{rem}
If we use the language of moduli stacks, then we can say that $\cS_N$
is a coherent sheaf on the moduli stack of elliptic curves; in fact,
it is the direct image of the structure sheaf along the evident map of stacks
$\mathscr{M}_{\Isog{N}}\ra \mathscr{M}_{\Ell}$ which forgets about the
subgroup.  Likewise, the complex
$\cK_N^\bullet$ to be defined  below is a complex of coherent sheaves on
$\mathscr{M}_{\Ell}$. 
We prefer to avoid
stack language, and discuss these objects in more concrete terms.  
\end{rem}

\subsection{Chains of subgroups}
\label{subsec:chains-of-subgroups}

For integers $N_1,\dots,N_q\geq1$, let
$\Isog{N_1,\dots,N_q}(E/S)$ denote
the set of sequences $\underline{G}=(G_1\subsetneq \dots \subsetneq G_q)$,
where $G_i$ is a locally 
free commutative $S$-subgroup scheme of $E$ of rank
$N_1\cdots N_i$, and where $G_{i-1}$ is a subscheme of $G_i$.  Thus,
$G_i/G_{i-1}$ is a finite group scheme over $S$ of rank $N_i$.
As above, given $E/S$, the functor on $\Sch{S}$ given by $T\mapsto
\Isog{N_1,\dots, N_q}(E_T/T)$ is represented by an $S$-scheme
$\Isog{N_1,\dots,N_q}_{E/S}$, finite and locally free over $S$.
Again, if $S=\Spec(A)$ is affine, so is $\Isog{N_1,\dots,N_q}_{E/S}$ with
function ring denoted
$\cS_{N_1,\dots,N_q}(E/S)$, and this ring is finite and locally free
as an $A$ module.  

As usual, elements $\underline{G}$ of $\Isog{N_1,\dots,N_q}({E/S})$ can
be identified with suitable isomorphism classes of sequences
\[
E\xra{f_1} E_1\xra{f_2} \dots \xra{f_q} E_q
\]
of isogenies with $\Ker(f_if_{i-1}\cdots f_i)=G_i$.

There are maps $U_i\colon \Isog{N_1,\dots,N_q}\ra \Isog{N_1,\dots
  ,N_{i-1}N_{i},\dots, N_q}$ for $i=1,\dots,q$, defined by
forgetting the $i$th group in the sequence $(G_1\subsetneq\cdots \subsetneq
G_q)$.  We write $u_i\colon \cS_{N_1,\dots,N_{i-1}N_i,\dots, 
  N_q}(E/S)\ra \cS_{N_1,\dots,N_q}(E/S)$ for the corresponding map of
rings, when $S$ is affine.

There are also evident maps $\Isog{N_1,\dots,N_q}\ra \Isog{1}$, which
forget about the information abut 
finite subgroups.  I'll write $s\colon \cS_1(E/S)\ra \cS_{N_1,\dots,N_q}(E/S)$ for
the corresponding map of rings, which is precisely the map which
exhibits $\cS_{N_1,\dots,N_q}(E/S)$ as an $A$-algebra. 

\subsection{The modular $N$-isogeny complex}

Fix an elliptic curve $E/S$, where $S=\Spec(A)$ is an affine scheme,
and let $N\geq 1$.
We define a bounded cochain complex $\cK^\bullet_N(E/S)$ of
$A$-modules as follows. 
follows.  Set 
\[
\cK^q_N(E/S) = \prod_{N_1,\dots,N_q} \cS_{N_1,\dots,N_q}(E/S),
\]
where the product runs through all tuples $(N_1,\dots,N_q)$ of
integers of length $q$ such that $N_1\cdots N_q=N$ and each $N_i>1$.  If
$q=0$ and $N>1$, we have $\cK^0_N=0$.  We also stipulate that if $N=1$, then
$\cK^0_1=\cS_1(E/S)=A$ and $\cK^q_1=0$ for $q>0$. 
Given an element $f\in \cK^q_N(E/S)$, write $f_{N_1,\dots,N_q}$ for
its component in $\cS_{N_1,\dots,N_q}(E/S)$.  
We define the coboundary map $\delta\colon \cK^{q-1}_N\ra \cK^{q}_N$ by
the formula
\[
(\delta f)_{N_1,\dots,N_q} = \sum_{i=1}^{q-1} (-1)^i u_i(f_{N_1,\dots,
  N_iN_{i+1},\dots, N_q}),
\]
where $u_i$ is as defined in \S\ref{subsec:chains-of-subgroups}.  

We will call $\cK^\bullet_N$ the \dfn{modular $N$-isogeny complex},
for lack of a better name.

We are mainly interested in the case when $N=p^r$ for some prime $p$.
For small values of $r$ these complexes appear as follows.
\[\xymatrix@R=1pt@C=35pt{
{\cK^\bullet_1\colon}
& {\cS_1}
\\
{\cK^\bullet_p\colon}
& {0} \ar[r] & {\cS_p} 
\\
{\cK^\bullet_{p^2}\colon}
& {0} \ar[r] & {\cS_{p^2}} \ar[r]^-{u_1} & {\cS_{p,p}}
\\
{\cK^\bullet_{p^3} \colon}
& {0} \ar[r] & {\cS_{p^3}} \ar[r]^-{(u_1,u_1)} & {\cS_{p^2,p}\times
  \cS_{p,p^2}} 
\ar[r]^-{(u_1,-u_2)} & {\cS_{p,p,p}}
}\]

\subsection{Main theorem}

Our main result is the following.
\begin{thm}\label{main-thm}
Let $E/S$ be an elliptic curve over an affine scheme $S=\Spec A$, and
let $p$ be a prime.
\begin{enumerate}
\item If $j\neq r$, then $H^j\cK_{p^r}^\bullet(E/S)=0$.
\item $H^r\cK_{p^r}^\bullet(E/S)$ is finite and locally free as an
  $A$-module.
\item $H^r\cK_{p^r}^\bullet(E/S)=0$ if $r\geq 3$.
\item There are natural isomorphisms of $A$-modules
\[
H^0\cK_{1}^\bullet(E/S)=\cS_1(E/S),\qquad
H^1\cK_p^\bullet(E/S)=\cS_p(E/S),
\]
\[
H^2\cK_{p^2}^\bullet(E/S)\approx \Cok[s\colon \cS_1(E/S)\ra \cS_p(E/S)].
\]
\end{enumerate}
\end{thm}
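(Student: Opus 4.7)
The plan is to reduce the global statement to a fibrewise calculation, using that $\cK^\bullet_{p^r}(E/S)$ is a bounded complex of finite flat $A$-modules whose formation commutes with arbitrary base change $A\to B$. By the standard cohomology-and-base-change criterion for such a complex, once one shows that $\dim_{k(\mathfrak{p})}H^j\bigl(\cK^\bullet_{p^r}(E/S)\otimes_A k(\mathfrak{p})\bigr)$ vanishes for $j\neq r$ and is locally constant on $\Spec A$ in degree $j=r$, the assertions (1)--(3) follow at once: the lower cohomology vanishes globally, $H^r$ is finite locally free of the common fibrewise rank, and in particular $H^r=0$ whenever that rank is zero. Thus the main task is to compute $\cK_{p^r}^\bullet(E_k/k)$ fibrewise for an elliptic curve $E_k$ over an algebraically closed field $k$.

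Over a field $k$ of characteristic different from $p$, every finite subgroup scheme of $E_k$ of $p$-power order is étale, so $\cK_{p^r}^\bullet(E_k/k)$ is identified with the discrete complex $\cK_{p^r}^\bullet\bigl(E_k(k)[p^\infty];k\bigr)$ of the introduction. Since $E_k(k)[p^\infty]\cong(\Q_p/\Z_p)^2$, the Solomon--Tits calculation recalled in \S\ref{sec:case-A} delivers cohomology concentrated in degree $r$, of $k$-dimension $p+1,\,p,\,0,\,0,\dots$ for $r=1,2,3,4,\dots$, which is exactly what the theorem demands.

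The substantive content is in the case $\charac k=p$, where finite subgroup schemes of $E_k$ are typically non-reduced and must be enumerated scheme-theoretically; for instance, over an ordinary $E_k$ the scheme $\Isog{p}_{E_k/k}$ has only two geometric points (the étale $\Z/p$ and the connected $\mu_p$) yet has total rank $p+1$, because the connected point carries multiplicity $p$. I would split into the ordinary and supersingular sub-cases. In the ordinary case the connected-étale decomposition $E_k[p^r]\cong\mu_{p^r}\times\Z/p^r$ lets one write down the flag rings $\cS_{N_1,\dots,N_q}(E_k/k)$ explicitly. In the supersingular case $E_k[p^r]$ is local-local, and I would compute the flag rings using the Dieudonn\'e module of $E_k$, or equivalently via its formal group. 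The principal obstacle is then to verify that, despite the very different subgroup-scheme structure in characteristic $p$, the $k$-dimensions of $H^j\cK_{p^r}^\bullet(E_k/k)$ come out to match those already obtained in characteristic $\neq p$: constancy of the rank of each term of the complex already forces the Euler characteristics to match, but verifying that the cohomology genuinely concentrates in degree $r$ requires scheme-theoretic computation.

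Finally, for part (4), the identifications $H^0\cK_1^\bullet=\cS_1$ and $H^1\cK_p^\bullet=\cS_p$ are immediate from the shape of those complexes. For the isomorphism $H^2\cK_{p^2}^\bullet(E/S)\cong\Cok[s\colon\cS_1(E/S)\to\cS_p(E/S)]$, I would construct a natural $A$-algebra map $v\colon\cS_p\to\cS_{p,p}$ by pullback along the scheme map $\Isog{p,p}\to\Isog{p}$, $(G_1\subset G_2)\mapsto G_1$. A direct check shows that the two composites $\cS_1\xra{s}\cS_p\xra{v}\cS_{p,p}$ and $\cS_1\xra{s}\cS_{p^2}\xra{u_1}\cS_{p,p}$ agree (both equal the structural map of $\cS_{p,p}$ as an $A$-algebra), so $v$ descends to a natural map $\bar v\colon\Cok(s)\to\Cok(u_1)=H^2\cK_{p^2}^\bullet$. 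Since the target is finite locally free by (2) and both cokernels commute with arbitrary base change, proving that $\bar v$ is an isomorphism reduces to a fibrewise check, which falls out of the explicit subgroup-scheme computations of the previous step.
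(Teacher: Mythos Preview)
Your outline correctly reduces to a fibrewise computation (this is the paper's Proposition~\ref{prop:nak-lemma}) and correctly handles characteristic $\neq p$ via Solomon--Tits (as in \S\ref{sec:case-A}). But the characteristic-$p$ case, which you rightly flag as ``the substantive content,'' is not carried out: you only say you would compute the flag rings via the connected--\'etale decomposition or Dieudonn\'e theory, without indicating how the required acyclicity would follow. This is the entire difficulty, and the paper devotes \S\ref{sec:ss-formulas}--\S\ref{sec:case-C} to it.

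The paper's method for the supersingular case differs from your sketch in an essential way: rather than working at a single supersingular geometric point, it works over the \emph{universal deformation} $E_\univ/k\powser{x}$, where the rings $\cS_{p^{r_1},\dots,p^{r_q}}$ admit explicit presentations as quotients of power-series rings \eqref{prop:description-of-complex-ss}. The left-$A$-linear dual of $\cK^\bullet_{p^r}$ is then identified with the degree-$r$ part of the bar complex of a graded associative ring $\Gamma=\bigoplus_r\cS_{p^r}^\ast$, for which an explicit quadratic presentation is found; the acyclicity is then proved by a Priddy-style PBW filtration argument \eqref{prop:koszul-for-ss}. Working over the deformation ring, not the residue field, is what makes this algebra visible. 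Moreover, the paper does \emph{not} treat the ordinary case directly: it observes that the locus where the fibrewise vanishing holds is Zariski-open, and that every neighbourhood of a supersingular point in the moduli contains an ordinary point, so case~(B) follows from case~(C) by semicontinuity.

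For part~(4), your map $\bar v\colon\Cok(s)\to H^2\cK_{p^2}^\bullet$, built from the forgetful map $(G_1\subset G_2)\mapsto G_1$, is in fact the inverse of the map the paper constructs. The paper instead produces $H^2\to\Cok(s)$ from the dual-isogeny section $\Isog{p}\to\Isog{p,p}$ sending $(E\xra{f}E')$ to $(E\xra{f}E'\xra{\hat f}E)$; in this direction the ring maps $\cS_{p,p}\to\cS_p$ and $\cS_{p^2}\to\cS_1$ are visibly surjective (they admit sections), so the induced map on cokernels is an epimorphism between locally free modules of rank~$p$, hence an isomorphism---no further fibrewise verification is needed once~(2) is known. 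Your map works too, but showing it is an isomorphism by your route requires the characteristic-$p$ fibre computation you have left undone.
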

In particular, $H^0\cK_1^\bullet(E/S)$, $H^1\cK_p^\bullet(E/S)$, and
$H^2\cK_{p^2}(E/S)$ are locally free over $A$ of ranks $1$, $p+1$, and
$p$ respectively.

\subsection{Proofs of (3) and (4)}

The main claims of the theorem are (1) and (2), and we can deduce the
remaining statements from these.

\begin{proof}[Proof of (3) using (1) and (2)]
We use a ``dimension count'', meaning we compare ranks of finite and
locally free $A$-modules.  Write $\cS_{N_1,\dots,N_q}$ for
$\cS_{N_1,\dots,N_q}(E/S)$, and $\cK_{N_1,\dots,N_q}^\bullet$ for
$\cK_{N_1,\dots,N_q}^\bullet(E/S)$, with $S=\Spec A$.  
We have that $\cS_N$ is locally
free of \emph{constant} rank as an $A$-module, and that this rank is
equal to the number of subgroups of order $N$ in $(\Q/\Z)^2$.  Thus,
if we write $\sigma(N)=\sum_{d|N} d$ for the number of such subgroups,
then we have $\rank\cS_N =\sigma(N)$, and more generally, 
\[
\rank\cS_{N_1,\dots,N_r} = \sigma(N_1)\cdots \sigma(N_r).
\]
Counting such subgroups of $p$th power order leads to a
generating function
\[
f(T) = \sum_{r\geq0} \rank\cS_{p^r}\,T^r = \sum_{r\geq0}
\sigma(p^r)\, T^r= 
[(1-T)(1-pT)]^{-1},
\]
and from this we obtain
\[
\sum_{r\geq0} \rank\cK^q_{p^r}\,T^r = \sum_{r_1,\dots,r_q>0}
\rank \cS_{p^{r_1},\dots,p^{r_q}}\,T^{r_1+\cdots+r_q} =
\sum_{r_1,\dots,r_q>0} \sigma(r_1)\cdots
\sigma(r_q)T^{r_1+\cdots+r_q} = (f(T)-1)^q.
\]
By (2), the cohomologies $H^j\cK_{p^r}$ are finite and locally free
over $A$, so $\sum_j (-1)^j\rank H^j\cK_{p^r}= \sum_j(-1)^j\rank
\cK_{p^r}^j$.   
Hence by (1) we have
\[
\sum_{r\geq0}\rank H^r\cK_{p^r}^\bullet\, (-T)^r = \sum_{q\geq0}
(1-f(T))^q = (f(T))^{-1} =(1-T)(1-pT).
\]
Thus, $H^r\cK^\bullet_{p^r}=0$ if $r\geq 3$.
\end{proof}

\begin{proof}[Proof of (4) using (1) and (2)] 
The only nontrivial statement is that for $H^2\cK_{p^2}$.
Consider the following commutative square of moduli problems
\[\xymatrix{
{\Isog{p}} \ar[d]_{(E\xra{f}E')\mapsto (E\xra{f}E'\xra{\widehat{f}}E)} 
\ar[rrrr]^{(E\xra{f}E')\mapsto (E)}
&&&& {\Isog{1}} \ar[d]^{(E)\mapsto (E\xra{[p]}E)}
\\
{\Isog{p,p}} \ar[rrrr]_{(E\xra{f} E'\xra{g}E'')\mapsto (E\xra{gf}E'')}
&&&& {\Isog{p^2}} 
}\]
The commutativity of this square encodes the identity $\widehat{f}f=[p]$, where $\widehat{f}$
is the dual isogeny to a $p$-isogeny $f$.  
This square gives  a commutative square
\[\xymatrix{
{\cS_{p^2}(E/S)} \ar[r]^{u_1} \ar[d]
& {\cS_{p,p}(E/S)} \ar[d]
\\
{\cS_{1}(E/S)} \ar[r]_s
& {\cS_p(E/S)}
}\]
of $A$-algebras.
The map $s\colon \cS_1\ra \cS_p$ is
injective with 
locally free cokernel (it's faithfully flat), and the rank of the
cokernel is constant over $A$, with $\rank_A
\cS_p(E/S)/\cS_1(E/S) = (p+1)-1=p$.  Using statement (2)  of \eqref{main-thm},
$\cS_{p,p}(E/S)/\cS_{p^2}(E/S)\approx H_2\cK_{p^2}(E/S)$ is locally
free of 
rank $p$ as well.
The vertical maps in the square are epimorphisms since they admit
sections, so the induced map
$\cS_{p,p}/\cS_{p^2}\ra 
\cS_p/\cS_1$ is an epimorphism between locally free $A$-modules of
the same rank, hence is an isomorphism.
\end{proof}

\subsection{The proof of (1) and (2)}
\label{subsec:reduction-to-cases}

To complete the proof \eqref{main-thm}, note that if $E/S$ is an
elliptic curve, then $S$ admits a cover by open affines $U_i$ such
that $E_{U_i}/U_i$ is given by a Weierstrass equation, and so is the
pullback of an elliptic curve over a scheme of finite type.  
Thus, we may assume
without loss of generality that $S$ is of finite type, and in this
case  we
can replace ``locally free'' with ``projective'' in the statement of
the theorem.  

We use the following application of
Nakayama's lemma.
\begin{prop}\label{prop:nak-lemma}
Let $A$ be a commutative ring, and let $P^\bullet=(0\ra P^0\ra \cdots
\ra P^n\ra 0)$ be a bounded cochain complex of finitely generated projective
$A$-modules.  The following are equivalent.
\begin{enumerate}
\item $H^j(P^\bullet)=0$ for $j\neq n$, and $H^n(P^\bullet)$ is a
  finitely generated projective $A$-module.
\item For every ring homomorphism $A\ra B$, we have
  $H^j(P^\bullet\otimes_A 
  B)=0$ for   $j\neq n$, and $H^n(P^\bullet\otimes_A B)$ is a finitely
  generated projective $B$-module.
\item For every maximal ideal $\mathfrak{m}$ of $A$, we have
  $H^j(P^\bullet\otimes_A A/\mathfrak{m})=0$ for $j\neq n$.
\end{enumerate}
\end{prop}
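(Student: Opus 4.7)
The plan is to prove $(2) \Rightarrow (3) \Rightarrow (1) \Rightarrow (2)$. The implication $(2) \Rightarrow (3)$ is immediate by specializing to $B = A/\mathfrak{m}$.

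For $(1) \Rightarrow (2)$, I would observe that under (1) the augmented complex
\[
0 \to P^0 \to P^1 \to \cdots \to P^n \to H^n(P^\bullet) \to 0
\]
is exact, with every term finitely generated projective. Working downward from the right, each kernel in this sequence is a direct summand of a projective and hence projective, so the augmented complex splits into split short exact sequences; in particular it is contractible. Tensoring a contractible complex over $A$ with any $A$-algebra $B$ preserves contractibility, which gives $H^j(P^\bullet \otimes_A B) = 0$ for $j \neq n$ together with $H^n(P^\bullet \otimes_A B) \cong H^n(P^\bullet) \otimes_A B$ finitely generated projective, yielding (2).

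The main task is $(3) \Rightarrow (1)$. First I would reduce to the case where $A$ is local: for each maximal ideal $\mathfrak{m}$, localization is exact, so $H^j(P^\bullet)_\mathfrak{m} = H^j((P^\bullet)_\mathfrak{m})$; since $A_\mathfrak{m}$ has residue field $A/\mathfrak{m}$ and $(P^\bullet)_\mathfrak{m} \otimes_{A_\mathfrak{m}} A/\mathfrak{m} \cong P^\bullet \otimes_A A/\mathfrak{m}$, hypothesis (3) supplies the residue-field hypothesis for the localized complex. Over a local ring, finitely generated projectives are free, and I would induct on the length of the complex. The condition $H^0(P^\bullet \otimes_A k) = 0$ says $d^0 \otimes k$ is injective between finite-dimensional vector spaces; choosing a basis of $P^0$ and extending the images to a basis of $P^1$, Nakayama allows a change of basis on $P^1$ that realizes $d^0$ as the split inclusion of a direct summand $P^1 = P^0 \oplus P'^1$. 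The contractible sub-complex $P^0 \xrightarrow{\id} P^0$ then splits off, leaving a chain-equivalent complex $0 \to P'^1 \to P^2 \to \cdots \to P^n \to 0$ of strictly shorter length that still satisfies the residue-field hypothesis, so the inductive hypothesis applies.

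To finish, I would globalize: vanishing of $H^j(P^\bullet)_\mathfrak{m}$ for every maximal ideal $\mathfrak{m}$ forces $H^j(P^\bullet) = 0$ when $j \neq n$; the module $H^n(P^\bullet) = P^n / d^{n-1}(P^{n-1})$ is a quotient of the finitely generated $P^n$ by the finitely generated submodule $d^{n-1}(P^{n-1})$, hence finitely presented, and being locally free at each maximal ideal it is projective. The main obstacle is the matrix-normalization step in the local case: one must carry out the change of basis carefully, verify that the resulting shorter complex really does inherit the residue-field vanishing hypothesis, and confirm that chain-equivalent complexes have isomorphic cohomology, so that the induction on length closes.
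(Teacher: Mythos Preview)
Your proposal is correct and follows essentially the same route as the paper: both prove $(1)\Rightarrow(2)$ by observing that (1) makes $P^\bullet$ chain homotopy equivalent to $H^n(P^\bullet)$ concentrated in degree $n$, and both prove $(3)\Rightarrow(1)$ by first reducing to the local case and inducting on the length of the complex, using Nakayama to split off $P^0$ as a summand of $P^1$, then globalizing via localization and the finitely-presented-plus-locally-free criterion for projectivity. Your write-up spells out a few routine details (the splitting of the augmented complex, the explicit basis extension) that the paper leaves implicit, but the argument is the same.
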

 \begin{proof}
 Condition (1) implies that $P^\bullet$ is chain homotopy equivalent to
 the complex consisting of $H^n(P^\bullet)$ in degree $n$, and thus (2)
 follows.  Clearly (2) implies (3).
 
 To show that (3) implies (1), first consider the special case where
 $A$ is a local ring (in which case the $P^j$ are 
 finitely generated free modules), and write $k=A/\mathfrak{m}$.  This
 case is easily proved by
 induction on $n$, the case of $n=0$ being immediate.  If $n>0$, we
 have $H^0(P^\bullet\otimes_A k)=0$, so that
 $P^0\otimes_Ak\ra P^{1}\otimes_A k$ is injective.  Choose a free
 $A$-module $Q$ and a map $Q\ra P^{1}$ so that the induced map
 $g\colon P^{0}\oplus Q \ra P^{1}$ induces an isomorphism after tensoring
 down to $k$.  Thus $g$ is itself an isomorphism by Nakayama's lemma,
 and we conclude that 
 $P^\bullet$ is chain-homotopy-equivalent to the complex obtained from
 $P^\bullet$ by replacing $P^0$ with
 $0$ and $P^{1}$ with $Q$.  The induction hypothesis applies to $Q$,
 so the claim is proved.
 
 For general $A$, note that $H^*(P^\bullet)\otimes_A
 A_{\mathfrak{m}}\approx H^*(P^\bullet\otimes_AA_\mathfrak{m})$, and so
 from what we have shown it follows that $H^j(P^\bullet)=0$ for $j\neq
 n$.  The exact 
 sequence $P^{n-1}\ra P^n\ra H^n(P^\bullet)\ra0$ shows that
 $H^n(P^\bullet)$ is finitely presented, and therefore projective since it is
 locally free.
 \end{proof}

Thus, the proof of \eqref{main-thm} reduces to showing statements (1)
and (2) of \eqref{main-thm} for elliptic curves $E/S$, where $S=\Spec k$
for a field $k$. 
There are three cases we must consider, for a given prime $p$.
\begin{enumerate}
\item [(A)]  The field $k$ has characteristic not equal to $p$.
\item [(B)]  The field $k$ has characteristic $p$, and $E$ is an
  \emph{ordinary} elliptic curve.
\item [(C)]  The field $k$ has characteristic $p$, and $E$ is a
  \emph{supersingular} curve.
\end{enumerate}
In each case we may without loss of generality assume that $k$
algebraically closed.

There is a trick (inspired by its use in \cite{katz-mazur}*{Ch.\ 5})
which allows 
us to deduce case (B) from case (C). 
\begin{enumerate}
\item Note that for $E/S$, the question of whether the claims of
  \eqref{main-thm} are true for $E/S$ depends only on the underlying
  $p$-divisible group of $E$.  Over $S=\Spec(k)$ with $k$
  algebraically closed, there are only three $p$-divisible groups
  which can appear, according to the three cases: (A) $(\Q_p/\Z_p)^2$,
  (B) $\widehat{\mathbb{G}}_m\times   \Q_p/\Z_p$, and (C) the unique
  formal group of height $2$ over $k$.  

  In particular, to prove the theorem for case (B), it suffices to prove it for
  \emph{one} ordinary curve.

\item Given $E/S$ and $k\geq r$, let $U$ be the set of points $s$ in
  $S$ for which we have  that $H^j\cK^\bullet_{p^r}(E\otimes k(s)/\Spec
  k(s))=0$ for 
  $j\neq r$.  If $S$ is of finite type over $\Spec(\Z)$, then $U$ is a
  \emph{Zariski open} subset of $S$.   
\end{enumerate}
In the moduli stack of elliptic curves, every open neighborhood of a
supersingular point contains an ordinary point; more precisely, the
moduli stack admits an etale cover by a collection $\{E_i/S_i\}$ where
each $S_i$ is finite 
type over $\Spec(\Z)$, and such that each open neighborhood of a
supersingular point in $S_i$ contains an ordinary point.   Therefore
(C) implies 
(B).

We'll prove cases (A) and (C) by direct calculation.  Case (A) has an
easy combinatorial proof, while the proof of case (C) amounts to an explicit
calculation of the complex $\cK^\bullet_{p^r}$ at the universal
deformation of a supersingular curve, and will constitute the main
part of the paper.

Thus, in \S\ref{sec:case-A} we prove case (A) (\eqref{prop:koszul-when-p-inverted} and \eqref{prop:description-of-complex-when-p-inverted}).  In
\S\ref{sec:ss-formulas} we give an explicit description \eqref{prop:description-of-complex-ss} of the
complexes $\cK^\bullet_{p^r}(E/S)$, where $E/S$ is the universal
deformation of a suitable
supersingular curve; the main points we need are contained in
\cite{katz-mazur}*{Ch.\ 13}.  Finally, in \S\ref{sec:case-C} we use
this explicit description to prove
\eqref{main-thm} for the universal deformation (\eqref{prop:dual-formulation} and
\eqref{prop:koszul-for-ss}), from which case (C) follows directly;
this part of the  
argument is essentially an application of the proof of the ``PBW basis
theorem'' of \cite{priddy-koszul-resolutions}.

It is possible
to prove (B) by an explicit calculation such as that for (C) given here, and I
hope to give that calculation 
elsewhere. 

\subsection{Applications to algebraic topology}
\label{subsec:applications-alg-top}

This work was motivated by applications to elliptic cohomology (some
actual, some conjectural).  A detailed discussion is outside the scope
of this paper; however, we  can briefly describe a couple of  points of
contact. 

To every one-dimensional formal group $G_0$ of finite height over a
perfect field $k$ of characteristic $p$, there is an associated
generalized cohomology theory $E=E_{G_0/k}$, called the \dfn{Morava
  $E$-theory} associated to $G_0/k$.  The theory $E$ is $2$-periodic
and complex 
orientable, and its formal group $G/\pi_0E$ is the universal deformation of
$G_0/k$ in the sense of Lubin-Tate.  In particular, $\pi_0E\approx
\mathbb{W}k\powser{x_1,\dots,x_{n-1}}$ where $n$ is the height of
$G_0$.  

To each  cohomology theory $E$ is associated a certain  ring $\mathcal{P}$ of
operations, called the ring of
\dfn{power operations}; the ring $\mathcal{P}$ contains the
coefficient ring $\pi_0E$, but 
not centrally.  By work of Strickland
(\cite{strickland-finite-subgroups-of-formal-groups} and
\cite{strickland-morava-e-theory-of-symmetric}; see
\cite{rezk-congruence-condition} for an exposition), the ring $\mathcal{P}$
encodes in  a precise way all information about  the moduli of finite
subgroups of the formal 
group $G$.  

The reduction of $\mathcal{P}$ modulo $p$ makes a crucial appearance
in this paper.  More precisely, suppose that $G_0/\F_{p^2}$ is the
formal completion of a standard 
supersingular curve (defined in \S\ref{subsec:standard-ss}).  Its
universal deformation lives over the ring
$\mathbb{W}\F_{p^2}\powser{x}$; let 
$\mathcal{P}$ be the ring of power operations for the associated
Morava $E$-theory.  Then, as
a result of the calculations of \S\ref{sec:ss-formulas} and
\S\ref{sec:case-C}, we have an isomorphism of rings
\[
\mathcal{P}\otimes\Z/p \approx
\Gamma,
\]
where $\Gamma$ is the ring described in terms of explicit generators
and relations in \S\ref{sec:case-C}; see
\S\ref{subsec:relations-in-gamma} and
\S\ref{subsec:structure-of-gamma}, especially
\eqref{eq:gamma-formula-1}, \eqref{eq:gamma-formula-2}, and
\eqref{eq:gamma-formula-3}.  We note that the resulting admissible
monomial basis we give for $\Gamma$ is compatible with the the
calculations of 
Kashiwabara in \cite{kashiwabara-k2-homology}, though in his context
it is not possible to give an algebra structure. 

In the 1990s, Matt Ando, Mike Hopkins, and Neil Strickland conjectured
that for any Morava $E$-theory associated to any formal group, its
ring $\mathcal{P}$ of power operations would be what is called a
\dfn{Koszul ring}.  I have proved this conjecture (see
\cite{rezk-dyer-lashof-koszul}, forthcoming), using methods of
algebraic topology. 
The argument of \S\ref{sec:case-C} of this paper gives an independent
proof for the case of Morava $E$-theory  of height $2$ formal
groups, in some ways along the lines that Ando, Hopkins, and
Strickland envisioned.

The complexes $\cK^\bullet_{\ell^r}(E/S)$ when $\ell$ is
invertible over the scheme $S$ are closely related to 
work (see \cite{behrens-lawson-isogenies}) on approximations to
the $K(2)$-local      
sphere at a prime $p\neq\ell$, which are constructed using $\ell$th
power isogenies on a supersingular curve at $p$.

\subsection{Acknowledgments.} 

I'd like to thank Kevin Buzzard, who directed my attention to the
appropriate sections in \cite{katz-mazur}, and helpfully criticized my
fumbling 
formulation of an early version of some of this.   I would also like
to thank Nick Kuhn for supplying the observation that led to the
formulation of part (4) of the main theorem. 

The author was supported under NSF grant DMS--1006054.

\section{The proof of the theorem over fields with $p$ invertible}  
\label{sec:case-A}

Let $k$ be an algebraically closed field in which $p$ is invertible,
let $S=\Spec k$, and let $E/S$ be an elliptic curve.  In this section
we show that statements (1) and (2) of \eqref{main-thm} are true for
such a curve.  

In this case, we have that $E[p^\infty]\approx (\Q_p/\Z_p)^2$, and
thus that finite subgroup schemes $G\subset E$ of $p$th power order
correspond to finite subgroups of $(\Q_p/\Z_p)^2$.  The complex
$\cK_{p^r}(E/S)$ thus admits a combinatorial description, which we now
give.

\subsection{The order complex of subgroups of an abelian group}

Let $G$ be an abelian group, and let $P_G$ denote the poset of
proper non-trivial subgroups of $G$.  This poset is associated to an
abstract simplicial complex, called its \dfn{order complex}, which we
also denote $P_G$. This is an abstract simplicial complex whose vertices
correspond to proper non-trivial subgroups of $G$, and whose
$q$-simplices correspond to chains $[0\subsetneq G_1\subsetneq
\cdots\subsetneq 
G_q\subsetneq G]$ of subgroups $G_i$ of $G$.

Note that $P_{\Z/p}$ and $P_{0}$ are empty.

Given a simplicial complex $X$ with some chosen ordering of  its
vertices, let $C_\bullet(X)$ denote the usual chain complex associated
to $X$ with integer coefficients, and let  $\widetilde{C}_\bullet(X)$
denote the mapping fiber of 
the augmentation $C_\bullet(X)\ra \Z$.  Thus $\widetilde{C}_q(X)$ is free abelian
group on the $q$-simplices of $X$ if $q\geq0$, and $\widetilde{C}_{-1}(X)=\Z$.

\begin{prop}\label{prop:solomon-tits-generalization}
Let $G$ be a finite abelian $p$-group, with $G\neq0$.
\begin{enumerate}
\item If $pG\neq 0$, then $H_q(\widetilde{C}_\bullet(P_G))=0$ for all $q$.
\item If $pG=0$, so that $G\approx (\Z/p)^{\times r}$, then
  $H_q(\widetilde{C}_\bullet(P_G))=0$ for $q\neq r-2$, while
  $H_{r-2}\widetilde{C}_\bullet (P_G)$ is a free
  abelian group.
\end{enumerate}
\end{prop}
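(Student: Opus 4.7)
The plan is to prove (1) by exhibiting $\Delta(P_G)$ as conically contractible via the Frattini subgroup, and to prove (2) by induction on $r$, reducing to the classical Solomon--Tits theorem on the Tits building.

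For (1), set $H := pG$. This is nontrivial by hypothesis, and proper in $G$ since $G$ is a nonzero finite $p$-group (so $pG \subsetneq G$). Because $pG$ is precisely the Frattini subgroup of the abelian $p$-group $G$, the subgroup $H$ lies inside every maximal proper subgroup of $G$. Consequently, for every proper nontrivial $K \subseteq G$, the sum $K+H$ is again proper (if $K \subseteq M$ for some maximal $M$, then $K+H \subseteq M+H = M$) and nontrivial (it contains $K$). Thus the three assignments $K \mapsto K$, $K \mapsto K+H$, $K \mapsto H$ are poset endomorphisms of $P_G$ satisfying $K \leq K+H \geq H$ pointwise. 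Since comparable poset maps induce chain-homotopic maps on order complexes, the identity is chain homotopic to the constant map at $H$; the constant map factors through a point and is null on $\widetilde{C}_\bullet$, yielding the required acyclicity.

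For (2), induct on $r$. The base case $r=1$ is immediate: $P_{\Z/p} = \emptyset$, so $\widetilde{C}_\bullet(P_G) = \Z$ concentrated in degree $-1 = r-2$. For the inductive step with $r \geq 2$, fix a line $L \subset G$. The closed star of $L$ in $\Delta(P_G)$ is a cone over its link, and the link $\{K \in P_G : L \subsetneq K\}$ corresponds, via $K \mapsto K/L$, to $\Delta(P_{G/L})$ with $G/L \cong \F_p^{r-1}$. By the induction hypothesis, $\widetilde{H}_*(P_{G/L})$ is free and concentrated in degree $r-3$. A Mayer--Vietoris argument applied to the cover of $\Delta(P_G)$ by the star of $L$ and the full subcomplex on $P_G \setminus \{L\}$, with intersection the link, then builds up the homology of $\Delta(P_G)$ by processing lines one at a time, each contributing a shifted copy of the top homology of a smaller building.

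The main obstacle is controlling this iteration cleanly: after removing the star of one line, the complement still contains all remaining lines, and the connecting maps in the iterated Mayer--Vietoris sequences must be identified. The cleanest remedy is to invoke the classical Solomon--Tits theorem, which asserts that the order complex of the poset of proper nontrivial subspaces of $\F_p^r$ is the (geometric realization of the) Tits building of $GL_r(\F_p)$ and is homotopy equivalent to a wedge of $p^{r(r-1)/2}$ copies of $S^{r-2}$. This immediately yields that $\widetilde{H}_q(P_G)$ vanishes for $q \neq r-2$ and is free abelian in degree $r-2$, as claimed. The Solomon--Tits theorem itself admits a clean proof via shellability of the subspace lattice, or by a sharper form of the inductive argument above that organizes all lines simultaneously.
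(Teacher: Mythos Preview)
Your proof is correct and follows essentially the same approach as the paper: for (1) you use the conical contraction $K \leq K + H \geq H$ with $H = pG$ the Frattini subgroup, whereas the paper uses a cyclic order-$p$ subgroup $V$ that is not a summand (hence $V \subseteq pG$), but the mechanism is identical; for (2) both you and the paper defer to the Solomon--Tits theorem.
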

\begin{proof}
In case (1), 
there exists a proper subgroup $V\subsetneq G$ which is cyclic of
order $p$
and is \emph{not} a summand of $G$.  Thus, given any proper
non-trivial subgroup $H$ of $G$, the subgroup $H+V$ is again proper
and non-trivial.  The chain of inclusions $H\subseteq H+V \supseteq V$ defines a
pair of homotopies between self-maps of the geometric realization
$\len{P_G}$, which 
relate the identity map 
of $\len{P_G}$ to a constant map.  Thus, $\len{P_G}$ is contractible,
and the result on homology follows.

Case (2) is a special case of the theorem of Solomon-Tits
\cite{solomon-steinberg-character}, which says that $\len{P_G}$ is
homotopy equivalent to a wedge (one-point union) of
$(r-2)$-dimensional spheres if 
$r\geq2$.  An elegant proof which 
applies in this 
particular case is given in \cite{quillen-finite-generation}*{\S2}.
\end{proof}
In this paper, we actually only need part (2) of \eqref{prop:solomon-tits-generalization} in the cases
of $r=1$ and $2$, where it is trivial since $P_{(\Z/p)^r}$ is
empty or $0$-dimensional  in these cases.

\subsection{Description of $\cK^\bullet_{p^r}$}

Now we define for each finite abelian $p$-group $G$ and each abelian
group $M$ a cochain complex $D^\bullet_G(M)$ as follows.  If $G\approx
0$, we set $D^0_G(M)\approx M$ and $D^q_G(M)=0$ for $q\neq0$.  If
$G\not\approx 0$, we set
\[
D^q_G(M) = \Hom(\widetilde{C}_{q-2}(P_G), M),
\]
and the coboundary map of $D^\bullet_G(M)$ be induced by the boundary
map of $\widetilde{C}_{\bullet-2}(P_G)$.
We have the following immediate consequence of
\eqref{prop:solomon-tits-generalization}. 
\begin{prop}\label{prop:koszul-when-p-inverted}
Let $G$ be a finite abelian $p$-group.
\begin{enumerate}
\item If $pG\neq0$, then $H^q(D^\bullet_G(M))=0$ for all $q$.
\item If $pG=0$, so that $G\approx (\Z/p)^r$, then
  $H_q(D^\bullet_G(M))=0$ for $q\neq r$.
\end{enumerate}
\end{prop}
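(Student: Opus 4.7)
The plan is to deduce this proposition directly from \eqref{prop:solomon-tits-generalization} by recognizing $D^\bullet_G(M)$ as essentially the $\Hom$-dual of $\widetilde{C}_\bullet(P_G)$ with a degree shift of $2$. The indexing convention $D^q_G(M) = \Hom(\widetilde{C}_{q-2}(P_G), M)$ converts homological degree $q-2$ into cohomological degree $q$, and the complex $\widetilde{C}_\bullet(P_G)$ is a bounded-below complex of \emph{free} abelian groups (each $\widetilde{C}_i(P_G)$ for $i\geq 0$ is free on the set of $i$-simplices of $P_G$, and $\widetilde{C}_{-1}(P_G)=\Z$). For such a complex, the chain-homotopy type is determined by its homology, and the contravariant functor $\Hom(-,M)$ preserves chain homotopies.

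In case (1), part (1) of \eqref{prop:solomon-tits-generalization} tells us that $\widetilde{C}_\bullet(P_G)$ is acyclic; as a bounded-below complex of free abelian groups it is then chain-contractible, so $D^\bullet_G(M)$ is chain-contractible as well, which gives $H^q(D^\bullet_G(M))=0$ for all $q$. In case (2), part (2) of \eqref{prop:solomon-tits-generalization} gives that the homology of $\widetilde{C}_\bullet(P_G)$ is free abelian and concentrated in degree $r-2$. Using that $\Z$ is a PID (so that subgroups and quotients of free abelian groups are free), I would argue that such a complex is chain-homotopy equivalent to its homology placed in degree $r-2$; applying $\Hom(-,M)$ and accounting for the shift $q\mapsto q-2$ then concentrates the only possibly nonzero cohomology of $D^\bullet_G(M)$ in degree $(r-2)+2=r$. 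The edge case $r=0$ (so $G\approx 0$) is handled by the definitional convention $D^0_G(M)=M$, $D^q_G(M)=0$ for $q\neq 0$, and the small cases $r=1,2$ can be verified by direct inspection of the trivial posets $P_{\Z/p}$ and $P_{(\Z/p)^2}$.

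There is no substantive obstacle, since all the mathematical content has already been packaged into \eqref{prop:solomon-tits-generalization}; this proposition is merely its $\Hom$-dual reformulation. The only point that needs genuine care is the bookkeeping of the degree shift $q\mapsto q-2$ and the observation that free abelian groups are $\Z$-projective, so that a bounded-below complex of them with free homology in a single degree splits, up to chain homotopy, into that homology plus a contractible summand.
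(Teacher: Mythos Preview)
Your proposal is correct and takes essentially the same approach as the paper: the paper simply declares the proposition to be an immediate consequence of \eqref{prop:solomon-tits-generalization}, and your argument (dualizing a bounded-below complex of free abelian groups whose homology is free and concentrated in a single degree, with the degree shift $q\mapsto q-2$) is exactly the content one would supply to justify that word ``immediate.''
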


Now we consider our elliptic curve $E$ over an algebraically closed field $k$
in which $p$ is invertible.  
\begin{prop}\label{prop:description-of-complex-when-p-inverted}
Let $E/\Spec(k)$ be an elliptic curve, where $k$ is an algebraically
closed field not of characteristic $p$.  Then
$\cK_{p^r}^\bullet(E/S)\approx \prod_G D^\bullet_{G}(k)$ as cochain
complexes, where the product runs over all subgroups $G$ of
$E[p^\infty]\approx (\Q_p/\Z_p)^{\times 2}$ of order $p^r$.
\end{prop}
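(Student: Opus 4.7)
The plan is to construct the claimed isomorphism of cochain complexes by unraveling definitions on each side and then matching the coboundaries. Since $k$ is algebraically closed and $p$ is invertible in $k$, every finite locally-free $S$-subgroup scheme of $E$ of $p$-power order is étale over $S = \Spec k$, and hence reduces to its group of $k$-points. Consequently, $\Isog{N_1,\dots,N_q}(E/S)$ is a finite discrete set naturally in bijection with the set of chains $G_1\subsetneq\cdots\subsetneq G_q$ of actual subgroups of $E[p^\infty]\approx(\Q_p/\Z_p)^{\times 2}$ with $\lvert G_i/G_{i-1}\rvert=N_i$ (where $G_0=0$). Thus $\cS_{N_1,\dots,N_q}(E/S)$ is the algebra of $k$-valued functions on this finite set, and summing over tuples gives
\[
\cK^q_{p^r}(E/S) \;=\; \prod_{\underline{G}} k,
\]
where $\underline{G}=(G_1\subsetneq\cdots\subsetneq G_q)$ ranges over all chains of non-trivial subgroups of $E[p^\infty]$ with $\lvert G_q\rvert=p^r$.

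Next I would partition this product according to the top subgroup $G:=G_q$. For a fixed subgroup $G\subset E[p^\infty]$ of order $p^r$, the chains counted above whose maximum equals $G$ are precisely the data of a $(q-2)$-simplex $[G_1,\dots,G_{q-1}]$ of the order complex $P_G$: indeed $G_1,\dots,G_{q-1}$ are proper non-trivial subgroups of $G$ with strict inclusions. Low-dimensional edges match the augmentation convention: for $q=1$ the only chain is $(G)$ itself, which corresponds to the generator of $\widetilde{C}_{-1}(P_G)=\Z$, giving $D^1_G(k)=k$; for $q=0$ (and $r\geq 1$) both sides are zero; and for $q>r$ no tuple $(N_1,\dots,N_q)$ of integers $>1$ has product $p^r$, matching the vanishing of $\widetilde{C}_{q-2}(P_G)$ beyond $\dim P_G=r-2$. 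This regrouping yields a canonical isomorphism of graded $k$-modules
\[
\cK^q_{p^r}(E/S) \;\xrightarrow{\;\sim\;}\; \prod_{\lvert G\rvert=p^r} \Hom\bigl(\widetilde{C}_{q-2}(P_G),\,k\bigr) \;=\; \prod_{\lvert G\rvert=p^r} D^q_G(k).
\]

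The remaining task is to verify that this identification is a map of cochain complexes. The crucial observation is that the forgetting operators $u_i$ appearing in $\delta$ are indexed by $i=1,\dots,q-1$, so each $u_i$ deletes one of the groups $G_1,\dots,G_{q-1}$ and preserves the top group $G_q$. Hence the coboundary respects the decomposition by $G$. Dually, under the bijection between chains ending at $G$ and simplices of $P_G$, deleting the group $G_i$ from $\underline{G}$ corresponds to the face operator removing the $(i-1)$st vertex from $[G_1,\dots,G_{q-1}]$, which in the boundary of $\widetilde{C}_{\bullet-2}(P_G)$ carries the sign $(-1)^{i-1}$. Comparing with the sign $(-1)^i$ in the formula $(\delta f)_{\underline{G}}=\sum_{i=1}^{q-1}(-1)^i u_i(f_{\underline{G}\setminus\{G_i\}})$, the two coboundaries agree up to the overall sign $(-1)^q$ in cochain degree $q$; this global sign can be absorbed by the standard automorphism of a cochain complex that negates odd degrees.

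The main obstacle here is purely bookkeeping: aligning the indexing conventions (the simplicial boundary indexing $j=0,\dots,n$ versus the forgetting index $i=1,\dots,q-1$) and handling the boundary cases $q=0,1$ where the augmentation $\widetilde{C}_{-1}(P_G)=\Z$ intervenes. No substantive geometric input is required beyond the étaleness of $p$-power subgroup schemes in characteristic prime to $p$. Once the signs and edge cases are checked, the maps above assemble into the natural isomorphism $\cK^\bullet_{p^r}(E/S)\cong\prod_G D^\bullet_G(k)$ of cochain complexes, as claimed.
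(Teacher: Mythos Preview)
Your proof is correct and follows the same route as the paper, which dispatches the result in one sentence as ``an explicit combinatorial identification'' using $\cS_{p^{r_1},\dots,p^{r_q}}(E/S)\approx \prod k$ indexed by chains in $(\Q_p/\Z_p)^2$. You have simply spelled out in full the étaleness argument, the partition by the top group $G_q$, the matching with simplices of $P_G$, and the sign bookkeeping that the paper leaves implicit.
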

\begin{proof}
This is an explicit combinatorial identification, using the
isomorphism of rings $\cS_{p^{r_1},\dots,p^{r_q}}(E/S)\approx \prod
k$, where the product ranges over chains $0\subsetneq G_1\subsetneq
\cdots \subsetneq G_q\subseteq E[p^\infty]\approx (\Q_p/\Z_p)^2$ with
$\len{G_q}=p^r$.  
\end{proof}

\section{Supersingular curves, deformations, and isogenies}
\label{sec:ss-formulas}

As noted in \S\ref{subsec:reduction-to-cases}, the proof of the main
theorem reduces mainly to the special case of a supersingular curve $E_0/k$ over
a finite field.  We
will derive this  case from the case of  the universal deformation
$E_\univ/k\powser{x}$ of
$E_0/k$ to rings of characterstic $p$, which we handle by giving a
description of the complexes
$\cK^\bullet_{p^r}(E_\univ/k\powser{x})$ using explicit
formulas.  Although one could  consider deformations to rings
where $p$ is only assumed to be topological nilpotent, it is not
necessary to do so 
to prove the result we need; in any case I don't know how to construct 
explicit integral formulas.

In what follows, $E_0/k$ will be a fixed supersingular elliptic curve over a
perfect field $k$ of characteristic $p$.  

For any ring $R$ of characteristic $p$, we
write $\sigma=\sigma_R\colon R\ra R$ for the $p$th power ring
endomorphism $\sigma(r)=r^p$.  For an elliptic curve $E/\Spec R$, we
write $E^{(p^r)}=(\sigma^r)^*E$.  For an element $f(x)=\sum c_i x^i$
in a power series ring $R\powser{x}$, we will write
\[
f^{(p^r)} = \sum c_i^{p^r}x^i \in R\powser{x}.
\]

\subsection{The category of deformations}

Let $R$ be a local ring of characteristic $p$, and write
$k_R=R/\mathfrak{m}$.  
A \dfn{deformation} of 
$E_0$ to $R$ is data $(E,\psi,\alpha)$, where $E$ is an elliptic curve over
$\Spec R$, $\psi\colon k\ra k_R$ is a map of fields, and 
and $\alpha\colon E\otimes k_R\ra \psi^*E_0$ is an isomorphism of elliptic
curves over $\Spec k_R$. 

Let $(E_1,\psi_1,\alpha_1)$ and $(E_2,\psi_2, \alpha_2)$ be two
deformations of $E_0$ 
to $R$.   A \dfn{deformation of $F^r$} is an isogeny $f\colon E_1\ra
E_2$ of elliptic curves over $\Spec R$, such that
$\psi_2=\psi_1\circ \sigma^r$, and 
the square
\[\xymatrix{
{E_1\otimes k_R} \ar[r]^{f\otimes k_R} \ar[d]_{\alpha_1}
& {E_2\otimes k_R} \ar[d]^{\alpha_2}
\\
{\psi_1^*E_0} \ar[r]_{F^r}
& {\psi_2^*E_0}
}\]
commutes, where $F^r$ denotes the $p^r$-power relative Frobenius isogeny
$F^r\colon \psi_1^*E_0\ra \psi_1^*E_0^{(p^r)} = \psi_2^*E_0$.

A deformation of $F^r$ is necessarily a $p^r$-isogeny.
If $r=0$, we say that $f$ is an \dfn{isomorphism} between deformations.

The collection of all deformations of $E_0$ to $R$, and all
deformations of $F^r$ for $r\geq0$ between such, forms a category,
denoted $\Def(R)=\Def_{E_0/k}(R)$. 

\begin{exam}
Let $(E,\psi,\alpha)$ be a deformation of $E_0$ to $R$.  Then the
Frobenius isogeny $F^a\colon E\ra E^{(p^a)}$ is \emph{tautologically} a 
deformation of $F^a$; it gives a morphism $(E,\psi,\alpha)\ra
(E^{(p^a)},\psi\circ \sigma^a, \alpha^{(p^a)})$ in $\Def(R)$.
\end{exam}

Any isogeny between deformations of $E_0$ factors uniquely through some
deformation of $F^r$, and so any finite subgroup scheme of rank $p^r$ of a
deformation of $E_0$ is the kernel of an essentially unique
deformation of $F^r$.
\begin{prop}
Let $(E,\psi,\alpha)$ be a deformation of $E_0/k$ to $R$, and let
$G\subset E$ be a subgroup scheme finite and locally free over $\Spec R$ of
rank $p^r$.  Then there exists an isogeny $f\colon
(E,\psi,\alpha)\ra (E',\psi',\alpha')$ which is a deformation of $F^r$
and is such that $\Ker f=G$.
Given two such isogenies $f_i\colon (E,\psi,\alpha)\ra
(E_i',\psi_i',\alpha_i')$ for $i=1,2$, there exists a unique
isomorphism of deformations $g\colon (E_1,\psi_1,\alpha_1)\ra
(E_2,\psi_2,\alpha_2)$ such that $gf_1=f_2$.
\end{prop}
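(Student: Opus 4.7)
The plan is to construct $(E', \psi', \alpha')$ by forming the fppf quotient $E' := E/G$ with its quotient isogeny $f : E \to E'$ (an isogeny of rank $p^r$ with $\Ker f = G$), setting $\psi' := \psi \circ \sigma^r$, and producing $\alpha'$ by transporting the reduction of $f$ through $\alpha$. For the uniqueness half, I would use the universal property of $E \twoheadrightarrow E/G$ to produce a unique candidate isomorphism $g : E'_1 \to E'_2$ with $g f_1 = f_2$ and then verify that $g$ respects the extra deformation data.

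The only substantive step is the construction of $\alpha'$. Here I would invoke the key input from \cite{katz-mazur}*{Ch.~13}: the $p$-divisible group of the supersingular curve $E_0$, a one-dimensional formal group of height two, has a unique finite locally free subgroup scheme of each rank $p^s$, namely the Frobenius kernel $\Ker F^s$. Base changing along $\psi : k \to k_R$, the rank-$p^r$ subgroup $\alpha(G \otimes k_R) \subset \psi^* E_0$ must therefore coincide with $\Ker F^r_{\psi^* E_0}$. The quotient $(\psi^* E_0)/\alpha(G \otimes k_R)$ is thus canonically identified with $(\psi^* E_0)^{(p^r)} = \psi'^* E_0$, with the projection equal to the relative Frobenius $F^r$. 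Combining this identification with $\alpha$ yields $\alpha' : E' \otimes k_R \to \psi'^* E_0$ satisfying $\alpha' \circ (f \otimes k_R) = F^r \circ \alpha$, so $f$ becomes a deformation of $F^r$.

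For uniqueness, given two such $f_i : (E, \psi, \alpha) \to (E'_i, \psi'_i, \alpha'_i)$ with $\Ker f_i = G$, the universal property of the fppf quotient $E \twoheadrightarrow E/G$ produces a unique isomorphism $g : E'_1 \to E'_2$ of elliptic curves over $R$ with $g f_1 = f_2$. Then $\psi'_1 = \psi \circ \sigma^r = \psi'_2$ is forced by the hypothesis that each $f_i$ is a deformation of $F^r$, and precomposing both defining relations $\alpha'_i \circ (f_i \otimes k_R) = F^r \circ \alpha$ with the epimorphism $f_1 \otimes k_R$ gives $\alpha'_2 \circ (g \otimes k_R) = \alpha'_1$. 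Hence $g$ is an isomorphism of deformations.

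The main obstacle is the structural fact about subgroups of a height-two formal group invoked in the second paragraph; everything else is a routine manipulation with quotient isogenies and Frobenius. I would not re-prove that fact here, as the paper has already committed to pulling it from \cite{katz-mazur}*{Ch.~13}.
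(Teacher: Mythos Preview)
Your proposal is correct and follows essentially the same route as the paper: form the quotient $E'=E/G$, observe that over the residue field the reduction $G\otimes k_R$ is the unique rank-$p^r$ subgroup and hence the kernel of $F^r$, deduce the existence of $\alpha'$ making the square commute, and handle uniqueness via the universal property of the quotient isogeny. Your write-up is in fact somewhat more explicit than the paper's (which declares the second statement ``straightforward''), but the argument is the same.
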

\begin{proof}
Given $G\subset E$, let $E'=E/G$ be the quotient curve, defined over
$\Spec R$.  Passing to $k$, we see that $G_0=G\otimes k$
is the unique subgroup scheme of rank $p^r$, and thus is the kernel of
$F^r$.  Thus 
there is a unique isomorphism
$\alpha'$ making the diagram
\[\xymatrix{
{E\otimes \bar{k}} \ar[r] \ar[d]_{\alpha}
& {(E/G)\otimes \bar{k}} \ar@{.>}[d]^{\alpha'}
\\
{\psi^*E_0} \ar[r]_{F^r}
& {{\psi'}^*E_0}
}\]
making the diagram commute, where $\psi'=\psi\circ \sigma^r$.

The second statement of the proposition is straightforward.
\end{proof}

There is at most one deformation of $F^r$ (for given $r$) between any
two deformations.
\begin{prop}\label{prop:def-of-frob-is-unique}
Let $R$ be an artinian local ring of characteristic $p$.  If
$f,f'\colon (E_1,\alpha_1,\psi_1)\ra (E_2,\alpha_2,\psi_2)$ are
deformations of $F^r$ in $\Def_{E_0/k}(R)$, then $f=f'$.
\end{prop}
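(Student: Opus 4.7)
The plan is to reduce the statement to a rigidity claim for homomorphisms of formal groups over $R$, and then to verify that rigidity by induction on the nilpotency order of the maximal ideal $\mathfrak{m}\subset R$. Set $g=f-f'\colon E_1\to E_2$, using the group law on $E_2$; this is again a homomorphism of group schemes over $R$, and the compatibility of $f,f'$ with $\alpha_1,\alpha_2$ forces $g\otimes_R k_R=0$, so $g\equiv 0\pmod{\mathfrak{m}}$. The first step is to observe that it suffices to show that the restriction $\hat{g}\colon \hat{E}_1\to\hat{E}_2$ to the formal completions at the identity sections vanishes: if $\hat g=0$, then $\Ker(g)$ is a closed subgroup scheme of $E_1$ whose formal completion at the identity is all of $\hat{E}_1$, and by faithful flatness of the completion $\O_{E_1,e_1}\to\hat\O_{E_1,e_1}$ this forces $\Ker(g)$ to contain a Zariski-open neighborhood of the identity; a standard argument then shows that a closed subgroup scheme of the connected elliptic scheme $E_1$ containing such a neighborhood must equal $E_1$, giving $g=0$.

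It remains to prove the rigidity claim: a homomorphism $\hat{g}\colon \hat{E}_1\to\hat{E}_2$ of formal groups over the artinian local ring $R$ of characteristic $p$, whose reduction modulo $\mathfrak{m}$ vanishes and whose reductions to $k_R$ have finite height (certainly so here, where they are of height $2$), must itself vanish. I would argue by induction on $k\geq 1$ that $\hat{g}\equiv 0\pmod{\mathfrak{m}^k}$; the base case $k=1$ is given, and the induction terminates when $\mathfrak{m}^{k+1}=0$. For the inductive step, write $\hat{g}(T)=\sum a_iT^i$ with $a_i\in\mathfrak{m}^k$ and reduce the homomorphism identity $\hat{g}(X+_{\hat{E}_1}Y)=\hat{g}(X)+_{\hat{E}_2}\hat{g}(Y)$ modulo $\mathfrak{m}^{k+1}$: products of the $a_i$ lie in $\mathfrak{m}^{2k}\subseteq\mathfrak{m}^{k+1}$ for $k\geq 1$, so only the linear part of the right-hand formal group law survives, while on the left the formal group law can be reduced modulo $\mathfrak{m}$. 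Choosing a $k_R$-basis of $\mathfrak{m}^k/\mathfrak{m}^{k+1}$, this identifies the class $\bar g$ of $\hat g$ in $\mathfrak{m}^k/\mathfrak{m}^{k+1}$ with a tuple of formal group homomorphisms $\hat{E}_1\otimes k_R\to\hat{\mathbb{G}}_a$ over $k_R$.

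To finish the induction I would show that any such homomorphism is zero: if $\phi\colon G_0\to\hat{\mathbb{G}}_a$ is a nonzero formal group homomorphism with $G_0$ of finite height $h$ over a field of characteristic $p$, then $\phi\circ [p]_{G_0}=[p]_{\hat{\mathbb{G}}_a}\circ\phi=0$, yet $[p]_{G_0}(T)$ has leading term $uT^{p^h}$ with $u\neq 0$, so composition with a nonzero $\phi$ produces a nonzero power series---a contradiction. Hence $\bar g=0$, i.e.\ $\hat g\equiv 0\pmod{\mathfrak{m}^{k+1}}$, closing the induction. The main obstacle is the bookkeeping in the inductive step: verifying that products of coefficients really do drop out and that the resulting $\bar g$ is a bona fide formal group homomorphism into $\hat{\mathbb{G}}_a$; once that identification is justified, the remaining ingredients---faithful flatness of completion, connectedness of $E_1$, and finite-height rigidity against $\hat{\mathbb{G}}_a$---are formal.
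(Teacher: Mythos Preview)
Your argument is correct and follows the same outline as the paper: form $g=f-f'$, observe $g\otimes_R k_R=0$, and conclude $g=0$ by rigidity. The paper does this in two lines, simply citing \cite{katz-mazur}*{2.4.1} for the rigidity step. You instead supply a self-contained proof of rigidity: pass to formal completions and show $\hat g=0$ by a square-zero induction on $\mathfrak m^k$, the key input being that a finite-height formal group over a field admits no nonzero homomorphism to $\hat{\mathbb G}_a$. What you gain is independence from the reference; what you pay is the extra reduction step $\hat g=0\Rightarrow g=0$, which (though standard) needs some care over a non-reduced base---your ``closed subgroup containing an open neighborhood'' argument works, but note one could also bypass the passage to formal groups entirely and run the same square-zero induction directly on the elliptic schemes, where the analogous vanishing $\Hom(E_1\otimes k_R,\mathbb G_a)=0$ is immediate from properness. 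Both routes rest on the same phenomenon.
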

\begin{proof}
Because $f$ and $f'$ are deformations of $F^r$, we have that $f\otimes
k_R=f'\otimes k_R\colon E_1\otimes k_R\ra E_2\otimes k_r$.  Thus
$(f-f')\otimes k_R$ is the $0$-homomorphism, whence $f-f'$ is the
$0$-homomorphism by ``rigidity'' \cite{katz-mazur}*{2.4.1}.
\end{proof}

\subsection{Universal deformation}

\begin{prop}
There is at most one isomorphism between any two deformations of $E_0$
to $R$.   There
is a universal deformation $E_\univ$ defined over $A\approx k\powser{x}$,
with the property that isomorphism classes of deformations of $E_0$ to
an artinian local ring
$R$ of characteristic $p$ are in bijective correspondence with local
homomorphisms of rings 
$A\ra R$.
\end{prop}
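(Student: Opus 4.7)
The plan has two parts, corresponding to the two assertions. For uniqueness of isomorphisms, I would note that an isomorphism of deformations is by definition a deformation of $F^0=\id$; hence if $f,f'\colon (E_1,\psi_1,\alpha_1)\ra (E_2,\psi_2,\alpha_2)$ are two such isomorphisms, then \eqref{prop:def-of-frob-is-unique} applied with $r=0$ forces $f=f'$ when $R$ is artinian. For a general local $R$ of characteristic $p$ with maximal ideal $\mathfrak{m}_R$, the same conclusion follows by applying the artinian case to each quotient $R/\mathfrak{m}_R^n$ and invoking $\mathfrak{m}_R$-adic separation of $\Hom$ between the underlying flat projective schemes.

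For existence, the plan is to combine two standard deformation-theoretic inputs. First, Serre--Tate: on artinian local $\F_p$-algebras, the functor $(E,\psi,\alpha)\mapsto (\widehat{E},\psi,\widehat{\alpha})$ sending a deformation of the supersingular elliptic curve $E_0$ to the induced deformation of its formal completion $\widehat{E_0}$ is an equivalence of deformation categories, the essential point being that formal deformations of $E_0$ are automatically algebraizable. Second, since $E_0$ is supersingular, $\widehat{E_0}$ is a one-dimensional connected formal group of height $2$ over the perfect field $k$, and Lubin--Tate theory supplies a universal deformation over the two-dimensional regular local ring $\mathbb{W}k\powser{u}$. Base-changing along $\mathbb{W}k\twoheadrightarrow k$ restricts to deformations in characteristic $p$ and produces a universal deformation $E_\univ$ over $A=k\powser{x}$.

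The final bookkeeping step is to match local homomorphisms $A\ra R$ with isomorphism classes of deformations. A local homomorphism $k\powser{x}\ra R$ is uniquely determined by the data of a field map $\psi\colon k\ra k_R$ (any ring map from the field $k$ to a local ring factors through the residue field, since the image of $k^{\times}$ consists of units) together with an element $t\in\mathfrak{m}_R$ (the image of $x$); pulling back the universal deformation along this classifying data produces an arbitrary deformation, and uniqueness of isomorphisms from the first part shows that two local homomorphisms with isomorphic pullbacks must coincide.

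The main obstacle is the black-box Serre--Tate and Lubin--Tate input, which I would cite rather than reprove. Granting these, the only genuinely delicate point is to verify that tracking the extra datum $\psi$ in the category $\Def(R)$ does not enlarge the universal ring beyond the single Lubin--Tate coordinate; this works precisely because $k$ is perfect, so $\psi$ is forced by its induced map on residue fields and requires no additional parameters.
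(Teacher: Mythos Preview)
Your proposal is correct and follows essentially the same route as the paper: the paper's proof is a two-line citation of Serre--Tate (to reduce to the formal group) and Lubin--Tate (to classify deformations of the height-$2$ formal group), which is exactly the core of your argument. You supply more detail than the paper does---in particular, you explicitly derive the uniqueness of isomorphisms from \eqref{prop:def-of-frob-is-unique} and spell out the bookkeeping for the datum $\psi$---but this is elaboration of the same strategy rather than a different one.
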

\begin{proof}
This is a standard result of deformation theory.  The Serre-Tate
theorem says that deformations of $E_0$ are the same as deformations
of its underlying formal group $\widehat{E}_0$, which is a formal
group of height $2$, and the deformations of such formal groups are
classified by a theorem of Lubin-Tate 
\end{proof}

Thus, the isomorphism class of a deformation $(E,\alpha,\psi)$ of
$E_0$ to $R$ corresponds,  
to a unique local ring homomorphism
$\phi_{(E,\alpha,\psi)}\colon 
A\ra R$.  If we make a choice of generator $x\in A$, so that $A\approx
k\powser{x}$, then we can speak of the \dfn{deformation parameter}
$x(E,\alpha,\psi)\defeq \phi_{(E,\alpha,\psi)}(x)\in \mathfrak{m}_R$.  
Thus,  deformations of $(E,\alpha,\psi)$ to an artinian local ring
$R$ correspond up to
isomorphism to pairs
$(\psi,a)$ consisting of a 
ring homomorphism $\psi\colon k\ra k_R$ and an element $a\in
\mathfrak{m}_R$, where $a=x(E,\alpha,\psi)$.

If two deformations are related by a Frobenius isogeny, their
deformation parameters are related in an obvious way.
\begin{prop}
Let $R$ be an artinian local ring of characteristic $p$.
Let $F^a\colon (E,\alpha,\psi)\ra
(E^{(p^a)},\alpha^{(p^a)},\psi\circ \sigma^a)$ denote the $p^a$-power
Frobenius viewed as a morphism between deformations of $E_0/k$ to
$R$.  Then we have that
\[
\phi_{(E^{(p^a)},\alpha^{(p^a)},\psi\circ \sigma^a)} =
\phi_{(E,\alpha,\psi)}\circ \sigma^a\colon A\ra R.
\]
\end{prop}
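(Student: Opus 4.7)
The plan is to reduce everything to the universal property of $(E_\univ/A, \alpha_\univ, \mathrm{id}_k)$ and to exploit the fact that any ring homomorphism between rings of characteristic $p$ commutes with Frobenius. Write $f = \phi_{(E,\alpha,\psi)}\colon A\ra R$. By the universal property, there is a canonical isomorphism of deformations $E \cong E_\univ\otimes_A R$, where $R$ is viewed as an $A$-algebra via $f$, and this isomorphism carries $(\alpha,\psi)$ to the base change of the universal rigidification.

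The key step is to identify the classifying map of the Frobenius twist. By the definition $E^{(p^a)}=(\sigma_R^a)^*E$, the twist is obtained by a further base change along $\sigma_R^a\colon R\ra R$, so composing we get $E^{(p^a)} \cong E_\univ\otimes_A R$ where $R$ is now regarded as an $A$-algebra via $\sigma_R^a\circ f$. Because $f$ is a ring homomorphism and both $A$ and $R$ have characteristic $p$, the identity $\sigma_R\circ f = f\circ \sigma_A$ holds, and iterating gives $\sigma_R^a\circ f = f\circ \sigma_A^a$. Thus the underlying elliptic curve of $E^{(p^a)}$ is precisely the base change of $E_\univ$ along the composite $f\circ \sigma^a\colon A\ra R$.

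Next I would verify that the rigidification data carried by $(E^{(p^a)},\alpha^{(p^a)},\psi\circ\sigma^a)$ is the one produced by the base change along $f\circ\sigma^a$. For the residue-field part, the reduction of $f\circ\sigma^a$ modulo $\mathfrak{m}_R$ is exactly $\psi\circ\sigma^a_k$, as required. For the isomorphism part, the naturality of the relative Frobenius identifies the twist of $\alpha\colon E\otimes k_R\ra\psi^*E_0$ with the map $\alpha^{(p^a)}\colon E^{(p^a)}\otimes k_R\ra(\psi\circ\sigma^a)^*E_0$, and a quick check shows this is the image of $\alpha_\univ$ under the base change $f\circ\sigma^a$. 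Uniqueness of the classifying map (part of the universal property of $E_\univ$) then forces $\phi_{(E^{(p^a)},\alpha^{(p^a)},\psi\circ\sigma^a)}=f\circ\sigma^a$, which is the claim.

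There is no real obstacle here; the statement is essentially the naturality of Frobenius with respect to base change in characteristic $p$. The only mildly delicate point is the bookkeeping that verifies the twisted rigidification $(\alpha^{(p^a)},\psi\circ\sigma^a)$ matches the one that the universal property would produce from the map $f\circ\sigma^a$, and this follows from unwinding the definitions.
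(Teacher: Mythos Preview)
Your argument is correct and is precisely the unpacking of what the paper means by its one-word proof ``Immediate.'' You have made explicit the two points that make the statement tautological: that base change commutes with Frobenius twist (so $E^{(p^a)}$ is classified by $\sigma_R^a\circ f=f\circ\sigma_A^a$), and that the rigidification data transforms compatibly; there is no further content.
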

\begin{proof}
Immediate.
\end{proof}

\subsection{Standard supersingular curves over $\F_{p^2}$}
\label{subsec:standard-ss}

We'll say that a supersingular elliptic curve $E_0/k$ is
\dfn{standard} if 
$k=\F_{p^2}$ and $F^2=[-p]$.  Thus, to  prove case (C) of
\S\ref{subsec:reduction-to-cases}, it will suffice to prove it in the
case of \emph{standard} 
supersingular curves, by means of the following.

\begin{prop}
Every supersingular curve over a field containing $\F_{p^2}$ is
isomorphic to some standard curve 
$E_0/\F_{p^2}$.
\end{prop}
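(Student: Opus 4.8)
Proof proposal for the final statement (every supersingular curve over a field containing $\F_{p^2}$ is isomorphic to a standard curve $E_0/\F_{p^2}$):

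The plan is to reduce to a well-known fact about supersingular curves over $\F_{p^2}$, namely that they can always be defined over $\F_{p^2}$ and that the relative Frobenius $F^2$ acts as multiplication by $\pm p$. First I would recall that a supersingular elliptic curve over any field $k\supseteq \F_{p^2}$ has $j$-invariant in $\F_{p^2}$, so after a base change it descends to a curve over $\F_{p^2}$; since the question of being isomorphic to a standard curve is geometric, we may assume from the outset that $k=\F_{p^2}$. Next, for a supersingular curve $E_0/\F_{p^2}$, the $p^2$-power relative Frobenius $F^2\colon E_0 \to E_0^{(p^2)} = E_0$ (the last identification because $\sigma^2 = \id$ on $\F_{p^2}$) is an endomorphism of degree $p^2$, purely inseparable, hence lies in $\Z + \Z F^2 \subset \End(E_0)$ and satisfies $F^2 \cdot \widehat{F^2} = [p^2]$. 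One shows $F^2 = [\pm p]$: since $F^2$ is purely inseparable of degree $p^2$, its dual $\widehat{F^2}$ is also purely inseparable (the curve is supersingular, so $E_0[p]$ is connected), forcing $\widehat{F^2} = F^2$ up to sign via the uniqueness of inseparable isogenies of a given degree, and then $(F^2)^2 = \pm[p^2]$ gives $F^2 = [\pm p]$ after checking the sign in the trace.

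The remaining point is to arrange $F^2 = [-p]$ rather than $[+p]$. Here I would invoke the classification of supersingular curves over $\F_{p^2}$ by their Frobenius: the two cases $F^2=[p]$ and $F^2=[-p]$ correspond to the two possible quadratic twists, and every supersingular $E_0/\F_{p^2}$ with $F^2 = [p]$ becomes, after the quadratic twist by the nontrivial element of $\F_{p^2}^\times/(\F_{p^2}^\times)^2$, a curve with $F^2 = [-p]$ — this twist is itself defined over $\F_{p^2}$. (Concretely, for $p>3$ a Weierstrass model $y^2 = x^3 + ax + b$ twisted by a nonsquare $d$ is $y^2 = x^3 + ad^2 x + bd^3$, and Frobenius gets multiplied by the quadratic character $d^{(p^2-1)/2} = -1$.) Since the twisted curve is isomorphic to the original one over $\overline{\F_p}$, and hence over $k$, we obtain the claim.

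The main obstacle is the careful bookkeeping of signs: checking that the purely inseparable degree-$p^2$ endomorphism $F^2$ really equals $[\pm p]$ and not merely squares to $[\pm p^2]$, and then verifying that the quadratic twist genuinely flips the sign. Both are standard — the first follows from the structure of $\End(E_0)$ as an order in a quaternion algebra together with uniqueness of Frobenius up to automorphism, the second from the behavior of the Frobenius trace under twisting — but they need to be stated precisely. Alternatively, one can avoid some of this by citing the explicit known models: for instance, for $p\equiv 3 \pmod 4$ the curve $y^2 = x^3 - x$ over $\F_{p^2}$ is supersingular with the required property after an appropriate twist, and similar explicit families cover the other congruence classes; but the twist argument above is cleaner and uniform in $p$.
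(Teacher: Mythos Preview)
The paper does not give an argument here; it simply cites Lemma~3.21 of Baker--Gonz\'alez-Jim\'enez--Gonz\'alez--Poonen and a MathOverflow discussion.  So your attempt is not so much a different route as an attempt to supply a proof where the paper only gives a reference.  The outline is reasonable, but there is a genuine gap.

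Your step~2 claims that for \emph{any} supersingular $E_0/\F_{p^2}$ one has $F^2=[\pm p]$, arguing that $\widehat{F^2}$ is purely inseparable of degree $p^2$ and hence equals $F^2$ ``up to sign''.  This only says $\widehat{F^2}=u\circ F^2$ for some $u\in\Aut(E_0)$, and $\Aut(E_0)$ is $\{\pm1\}$ only when $j(E_0)\neq 0,1728$.  When $j=1728$ is supersingular (i.e.\ $p\equiv 3\pmod 4$) one has $\Aut(E_0)\cong\Z/4$, and among the four $\F_{p^2}$-twists there are models whose $p^2$-Frobenius has trace $0$, so $(F^2)^2=[-p^2]$ and $F^2\notin\Z$.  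Likewise when $j=0$ is supersingular ($p\equiv 2\pmod 3$) there are sextic twists with trace $\pm p$.  In these cases your argument ``$(F^2)^2=\pm[p^2]$ gives $F^2=[\pm p]$ after checking the sign in the trace'' does not go through: the minus sign genuinely occurs and does not yield $F^2\in\Z$.

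Consequently step~3 (quadratic twist to flip the sign) is not enough: a quadratic twist only negates the trace, so it cannot move you from a trace-$0$ or trace-$\pm p$ model to one with $\pi=-p$.  What the cited lemma actually proves is that among \emph{all} twists (quartic for $j=1728$, sextic for $j=0$) there is one with $\pi=-p$; establishing this requires more than the sign flip you describe.  Your argument is essentially correct for $j\neq 0,1728$, but the special $j$-invariants are exactly where the work lies.
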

\begin{proof}
That all supersingular curves have models over $\F_{p^2}$ is well
known.  The statement about the $p^2$-power Frobenius is proved in
\cite{baker-gonzalez-poonen-finiteness-results}*{Lemma 3.21}.  See
also the discussion \cite{mathoverflow-ss-elliptic-curves}.
\end{proof} 

Given an elliptic curve $E$ over a ring $R$ of characteristic $p$, we
write $V^b=V^b_E\colon E^{(p^b)}\ra E$ for the \dfn{$p^b$-power
  Verschiebung 
  isogeny}, defined as the dual of the $p^b$-power Frobenius
$F^b_E\colon E\ra E^{(p^b)}$.  We write $(-V)^b=(-V_E)^b\colon
E^{(p^b)}\ra E$ 
for the composite $V^b_E\circ [(-1)^b]_{E^{(p^b)}}\colon E^{(p^b)}\ra
E$.  Our interest in 
standard supersingular curves comes from the following.
\begin{prop}\label{prop:verschiebung-is-def-frob}
Given a deformation $(E,\alpha,\psi)$ of a standard supersingular
curve $E_0/\F_{p^2}$ to $R$, the isogeny $(-V_E)^b$ is a deformation of
$F^b$.  That is, 
\[
(-V_E)^b\colon (E^{(p^b)}, \alpha^{(p^b)}, \psi\circ \sigma^b)\ra
(E,\alpha,\psi)
\]
is a morphism in $\Def(R)$.
In this case we have that
\[
\phi_{(E^{(p^b)},\alpha^{(p^b)},\psi\circ \sigma^b)} =
\phi_{(E,\alpha,\psi)}\circ \sigma^b\colon A\ra R.
\]
\end{prop}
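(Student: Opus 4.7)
The plan is to reduce the proposition to a single algebraic identity between isogenies on $E_0$ itself, and then transport it to the deformation $(E,\alpha,\psi)$ by base change along $\psi$. The formula for deformation parameters is immediate from the preceding proposition applied with $a = b$, since that result already computes the deformation parameter of any Frobenius twist of a given deformation, which is exactly what the source of $(-V_E)^b$ is.

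The key identity on $E_0$ that I would establish is
\[
F^{b}_{E_0^{(p^b)}} = (-V_{E_0})^{b}\colon E_0^{(p^b)} \lra E_0,
\]
where the target $E_0^{(p^{2b})}$ of the left-hand side is tautologically identified with $E_0$ via $\sigma_{\F_{p^2}}^{2b} = \id$. To prove it, factor $F^b_{E_0^{(p^b)}}\circ F^b_{E_0} = F^{2b}_{E_0}$ and use the standardness hypothesis $F^2_{E_0} = [-p]_{E_0}$ to compute $F^{2b}_{E_0} = [-p]^b = [(-1)^b] \circ [p^b]$. Combining with the universal relation $[p^b] = V^b_{E_0} \circ F^b_{E_0}$, I get
\[
F^b_{E_0^{(p^b)}} \circ F^b_{E_0} = [(-1)^b] \circ V^b_{E_0} \circ F^b_{E_0}.
\]
Because $F^b_{E_0}$ is a faithfully flat surjection of group schemes, precomposition with it is injective on $\Hom$-groups of elliptic curves, so I may cancel $F^b_{E_0}$ on the right; commuting $[(-1)^b]$ past the group homomorphism $V^b_{E_0}$ then yields the key identity.

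With the identity in hand, it remains to verify the commutativity of the square defining a deformation of $F^b$. Under $\alpha$ and $\alpha^{(p^b)}$, the reduction modulo $\mathfrak{m}_R$ of $(-V_E)^b$ is identified with the base change of $(-V_{E_0})^b$ along $\psi$, since relative Verschiebung, the $[n]$-maps, and Frobenius twists all commute with base change. By the key identity, this base change agrees with the relative Frobenius $F^b$ of $(\psi\circ\sigma^b)^*E_0 = (\psi^*E_0)^{(p^b)}$, whose target $(\psi^*E_0)^{(p^{2b})}$ is canonically $\psi^*E_0$. Thus the square commutes and $(-V_E)^b$ is a deformation of $F^b$ as asserted.

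The main obstacle will be bookkeeping the various Frobenius-twist identifications cleanly — in particular, verifying that the canonical identification $(\psi^*E_0)^{(p^{2b})} = \psi^*E_0$ appearing in the bottom row of the square is induced from the analogous identification $E_0^{(p^{2b})} = E_0$ on $E_0$ itself, and justifying the cancellation of $F^b_{E_0}$ on the right of the $\Hom$-equation. The two features of standardness play distinct roles here: $k = \F_{p^2}$ provides $\sigma_{\F_{p^2}}^2 = \id$ and hence all the required twist identifications, while $F^2_{E_0} = [-p]_{E_0}$ drives the algebraic computation producing $(-V_{E_0})^b$.
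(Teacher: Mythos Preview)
Your proof is correct and follows essentially the same approach as the paper: both establish the identity $(-V_{E_0})^b = F^b$ as maps $E_0^{(p^b)}\to E_0$ using the standardness condition $F^2=[-p]$ together with $[p]=VF$, and then verify the deformation-of-$F^b$ square by base-changing this identity along $\psi$. Your version is simply more explicit about the cancellation of $F^b_{E_0}$ and about the Frobenius-twist bookkeeping, whereas the paper asserts the identity in one line.
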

\begin{proof}
Since $E_0$ is defined over $\F_{p^2}$, we have
$E_0^{(p^{2r})}=E_0$ for all $r$.
Since $E_0/\F_{p^2}$ is a standard supersingular curve, we have that
$F^2=-p=-VF$ 
on $E_0$, and therefore that 
\[
(-V_{E_0})^b=F_{E_0^{(p)}}^b\colon E_0^{(p^b)}\ra E_0^{(p^{2b})}=E_0. 
\]
Thus, the commutative diagram of elliptic curves and isogenies over $k_R$
\[\xymatrix{
{E^{(p^b)}\otimes k_R} \ar[r]^{(-V_E)^b} \ar[d]_{\alpha^{(p^b)}}
& {E\otimes k_R} \ar[d]^{\alpha}
\\
{\psi^*E_0^{(p^b)}} \ar[r]_{(-V)^b=F^b}
&{ \psi^*E_0}
}\]
shows that $(-V_E)^b$ is a deformation of $F^b$.
\end{proof}

\subsection{Isogenies of type $(a,b)$}

Let $S$ be an $\F_p$-scheme, and let $E_1$ and $E_2$ be elliptic
curves over $S$.  An \dfn{isogeny of type $(a,b)$}
\cite{katz-mazur}*{13.3.4} is a $p^{a+b}$-isogeny  $f\colon E_1\ra
E_2$ of curves over $S$ which admits a
factorization of the form 
\[
E_1 \xra{F^a} E_1^{(p^a)} \xra[\sim]{g} E_2^{(p^b)} \xra{V^b} E_2,
\]
where $g$ is an isomorphism.  
Equivalently, $f$ is of type $(a,b)$ if it admits a factorization of
the form
\[
E_1 \xra{F^a} E_1^{(p^a)} \xra[\sim]{g'} E_2^{(p^b)} \xra{(-V)^b} E_2.
\]

\begin{prop}
Let $E_0/k$ be a standard supersingular elliptic curve, and let
$(E_1,\alpha_1,\psi_1)$ and $(E_2,\alpha_2,\psi_2)$ be two
deformations of $E_0$ to an artinian local $\F_p$-algebra $R$.
Suppose $r=a+b$. 
The following are equivalent.
\begin{enumerate}
\item There exists a (necessarily unique)  isogeny $f\colon E_1\ra
  E_2$ which is (i) a 
  deformation of $F^r$, and (ii) of type $(a,b)$.
\item $\phi_{(E_1,\alpha_1,\psi_1)}\circ \sigma^a =
\phi_{(E_2,\alpha_2,\psi_2)}\circ \sigma^b$ as maps $A\ra R$.
\end{enumerate}
\end{prop}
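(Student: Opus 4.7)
The plan is to identify both conditions with the single statement that the ``Frobenius-twisted'' deformations $(E_1^{(p^a)},\alpha_1^{(p^a)},\psi_1\circ\sigma^a)$ and $(E_2^{(p^b)},\alpha_2^{(p^b)},\psi_2\circ\sigma^b)$ are isomorphic as objects of $\Def(R)$. Since $E_0/\F_{p^2}$ is standard supersingular, Proposition \ref{prop:verschiebung-is-def-frob} gives that $(-V_{E_2})^b$ is a deformation of $F^b$ between these objects, with classifying map $\phi_{(E_2,\alpha_2,\psi_2)}\circ\sigma^b$. On the other side, the tautological example following the definition of $\Def(R)$ gives that $F^a_{E_1}$ is a deformation of $F^a$ from $(E_1,\alpha_1,\psi_1)$ to $(E_1^{(p^a)},\alpha_1^{(p^a)},\psi_1\circ\sigma^a)$ with classifying map $\phi_{(E_1,\alpha_1,\psi_1)}\circ\sigma^a$. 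Thus, the condition that the two twisted deformations are isomorphic is, by the universality of $A$, literally statement (2). (One checks that the residue-field maps $\psi_1\circ\sigma^a$ and $\psi_2\circ\sigma^b$ agree on $k=\F_{p^2}$ under the assumption $\psi_2=\psi_1\circ\sigma^r$, because $\sigma^2$ acts as the identity on $\F_{p^2}$.)

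For $(1)\Rightarrow(2)$, take the given isogeny $f$ of type $(a,b)$ and factor it as $f=(-V)^b\circ g\circ F^a$ with $g\colon E_1^{(p^a)}\xrightarrow{\sim} E_2^{(p^b)}$ an isomorphism of elliptic curves. Since $f$ is assumed to be a deformation of $F^{a+b}=F^r$, and the outer two maps are already known to be deformations of $F^a$ and $F^b$ with the $\alpha_i^{(p^c)}$ trivializations, chasing the commuting squares that define ``deformation of $F^c$'' forces $g$ to intertwine $\alpha_1^{(p^a)}$ with $\alpha_2^{(p^b)}$, i.e.\ to be an isomorphism of deformations. Hence the two twisted deformations are isomorphic, so (2) holds.

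For $(2)\Rightarrow(1)$, the coincidence of classifying maps produces a unique isomorphism of deformations $g\colon (E_1^{(p^a)},\alpha_1^{(p^a)},\psi_1\circ\sigma^a)\xrightarrow{\sim}(E_2^{(p^b)},\alpha_2^{(p^b)},\psi_2\circ\sigma^b)$. Define $f:=(-V_{E_2})^b\circ g\circ F^a_{E_1}$. By construction $f$ is an isogeny of type $(a,b)$, and as a composite of a deformation of $F^a$, an isomorphism of deformations, and a deformation of $F^b$, it is a deformation of $F^r$. Uniqueness of $f$ as a deformation of $F^r$ follows from Proposition \ref{prop:def-of-frob-is-unique}.

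There is no real hard step: the substantive content is already packaged in Proposition \ref{prop:verschiebung-is-def-frob}, which is what aligns the two ``halves'' of the factorization via a common deformation. The only thing to be careful about is the bookkeeping on the trivializations $\alpha_i^{(p^c)}$ and residue-field maps $\psi_i\circ\sigma^c$ across the factorization, and in particular the observation that $\sigma^2$ is trivial on $\F_{p^2}$, which is what makes the two candidate residue-field maps compatible in the first place.
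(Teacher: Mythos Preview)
Your proof is correct and follows essentially the same approach as the paper: both directions hinge on identifying conditions (1) and (2) with the assertion that the twisted deformations $(E_1^{(p^a)},\alpha_1^{(p^a)},\psi_1\circ\sigma^a)$ and $(E_2^{(p^b)},\alpha_2^{(p^b)},\psi_2\circ\sigma^b)$ are isomorphic, using Proposition~\ref{prop:verschiebung-is-def-frob} to recognize $(-V)^b$ as a deformation of $F^b$. Your added remarks on the residue-field compatibility via $\sigma^2=\id$ on $\F_{p^2}$ and on uniqueness via Proposition~\ref{prop:def-of-frob-is-unique} are a bit more explicit than the paper's version, but the argument is the same.
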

\begin{proof}
Suppose $\phi_{(E_1,\alpha_1,\psi_1)}\circ \sigma^a=
\phi_{(E_2,\alpha_2,\psi_2)}\circ \sigma^b$, which means that there
exists an 
isomorphism $g\colon (E^{(p^a)},\alpha_1^{(p^a)}, \psi_1)\ra
(E^{(p^b)},\alpha_2^{(p^b)},\psi_2)$ in $\Def(R)$.  Then
$(-V)^b\circ g\circ F^a\colon E_1\ra E_2$ is a deformation of $F^r$
(using \eqref{prop:verschiebung-is-def-frob}) and an isogeny of type
$(a,b)$.

Conversely, consider a deformation of $F^r$ of the form $f=(-V)^b\circ
g\circ F^a\colon E_1\ra E_2$.  In the diagram
\[\xymatrix{
{E_1\otimes k_R} \ar[r]^{F^a} \ar[d]_{\alpha_1}
& {E_1^{(p^a)}\otimes k_R} \ar[r]^{g\otimes k_R} \ar[d]_{\alpha_1^{(p^a)}}
& {E_2^{(p^b)}\otimes k_R} \ar[r]^{(-V)^b}
\ar[d]^{\alpha_2^{(p^b)}}
& {E_2\otimes k_R} \ar[d]^{\alpha_2}
\\
{\psi_1^*E_1} \ar[r]_{F^a}
& {\psi_1^*E_1^{(p^a)}} \ar@{=}[r]
& {\psi_1^*E_1^{(p^a)}} \ar[r]_{F^b}
& {\psi_1^*E_1^{(p^{a+b})}}
}\]
the left-hand and right-hand squares, as well as the large rectangle,
commute.  Therefore we must have that
$\alpha_1^{(p^a)}=\alpha_2^{(p^b)}\circ (g\otimes k_R)$, whence
$(E_1^{(p^a)}, \alpha_1^{(p^a)},\psi_1\circ \sigma^a)$ and
$(E_2^{(p^b)}, \alpha_2^{(p^b)}, \psi_2\circ \sigma^b)$ are isomorphic
deformations.
\end{proof}

Given a choice of generator $x\in A$, we can restate this as follows.
\begin{cor}
Let $E_0/k$ be a standard supersingular curve.
Let $(E_1,\alpha_1,\psi_1)$ and $(E_2,\alpha_2,\psi_2)$ be two objects
of $\Def_{E_0/k}(R)$, with deformation parameters
$x_i=x(E_i,\alpha_i,\psi_i)\in R$ for $i=1,2$. 
There exists a (necessarily unique) morphism $f\colon
(E_1,\alpha_1,\psi_1)\ra (E_2,\alpha_2,\psi_2)$ in $\Def_{E_0/k}(R)$
of type $(a,b)$ if and only if
\begin{enumerate}
\item [(i)] $\psi_2=\psi_1\circ \sigma^{a+b}$, and 
\item [(ii)] $x_1^{p^a}=x_2^{p^b}$.
\end{enumerate}
\end{cor}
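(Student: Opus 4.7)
The plan is to reduce this corollary to the preceding proposition by unpacking the parametrization $\phi_i \defeq \phi_{(E_i,\alpha_i,\psi_i)}\colon A \to R$ in terms of the chosen generator $x \in A \approx k\powser{x}$. First observe that any morphism in $\Def_{E_0/k}(R)$ is, by definition, a deformation of $F^m$ for some $m$, hence a $p^m$-isogeny; so an isogeny of type $(a,b)$, being a $p^{a+b}$-isogeny, can only occur as a morphism in $\Def(R)$ when it is a deformation of $F^{a+b}$. This matches the hypothesis of the preceding proposition (with $r=a+b$), so the existence of $f$ is equivalent to the equality $\phi_1 \circ \sigma^a = \phi_2 \circ \sigma^b$ of ring maps $A \to R$. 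The whole task is therefore to rewrite this equality in terms of the pairs $(\psi_i, x_i)$.

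A local ring homomorphism $A = k\powser{x} \to R$ is determined by its restriction to $k$ (which is $\psi_i$) together with the image of $x$ (which is $x_i \in \mathfrak{m}_R$). The Frobenius $\sigma^m$ on $A$ acts by $c \mapsto c^{p^m}$ on $k$ and by $x \mapsto x^{p^m}$. Hence $\phi_i \circ \sigma^m$ is the local ring homomorphism determined by $\psi_i \circ \sigma^m\colon k \to k_R$ and by $x_i^{p^m} \in R$, so the equality $\phi_1 \circ \sigma^a = \phi_2 \circ \sigma^b$ decomposes as the conjunction of $x_1^{p^a} = x_2^{p^b}$, which is (ii), and $\psi_1 \circ \sigma^a = \psi_2 \circ \sigma^b$ as maps $k \to k_R$.

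The one remaining step is to identify $\psi_1 \circ \sigma^a = \psi_2 \circ \sigma^b$ with condition (i), $\psi_2 = \psi_1 \circ \sigma^{a+b}$. Here the hypothesis that $E_0$ is a \emph{standard} supersingular curve is essential: since $k = \F_{p^2}$, every element $d$ in the image $\psi_i(k) \subset k_R$ satisfies $d^{p^2} = d$. Starting from $\psi_1(c)^{p^a} = \psi_2(c)^{p^b}$ and raising to the $p^b$ power yields $\psi_1(c)^{p^{a+b}} = \psi_2(c)^{p^{2b}} = \psi_2(c)$, which is (i); the converse implication is analogous. This brief use of the $\F_{p^2}$-structure is the only step that is not purely formal, and it constitutes the mild obstacle in an otherwise direct translation.
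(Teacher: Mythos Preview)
Your proof is correct and follows essentially the same approach as the paper: both reduce to the preceding proposition and then observe that, because $k=\F_{p^2}$, the condition $\psi_1\circ\sigma^a=\psi_2\circ\sigma^b$ is equivalent to $\psi_2=\psi_1\circ\sigma^{a+b}$. You simply spell out in more detail the translation between the classifying maps $\phi_i$ and the data $(\psi_i,x_i)$, which the paper leaves implicit.
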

\begin{proof}
The only thing to note is that since $k=\F_{p^2}$,
(i) is equivalent to $\psi_1\circ \sigma^a=\psi_2\circ \sigma^b$.
\end{proof}

\subsection{Explicit description of the deformation category of a 
  standard supersingular curve}
\label{subsec:explicit-description-ss}

As before, $E_0/k$ is a standard supersingular curve.

Let $F_{p^r}(x,y)\in k[x,y]$ be the polynomial given by
\[
F_{p^r}(x,y)= \prod_{i+j=r}(x^{p^i}-y^{p^j}).
\]
\begin{prop}\label{prop:explicit-description-of-def-category}
Let $(E_1,\alpha_1,\psi_1)$ and $(E_2,\alpha_2,\psi_2)$ be two
deformations of $E_0$ to an artinian local $\F_p$-algebra $R$, with
deformation parameters 
$x_i=x(E_i,\alpha_i,\psi_i)\in R$.  There exists a (necessarily
unique) deformation of $F^r$
from
$(E_1,\alpha_1,\psi_1)$ to $(E_2,\alpha_2,\psi_2)$ if and only if
\begin{enumerate}
\item [(i)] $\psi_2=\psi_1\circ \sigma^r$, and
\item [(ii)] $F_{p^r}(x_1,x_2)=0$.
\end{enumerate}

Furthermore, there is a universal example of a deformation of $F^r$,
given by $f\colon s^*E_\univ\ra t^*E_\univ$, defined over the ring
$A_r\approx k\powser{x_1,x_2}/(F_{p^r}(x_1,x_2))$ with $s,t\colon
A\approx k\powser{x}\ra 
A_r$ given by $s(f(x))=f(x_1)$ and $t(f(x))=f^{(p^r)}(x_2)$.  
\end{prop}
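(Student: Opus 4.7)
The plan is to identify the $A$-algebra $\cS_{p^r}(E_\univ/A)$ with $A_r$, compatibly with the source and target maps $s,t$ in the statement. Once this identification is in hand, the universal example $f$ is the tautological $p^r$-isogeny $s^*E_\univ \ra t^*E_\univ$ produced by the universal rank-$p^r$ subgroup scheme over $\Spec\cS_{p^r}(E_\univ/A) = \Isog{p^r}_{E_\univ/A}$, and the biconditional follows from representability.

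First I would dispose of the forward direction. Condition (i) is part of the definition of a deformation of $F^r$. For (ii), given such an $f\colon E_1 \ra E_2$ over artinian local $R$, I would filter $\Ker f$ as $0 \subsetneq G_1 \subsetneq \cdots \subsetneq G_r = \Ker f$ by rank $p^i$ finite flat subgroup schemes; such filtrations exist because $E_0$ is supersingular and $R$ is local, by the structure analysis of \cite{katz-mazur}*{Ch.\ 13}. The filtration yields a factorization $f = f_r \circ \cdots \circ f_1$ with each $f_i$ a deformation of $F$, hence of type $(1,0)$ or $(0,1)$ (by direct inspection in the former case and by \eqref{prop:verschiebung-is-def-frob} in the latter). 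Applying the preceding corollary at each step gives $x_{i-1}^{p^{a_i}} = x_i^{p^{b_i}}$ between consecutive intermediate deformation parameters, and telescoping yields $x_1^{p^a} = x_2^{p^b}$ for $(a,b) = (\sum a_i, \sum b_i)$ with $a+b = r$, so $F_{p^r}(x_1, x_2) = 0$ in $R$.

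Next I would construct the universal example. The scheme $\Isog{p^r}_{E_\univ/A}$ is finite and locally free over $A$, and the source/target assignments yield two local ring maps $s,t\colon A \ra \cS_{p^r}(E_\univ/A)$, which combine (using $\mathfrak{m}_A$-adic completeness of the target) into a continuous ring map $\Psi\colon k\powser{x_1,x_2} \ra \cS_{p^r}(E_\univ/A)$ with $\Psi(x_1) = s(x)$ and $\Psi(x_2) = t(x)$. Applying the forward direction to the tautological deformation of $F^r$ over $\cS_{p^r}$ shows $F_{p^r}(x_1,x_2) \in \Ker\Psi$, yielding a factored map $\bar\Psi\colon A_r \ra \cS_{p^r}(E_\univ/A)$. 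A rank comparison then finishes the identification: $F_{p^r}(x_1,x_2)$ is a Weierstrass polynomial in $x_2$ over $k\powser{x_1}$ of degree $\sigma(p^r) = 1 + p + \cdots + p^r$, so $A_r$ is a free $A$-module of rank $\sigma(p^r)$ via $s$; meanwhile $\cS_{p^r}(E_\univ/A)$ has $A$-rank $\sigma(p^r)$ as well (the number of rank-$p^r$ subgroups of $(\Q/\Z)^2$). A surjection between finitely generated free modules of the same rank is an isomorphism, so it suffices to verify that $\bar\Psi$ is surjective.

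The main obstacle is precisely this surjectivity. By Nakayama's lemma, it reduces after tensoring with $A/\mathfrak{m}_A = k$ to showing that the finite local $k$-algebra $\cS_{p^r}(E_0/k)$ is generated as a $k$-algebra by the single element $t(x)$; one expects $\cS_{p^r}(E_0/k) \approx k[y]/(y^{\sigma(p^r)})$ via $y = t(x)$. This is the substantive geometric input, drawn from the analysis of the moduli of finite subgroup schemes of a supersingular elliptic curve in \cite{katz-mazur}*{Ch.\ 13}. Once $\bar\Psi$ has been identified as an isomorphism, the universal property is formal: any deformation of $F^r$ over $R$ satisfying (i) and (ii) corresponds via the source and target classifying maps to an $A \otimes A$-algebra map $A_r \approx \cS_{p^r}(E_\univ/A) \ra R$, with uniqueness already guaranteed by \eqref{prop:def-of-frob-is-unique}.
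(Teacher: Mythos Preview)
Your forward-direction argument has a genuine gap. You assert that each factor $f_i$, being a deformation of $F$, is of type $(1,0)$ or $(0,1)$; but this fails over artinian local rings with zero divisors. Concretely, take $R = A_1/\mathfrak{m}^N$ for $N\geq 2$, where $A_1 = k\powser{x_1,x_2}/(F_p(x_1,x_2))$, and consider the tautological $p$-isogeny over $R$: neither factor $x_1^p - x_2$ nor $x_1 - x_2^p$ of $F_p$ vanishes in $R$, so by the very corollary you invoke this isogeny is of neither type. Hence you cannot conclude $y_{i-1}^{p^{a_i}} = y_i^{p^{b_i}}$ at each step, and the telescoping collapses. (The existence of the filtration of $\Ker f$ over a general artinian local $R$ also deserves justification, though this is secondary.)

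The paper's proof runs in the opposite direction and thereby avoids the problem. It takes from \cite{katz-mazur}*{\S6.8} the fact that $\cS_{p^r}(E_\univ/A) \approx k\powser{x_1,x_2}/(g)$ for a \emph{principal} ideal $(g)$ --- this subsumes your surjectivity input and adds principality of the kernel. Then, rather than decomposing a general isogeny into types, it uses the \emph{existence} of an isogeny of each type $(a,b)$ with $a+b=r$ (built explicitly from Frobenius and Verschiebung) to obtain ring maps $k\powser{x_1,x_2}/(g) \to k\powser{x_1,x_2}/(x_1^{p^a}-x_2^{p^b})$; hence each $x_1^{p^a}-x_2^{p^b}$ divides $g$. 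Unique factorization in $k\powser{x_1,x_2}$ then gives $F_{p^r}\mid g$, and a Weierstrass-degree comparison modulo $x_1$ (both sides have degree $1+p+\cdots+p^r$ in $x_2$) shows they are associates.
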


\begin{rem}
Consider a different choice $x'\in k\powser{x}$ of deformation parameter, so
that $x'=f(x)=c_1x+\cdots \in k\powser{x}$ with $c_1\neq0$, and let
$x_1'=f(x_1)$ and $x_2'=f^{(p^r)}(x_2)$ in $k\powser{x_1,x_2}$.
Because $k=\F_{p^2}$, we have that $f^{(p^{2r-i})}(x)=f^{(p^i)}(x)$,
and thus 
\[
x_1'^{p^i}-x_2'^{p^{r-i}}= f(x_1)^{p^i} -
f^{(p^r)}(x_2)^{p^{r-i}} = f^{(p^i)}(x_1^{p^i}) -
f^{(p^i)}(x_2^{p^{r-i}}) = (x_1^{p^i}-x_2^{p^{r-i}})(\text{unit in
  $k\powser{x_1,x_2}$}). 
\]
Thus, the elements $F_{p^r}(x_1,x_2)$ and
$F_{p^r}(x_1',x_2')$ generate the same ideal in 
$k\powser{x_1,x_2}$; our description of $A_r$ does not depend on the
choice of deformation parameter.
\end{rem}

\begin{proof}
We have already noted that giving a deformation of $F^r$ with given
domain $(E_1,\alpha_1,\psi_1)$ is the same as giving a subgroup scheme
of order $p^r$.  
According to the discussion in \cite{katz-mazur}*{\S6.8 (especially
pp.\ 181--3)}, the universal example of such a subgroup scheme $G$ of a
deformation $E$ of $E_0$ is defined over  a ring of the form
$A_r=k\powser{x_1,x_2}/J$, where $J$ is a principal ideal; $x_1$ and
$x_2$ are the deformation 
parameters of $E$ and $E/G$ respectively.  Thus, it suffices to
describe a generator $g$ of $J$.  That we can take
$g=F_{p^r}(x_1,x_2)$ is the 
essential content of \cite{katz-mazur}*{13.4.6}, which is an
application of the ``crossings theorem'' \cite{katz-mazur}*{13.1.3}.

We can give a quick and dirty proof that  $J=(F_{p^r}(x_1,x_2))$.
As noted, we can write $J=(g)$ for some element $g$.
For $a+b=r$ the projection map
\[
\gamma_{ab}\colon A_r=k\powser{x_1,x_2}/(g) \ra 
k\powser{x_1,x_2}/(x_1^{p^a}-x_2^{p^b}),
\]
is precisely the ring homomorphism which classifies the universal
deformation of $F^r$ of type $(a,b)$.  For each $a+b=r$ write 
$g_{ab}=x_1^{p^a}-x_2^{p^b}$; the existence of $\gamma_{ab}$ shows that
$g_{ab}$ divides $g$.  We have that $g_{ab} =
f_{ab}^{p^{\min(a,b)}}$, where 
$f_{ab}$ is an irreducible element of $k\powser{x,y}$, and any pair of
the $f_{ab}$'s are distinct-up-to-units. 
Thus, since $k\powser{x_1,x_2}$ is a UFD,
the product $F_{p^r}(x_1,x_2)=\prod g_{a,b}$ must also divide
$g$.  
We know that $A_r$
(since it classifies subgroup schemes of order $p^r$) is finite and
free over $k\powser{x_1}$ of rank $1+p+\cdots+p^r$; thus 
\[
g\equiv x_2^{1+p+\dots+p^r}\cdot\text{(unit)} \equiv
F_{p^r}(x_1,x_2)\cdot 
\text{(unit)} \mod (x_1),
\]
and so $g=F_{p^r}(x_1,x_2)\cdot \text{(unit)}$ by Weierstrass preparation.
\end{proof}

As a result of the above proposition, the category $\Def(R)$ of
deformations of a 
standard supersingular curve to an artinian local $\F_p$-algebra is
equivalent to the category in which
\begin{enumerate}
\item [(1)] objects are pairs $(\psi,a)$ consisting of ring
  homomorphisms $\psi\colon k\ra k_R$
  and elements $a\in \mathfrak{m}_R$, and
\item [(2)] morphisms $(\psi_1,a_1)\to (\psi_2,a_2)$ are
  integers $r\geq0$ such that $\psi_2=\psi_1\circ \sigma^{r}$ and
  $F_{p^r}(a_1,a_2)=\prod_{i+j=r}(a_1^{p^i}-a_2^{p^j})=0$ in $R$.  
\end{enumerate}
It is not \emph{a priori} obvious that composition in the above
category well-defined (though it must be by
\eqref{prop:explicit-description-of-def-category}), 
since this would amount to showing that $F_{p^r}(a,b)=0$ and
$F_{p^{r'}}(b,c)=0$ imply $F_{p^{r+r'}}(a,c)=0$ for $a,b,c\in
\mathfrak{m}_R$.  
In the Appendix we give a direct proof of this fact about these
polynomials.

\subsection{Explicit description of $\cK_{p^r}^\bullet$ for universal
  deformations of a  supersingular curve}
\label{subsec:explicit-description-complex-ss}

Fix a universal deformation $E/S$ of a standard supersingular curve
$E_0/k$, where $S=\Spec A$.  Fix an isomorphism $A=k\powser{x}$, 
and write 
$A_r= \cS_{p^r}(E/S)$, and more generally
$A_{r_1,\dots,r_q}=\cS_{p^{r_1},\dots, p^{r_q}}(E/S)$.  

The discussion of \S\ref{subsec:explicit-description-ss} can be
summarized as follows. 
\begin{prop}\label{prop:description-of-complex-ss}
Let $s,t\colon A\ra A_r$ be the maps classifying respectively the
source and target of the universal deformation of $F^r$, as in
\eqref{prop:explicit-description-of-def-category}. 
\begin{enumerate}
\item There are isomorphisms
\[
A_r\approx k\powser{x_0,x_1}/(F_{p^r}(x_0,x_1)),
\]
such that $s\colon A\ra A_r$ is given by $s(f(x))=f(x_0)$ and $t\colon
A\ra A_r$ is given by $t(f(x))=f^{(p^r)}(x_1)$. 

\item 
There are isomorphisms
\begin{align*}
  A_{r_1,\dots,r_q} &\approx A_{r_1}{}^t\!\otimes_A\!{}^s\cdots
  {}^t\!\otimes_A \!{}^s A_{r_q}
\\
&\approx k\powser{x_0,\dots,x_q}(F_{p^{r_1}}(x_0,x_1),\dots,
F_{p^{r_q}}(x_{q-1},x_q)),
\end{align*}
where the map $s_k\colon A\ra A_{r_1,\dots,r_q}$ given by
$s_k(f(x))=f^{(p^{r_1+\cdots+r_k})}(x_k)$ classifies the quotient
curve $E/G_k$ (in the notation of
\S\ref{subsec:chains-of-subgroups}).

\item 
With respect to the above isomorphism, the map $u_k\colon
A_{r_1,\dots,r_{k-1}+r_k,\dots,r_q}\ra 
A_{r_1,\dots, r_q}$ is given by $u_k(x_i)=x_i$ if $i<k$, and
$u_k(x_i)=x_{i+1}$ if $i\geq k$.
\end{enumerate}
\end{prop}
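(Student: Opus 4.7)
\emph{Proof proposal.} The plan is to deduce all three parts from Proposition \ref{prop:explicit-description-of-def-category} together with the observation that $\cS_{p^{r_1},\dots,p^{r_q}}(E/S)$ represents the functor of chains of finite flat subgroup schemes, equivalently (via the correspondence between subgroups of order $p^r$ and deformations of $F^r$) the functor of composable chains of deformations of powers of Frobenius.

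For (1), $A_r$ classifies rank $p^r$ subgroups $G \subset E_R$; each is the kernel of an essentially unique deformation of $F^r$ out of $E_R$, so the functor represented by $A_r$ is equivalent to that of deformations of $F^r$ with prescribed source. Proposition \ref{prop:explicit-description-of-def-category} identifies the universal such object as $k\powser{x_0,x_1}/(F_{p^r}(x_0,x_1))$ with the claimed source and target maps; the Frobenius twist in $t$ arises because the target $E/G$ deforms $E_0^{(p^r)}$ along $\psi \circ \sigma^r$.

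For (2), a chain $G_1 \subsetneq \cdots \subsetneq G_q$ in $E_R$ with successive quotients of rank $p^{r_i}$ is the same data as a sequence of composable isogenies $E_R \to E_R/G_1 \to \cdots \to E_R/G_q$, each a deformation of the appropriate power of Frobenius. Thus the representing functor is the iterated fibre product of one-step functors, glued by identifying the target of one step with the source of the next, and the representing ring is the iterated tensor product $A_{r_1}{}^t\!\otimes_A\!{}^s \cdots {}^t\!\otimes_A\!{}^s A_{r_q}$. Substituting the presentation from (1) into each factor yields the stated $k\powser{x_0,\dots,x_q}$ presentation with relations $F_{p^{r_i}}(x_{i-1},x_i)$. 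The formula for $s_k$ follows because $s_k$ classifies the $k$th quotient curve $E_R/G_k$, whose deformation parameter is $x_k$ and which deforms $E_0^{(p^{r_1+\cdots+r_k})}$.

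For (3), the map $u_k$ omits $G_k$ from a chain; on quotient curves this simply deletes $E/G_k$ from the list $E, E/G_1, \dots, E/G_q$ while preserving the deformation parameters of all remaining curves, so the $i$th variable of the source ring maps to $x_i$ for $i < k$ and to $x_{i+1}$ for $i \geq k$. The principal difficulty is not any one step but careful bookkeeping: tracking the Frobenius twist of (1) through the iterated tensor product, verifying that the representing ring really classifies chains rather than chains with auxiliary data, and invoking the composition law in $\Def(R)$ (to the effect that $f_{k+1} \circ f_k$ is again a Frobenius deformation, which is also what makes $u_k$ well-defined) established in \S\ref{subsec:explicit-description-ss}. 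Once those are in place the argument is purely formal.
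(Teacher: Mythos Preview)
Your proposal is correct and matches the paper's approach exactly: the paper gives no separate proof of this proposition, stating only that it ``summarizes'' the discussion of \S\ref{subsec:explicit-description-ss}, and your argument is precisely the unpacking of that summary---using \eqref{prop:explicit-description-of-def-category} for part (1), the iterated fibre-product/tensor-product description of chains for (2), and the obvious effect of forgetting $G_k$ for (3). Your remarks on the Frobenius-twist bookkeeping and the composition law in $\Def(R)$ are the right places to be careful, and the paper handles them in the same way (the latter in the Appendix).
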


This determines explicitly the structure of the complex
$\cK^\bullet_{p^r}(E/S)$.  
Thus, to prove case (C) of \S\ref{subsec:reduction-to-cases}, it suffices to prove (1) and (2) of
\eqref{main-thm} for this 
explicit complex.  

\subsection{Two useful lemmas}

The following two lemmas will be needed in the next section.
\begin{lemma}\label{lemma:socle-lemma}
For all $r\geq 1$, the homomorphism $u_1\colon A_{{r+1}}\ra
A_{1,{r}}$ is the inclusion of an $A$-module summand, where we regard
$A_{{r+1}}$ as an $A$-module by $s\colon A\ra A_{{r+1}}$.  
\end{lemma}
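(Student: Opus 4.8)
The plan is to realise $u_1$ as the inclusion of the span of part of an $A$-basis. By \eqref{prop:description-of-complex-ss} we have $A_{r+1}\approx k\powser{x_0,x_1}/(F_{p^{r+1}}(x_0,x_1))$ and $A_{1,r}\approx k\powser{x_0,x_1,x_2}/(F_p(x_0,x_1),F_{p^r}(x_1,x_2))$, with $A\approx k\powser{x}$ acting through $x\mapsto x_0$ and with $u_1(x_0)=x_0$, $u_1(x_1)=x_2$. Since $F_{p^{r+1}}(0,x_1)$, $F_p(0,x_1)$ and $F_{p^r}(0,x_2)$ are unit multiples of $x_1^{M}$, $x_1^{p+1}$ and $x_2^{\sigma(p^r)}$ respectively, where $M=\sigma(p^{r+1})$, Weierstrass preparation gives $A$-module bases $\{x_1^j\}_{0\le j<M}$ of $A_{r+1}$ and $\{x_1^ax_2^b\}_{0\le a\le p,\ 0\le b<\sigma(p^r)}$ of $A_{1,r}$; in particular $A_{1,r}$ is $A$-free of rank $(p+1)\sigma(p^r)$, and $u_1$ carries the first basis onto $\{x_2^j\}_{0\le j<M}$. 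It therefore suffices to show that $\{x_2^j\}_{0\le j<M}$ is part of an $A$-basis of $A_{1,r}$; since $A$ is local and $A_{1,r}$ is $A$-free of rank $(p+1)\sigma(p^r)=\dim_kB$, where $B:=A_{1,r}\otimes_AA/\mathfrak m\approx k[x_1,x_2]/(x_1^{p+1},F_{p^r}(x_1,x_2))$, it is enough, by Nakayama's lemma and the fact that a generating set of cardinality equal to the rank is a basis of a free module, to prove that the $M$ elements $\overline{x_2^j}$, $0\le j<M$, are $k$-linearly independent in $B$.

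To prove this I would put on $k[x_1,x_2]$ the grading with $w(x_2)=1$ and $w(x_1)=p^r$ and pass to the associated graded ring $\gr_w(B)$ for the induced filtration. The key computation is the $w$-initial form (lowest-weight homogeneous part) of $F_{p^r}(x_1,x_2)=\prod_{i+j=r}(x_1^{p^i}-x_2^{p^j})$: in the factor with $(i,j)=(0,r)$ the monomials $x_1$ and $x_2^{p^r}$ both have weight $p^r$, while in each factor with $i\ge1$ the monomial $x_2^{p^j}$ has strictly smaller weight than $x_1^{p^i}$, so
\[
\operatorname{in}_w(F_{p^r})=\pm\,x_2^{s}\,(x_1-x_2^{p^r})=\pm\bigl(x_2^{\sigma(p^r)}-x_1x_2^{s}\bigr),\qquad s:=\sigma(p^{r-1}),
\]
with $s+p^r=\sigma(p^r)$. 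Since $\operatorname{in}_w(F_{p^r})$ is monic of $x_2$-degree $\sigma(p^r)$, the ring $k[x_1,x_2]/\bigl(x_1^{p+1},\,x_2^{\sigma(p^r)}-x_1x_2^{s}\bigr)$ has $k$-dimension $(p+1)\sigma(p^r)=\dim_kB$, hence the initial ideal of $(x_1^{p+1},F_{p^r})$ is generated by the initial forms $x_1^{p+1}$ and $x_2^{\sigma(p^r)}-x_1x_2^{s}$ (i.e. $\{x_1^{p+1},F_{p^r}\}$ is a $w$-Gr\"obner basis), and $\gr_w(B)\approx k[x_1,x_2]/\bigl(x_1^{p+1},\,x_2^{\sigma(p^r)}-x_1x_2^{s}\bigr)$.

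In this explicit graded ring the homogeneous relation $x_2^{\sigma(p^r)}=x_1x_2^{s}$ reduces an arbitrary power $x_2^j$ by repeatedly lowering the $x_2$-exponent by $p^r$ at the cost of one factor $x_1$: for $j\ge\sigma(p^r)$ one gets $x_2^j=x_1^{k}x_2^{\,j-kp^r}$ with $k=k(j):=\lceil(j-\sigma(p^r)+1)/p^r\rceil$, the residual exponent lying in $[s,\sigma(p^r)-1]$, and for $j<\sigma(p^r)$ the power is already reduced. This monomial is nonzero in $\gr_w(B)$ precisely when $k(j)\le p$, that is, precisely when $j\le M-1$. Hence for $0\le j<M$ the element $x_2^j$ has $w$-value exactly $j$; since these values are pairwise distinct, the initial forms of $\overline{x_2^0},\dots,\overline{x_2^{M-1}}$ lie in distinct graded pieces of $\gr_w(B)$ and are therefore $k$-linearly independent, so the $\overline{x_2^j}$ themselves are linearly independent in $B$, as required. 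I expect the middle step to be the real obstacle: finding the correct weight $w(x_1)=p^r$ (forced by the single ``balanced'' factor $x_1-x_2^{p^r}$ of $F_{p^r}$) and checking the dimension count that identifies $\gr_w(B)$; after that, the argument is routine bookkeeping with exponents.
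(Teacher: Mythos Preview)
Your proof is correct. Both you and the paper reduce, via Nakayama, to showing that the powers $1,z,\dots,z^{M-1}$ (your $x_2^j$) are linearly independent in $B=k[y,z]/(y^{p+1},F_{p^r}(y,z))$, equivalently that $z^{M-1}\neq0$ there. The paper does this in one line by a characteristic-$p$ trick: it computes $F_{p^r}(y,z)^p$ in $B$, where Frobenius and the relation $y^{p+1}=0$ collapse every factor $(y^{p^i}-z^{p^{r-i}})^p$ with $i\geq1$ to $-z^{p^{r-i+1}}$, yielding directly the identity $z^{M-1}=y^pz^{\sigma(p^r)-1}$, a monomial in the standard basis. Your route is a weighted-degree/Gr\"obner argument: you pick $w(x_1)=p^r$, identify $\gr_wB$ explicitly via a dimension count, and then iteratively rewrite $x_2^j$ using the homogeneous relation $x_2^{\sigma(p^r)}=x_1x_2^{s}$. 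At $j=M-1$ your rewriting gives exactly the same identity $x_2^{M-1}=x_1^px_2^{\sigma(p^r)-1}$, so the two arguments converge on the same endpoint. The paper's computation is shorter and genuinely exploits characteristic $p$; your argument is longer but more structural, makes no use of the Frobenius, and in fact computes the full associated graded of $B$ rather than just the one value needed.
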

\begin{proof}
By Nakayama's lemma it is enough to prove that the map $u_1$ is
injective after tensoring down 
along $A\approx k\powser{x}\ra k$.   Thus, it suffices to show that
the ring homomorphism
\[
 k\powser{z}/(z^{1+p+\cdots+p^{r+1}}) \ra
k\powser{y,z}/(y^{1+p},F_{p^{r}}(y,z))=B
\]
sending $z\mapsto z$ 
is injective.  It will suffice to show that
$z^{p+\cdots+p^{r+1}}\neq 0$ in $B$.  Observe that $B$ has a basis
over $k$ given by the monomials $y^iz^j$ with $0\leq i\leq p$ and
$0\leq j\leq p+p^2+\cdots +p^r$.

In the target ring $B$ we have
\begin{align*}
0=F_{p^{r}}(y,z)^p &=
(y-z^{p^{r}})^p(y^p-z^{p^{r-1}})^p(y^{p^2}-z^{p^{r-2}})^p\cdots
(y^{p^{r}}-z)^p
\\
&= \pm (y^p-z^{p^{r+1}})z^{p^r}z^{p^{r-1}}\cdots z^p
\end{align*}
and thus $z^{p+p^2+\cdots+p^{r+1}}= y^pz^{p+p^2+\cdots+p^r}$ is one of
the elements of our $k$-basis for $B$, and thus is non-zero.
\end{proof}

\begin{lemma}\label{lemma:relations}
There is a split short exact sequence of $A$-modules
\[
0\ra A_2\xra{u_1} A_{1,1} \xra{\bar{v}} A_1/s(A)\ra 0,
\]
where $\bar{v}$ is the composition of the projection $A_1\ra A_1/s(A)$ with
the ring homomorphism $v\colon A_{1,1}\ra A_1$ defined by
\[
v(x_0)=x_0=v(x_2),\quad v(x_1)=x_1, 
\]
using the identifications
$A_{1,1}=k\powser{x_0,x_1,x_2}/(F_p(x_0,x_1),F_p(x_1,x_2))$ and
$A_1=k\powser{x_0,x_1}/(F_p(x_0,x_1))$ of
\eqref{prop:explicit-description-of-def-category}. 
\end{lemma}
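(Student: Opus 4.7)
The plan is to verify that $v$ is a well-defined ring homomorphism, that the composite $\bar v\circ u_1$ vanishes, and that the resulting short sequence is exact and split, using \eqref{lemma:socle-lemma} together with a rank count over $A=k\powser{x}$.

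First I would check that $v\colon A_{1,1}\to A_1$ is well-defined and that $\bar v\circ u_1=0$. The defining relations of $A_{1,1}$ are $F_p(x_0,x_1)$ and $F_p(x_1,x_2)$; since $v$ sends $x_0\mapsto x_0$, $x_1\mapsto x_1$, $x_2\mapsto x_0$, the first goes to $F_p(x_0,x_1)=0$ in $A_1$, while the second goes to $F_p(x_1,x_0)=(x_1-x_0^p)(x_1^p-x_0)=(x_0^p-x_1)(x_0-x_1^p)=F_p(x_0,x_1)=0$. For the composite, the map $u_1\colon A_2\to A_{1,1}$ sends $x_0\mapsto x_0$ and $x_1\mapsto x_2$ by \eqref{prop:description-of-complex-ss}, so $v\circ u_1$ sends both $x_0$ and $x_1$ to $x_0$; its image lies in $s(A)=k\powser{x_0}\subset A_1$ and therefore dies in $A_1/s(A)$. (Consistency: $v\circ u_1(F_{p^2}(x_0,x_1))=F_{p^2}(x_0,x_0)=0$ since the middle factor $x_0^p-x_0^p$ vanishes.)

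Second, \eqref{lemma:socle-lemma} applied with $r=1$ tells us that $u_1\colon A_2\to A_{1,1}$ is the inclusion of an $A$-module summand; in particular it is injective and $A_{1,1}/u_1(A_2)$ is finitely generated free over $A$. The map $v$ is clearly surjective (its image contains $x_0$ and $x_1$), so $\bar v$ is surjective as well. To finish, I would compare $A$-ranks. Using $\rank_A\cS_{p^{r_1},\dots,p^{r_q}}(E/S)=\sigma(p^{r_1})\cdots\sigma(p^{r_q})$, one computes $\rank_A A_2=1+p+p^2$, $\rank_A A_{1,1}=(p+1)^2$, and $\rank_A(A_1/s(A))=p$, which satisfy $(1+p+p^2)+p=(p+1)^2$. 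Thus $A_{1,1}/u_1(A_2)$ is a free $A$-module of rank $p$, the same as $A_1/s(A)$; the induced surjection $\tilde v\colon A_{1,1}/u_1(A_2)\to A_1/s(A)$ between two free $A$-modules of rank $p$ must then be an isomorphism. Since $A_1/s(A)$ is projective, $\bar v$ admits a section, splitting the sequence.

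Essentially all the work has been packaged into \eqref{lemma:socle-lemma} and the rank formula, so no step presents a serious obstacle; the only points requiring actual verification are the symmetry $F_p(x_1,x_0)=F_p(x_0,x_1)$ (follows from pulling a sign out of each factor) and the observation that the image of $v\circ u_1$ lies in $s(A)$.
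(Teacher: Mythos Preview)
Your proof is correct and follows essentially the same route as the paper's: both argue that $u_1$ is injective with free cokernel of rank $p$, that $\bar v$ is surjective onto the free module $A_1/s(A)$ of rank $p$, and that a surjection between free $A$-modules of equal rank is an isomorphism. The paper packages this as a commutative square with $w\colon A_2\to A$ (sending $x_0,x_1\mapsto x$) and $v\colon A_{1,1}\to A_1$ as vertical maps, then compares horizontal cokernels; your linear presentation is equivalent, and your explicit appeal to \eqref{lemma:socle-lemma} to get freeness of $\Cok u_1$ is in fact the step the paper leaves implicit.
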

\begin{proof}
Consider the commutative square of ring maps
\[\xymatrix{
{k\powser{x_0,x_1}/(F_{p^2}(x_0,x_1))} \ar[r]^-{u_1} \ar[d]_{w}
& {k\powser{x_0,x_1,x_2}/(F_p(x_0,x_1),F_p(x_1,x_2))} \ar[d]^{v}
\\
{k\powser{x}} \ar[r]_-{s} 
& {k\powser{x_0,x_1}/(F_p(x_0,x_1))}
}\]
where $w(x_0)=x=w(x_1)$.  The horizontal maps are injective, and the
vertical maps are surjective, and the induced map of cokernels $\Cok
u_1\ra \Cok s$ is a surjective map between free
$A=k\powser{x_0}$-modules of rank $p$, and thus is an isomorphism.
\end{proof}

\section{The proof of the theorem for supersingular curves}
\label{sec:case-C}

Recall that we wish to prove statements (1) and (2) of
\eqref{main-thm} for a standard
supersingular curve $E_0/k$.  It is clear that it
suffices to prove these statements for the universal deformation
$E/S$, where $S=\Spec A$ with $A=k\powser{x}$.  Thus, from now on we
fix such a 
universal deformation $E/S$. 

By \S\ref{subsec:explicit-description-complex-ss}, we have obtained an
explicit description of 
the complexes $\cK_{p^r}^\bullet(E/S)$.  Thus, we will prove the
desired results  by means of an explicit
calculation. 

\subsection{A dual formulation}

We will not work directly with the complex $\cK_{p^r}^\bullet(E/S)$,
but rather with its dual.
Let $K_\bullet^r$ be the chain complex which is $A$-linear dual to
$\cK^\bullet_{p^r}(E/S)$.   Thus $K_q^r = \Hom_A(\cK^q_{p^r}(E/S),A)$,
where the $A$-module structure on $\cK^q_{p^r}(E/S)$ (and thus on $K_q^r$)
is induced by the ring homomorphisms $s\colon A\ra
A_{r_1,\dots,r_q}$. 
To prove that $H^j(\cK^\bullet_{p^r}(E/S))=0$ for $j\neq r$, and is a
projective $A$-module for $j=r$, we will use the following observation.

\begin{prop}\label{prop:dual-formulation}
Let $A$ be a commutative ring and let
$P^\bullet=(0\ra P^0\ra\cdots \ra P^n\ra0)$ be  a bounded cochain
complex of finitely generated 
projective $A$-modules.  Let $P_\bullet=\Hom_A(P^\bullet,A)$ denote
the chain complex obtained by taking $A$-linear duals.
Then the following are equivalent.
\begin{enumerate}
\item $H^j(P^\bullet)=0$ for $j\neq n$, and $H^n(P^\bullet)$ is a
  finitely generated projective $A$-module.
\item $H_j(P_\bullet)=0$ for $j\neq n$.
\end{enumerate}
\end{prop}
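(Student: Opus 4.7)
The plan is to reduce both directions to the standard observation that a bounded complex of finitely generated projective modules whose cohomology is concentrated in one degree and projective there is chain-homotopy equivalent to that cohomology placed in the appropriate degree; then use that $\Hom_A(-,A)$ is an additive functor (hence preserves chain homotopies) and that the canonical biduality map $P \to \Hom_A(\Hom_A(P,A),A)$ is an isomorphism on finitely generated projectives.

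For the direction $(1) \Rightarrow (2)$, I would invoke \eqref{prop:nak-lemma} (applied over $A$ itself, via condition (1)), which exhibits a chain homotopy equivalence between $P^\bullet$ and the complex $H^n(P^\bullet)$ concentrated in degree $n$. Applying $\Hom_A(-,A)$ term-wise produces a chain homotopy equivalence between $P_\bullet$ and $\Hom_A(H^n(P^\bullet),A)$ concentrated in degree $n$; reading off homology gives $(2)$.

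For $(2) \Rightarrow (1)$, which I expect to be the main (if still routine) step, I would argue by induction from the bottom of $P_\bullet = (0 \to P_n \xrightarrow{d_n} P_{n-1} \to \cdots \to P_0 \to 0)$. The hypothesis $H_0(P_\bullet)=0$ says $d_1$ is surjective, so the short exact sequence $0 \to \Ker d_1 \to P_1 \to P_0 \to 0$ splits (as $P_0$ is projective), making $\Ker d_1$ a finitely generated projective summand of $P_1$. Exactness at $P_k$ for $k<n$ gives $\im d_{k+1} = \Ker d_k$, and the sequence $0 \to \Ker d_{k+1} \to P_{k+1} \to \Ker d_k \to 0$ then splits inductively. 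At the top this yields $0 \to H_n(P_\bullet) \to P_n \to \Ker d_{n-1} \to 0$, splitting to exhibit $H_n(P_\bullet)$ as a finitely generated projective summand of $P_n$. Assembling these splittings gives a chain homotopy equivalence between $P_\bullet$ and $H_n(P_\bullet)$ in degree $n$.

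Finally, dualize once more: $\Hom_A(-,A)$ carries this homotopy equivalence to one between $\Hom_A(P_\bullet, A)$ and $\Hom_A(H_n(P_\bullet),A)$ concentrated in degree $n$. Since each $P^q$ is finitely generated projective, the double-dual $\Hom_A(P_\bullet,A)$ is canonically isomorphic to $P^\bullet$ as a cochain complex, so $H^j(P^\bullet)=0$ for $j \neq n$ and $H^n(P^\bullet) \cong \Hom_A(H_n(P_\bullet), A)$ is finitely generated projective, proving $(1)$. The only subtle point is the repeated use of the fact that finite generation and projectivity are preserved under $\Hom_A(-,A)$ and under passage to direct summands, which is automatic here.
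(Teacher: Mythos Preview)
Your proof is correct and follows essentially the same approach as the paper. The paper is terser---it phrases both conditions as giving an exact sequence of finitely generated projectives (after noting that $H_n(P_\bullet)$ is forced to be finitely generated projective, which is exactly your inductive splitting argument) and then invokes that $\Hom_A(-,A)$ is a contravariant autoequivalence on finitely generated projectives---while you package the same idea via chain homotopy equivalences; but these are equivalent formulations of the same argument.
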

\begin{proof}
Condition (1) says that the sequence $0\ra P^0\ra \cdots \ra P^n\ra
H^n(P^\bullet)\ra 0$ is an exact sequence of finitely generated
projective modules. 
Condition (2) says that the sequence $0\ra H_n(P_\bullet)\ra P_n\ra
\cdots \ra P_0\ra 0$ is a exact sequence of modules, all of which are
finitely generated projective except perhaps for $H_n(P_\bullet)$; but
then it is 
straightforward to show that $H_n(P_\bullet)$ must be projective and
finitely generated  as well.  The result then follows from the fact
that $\Hom_A({-},A)$ is a contravariant autoequivalence of the
category of finitely generated projectives.
\end{proof}

Thus, it will suffice to prove that $H_jK_\bullet^r=0$ for $j\neq r$.
The remainder of the section is devoted to the proof of this
\eqref{prop:koszul-for-ss}.

\subsection{Bimodules and duals}
We establish some notation.  Fix a commutative ring $A$.  Given
an $A$-bimodule $M$, we write $M^*$ for the set $\Hom_A(M,A)$ of left
$A$-module homomorphisms.  Then $M^*$ admits the structure of an
$A$-bimodule, defined by
\[
(a\cdot \phi\cdot b)(m)=\phi(a\cdot m\cdot b)
\]
for $a,b\in A$, $m\in M$, $\phi\in M^*$.

Given $A$-bimodules $M$ and $N$, we define a map
\[
M^*\otimes_A N^* \ra (M\otimes_A N)^*
\]
of $A$-bimodules by 
\[
(\phi\otimes\psi)(m\otimes n) = \phi(m\cdot \psi(n)).
\]
(One must check that this is well-defined, and actually gives a map of
bimodules.  Note that $A$ is commutative; the formulas we use here
don't make sense for non-commutative $A$.)  

More generally, we obtain $A$-bimodule maps
\[
M_1^*\otimes_A\cdots\otimes_A M_q^* \ra (M_1\otimes_A \cdots \otimes_A
M_q)^* 
\]
by
\[
(\phi_1\otimes\cdots\otimes \phi_q)(m_1\otimes \cdots m_q) = \phi_1(
m_1\cdot \phi_2( m_2\cdot \cdots \phi_q(m_q))).
\]
We note that if each $M_i$ is finitely generated and free as a left
$A$-module, then so is $M_1\otimes_A \cdots \otimes_A M_q$, and the
above map is an isomorphism.   

\subsection{The ring $\Gamma$}

We will describe the dual complex $K^r_\bullet$ 
in terms of a certain graded associative ring
$\Gamma=\bigoplus_{r\geq0} \Gamma_r$.  This ring will contain $A=k\powser{x}$ (in
fact, $\Gamma_0=A$), but $A$ will \textbf{not} be central in
$\Gamma$.

We will regard each ring $A_r=\cS_{p^r}(E/S)$ as an $A$-bimodule, with
the left 
$A$-module structure coming from $s\colon A\ra A_r$, and the right
$A$-module structure coming from $t\colon A\ra A_r$.  With this
notation we have an isomorphism of $A$-bimodules
$A_{r_1,\dots,r_q}= A_{r_1}\otimes_A \cdots \otimes_A A_{r_q}$, and
each of the maps $u_i\colon A_{r_1,\dots,
  r_{i-1}+r_i,\dots,r_q}\ra A_{r_1,\dots,r_q}$ is thus an $A$-bimodule
homomorphism. 

Let $\Gamma_r=A_r^*$.  As we have observed, $\Gamma_r$ is naturally an
$A$-bimodule.  In terms of explicit power series, the bimodule
structure is defined by
\[
(f(x)\cdot \phi\cdot g(x))(h(x_0,x_1))=
\phi(f(x_0)h(x_0,x_1)g^{(p^r)}(x_1)).
\]
Observe that $\Gamma_0=A^*\approx A$, and that since $A_r$ is finitely
generated and free as a left $A$-module, so is $\Gamma_r$.

From the above remarks, we see that the $A$-bimodule isomorphism
$A_r\otimes_A A_{r'}\approx 
A_{r,r'}$ gives an isomorphism $\Gamma_r\otimes_A
\Gamma_{r'}\ra A_{r,r'}^*$.  

Define a product $\mu\colon  \Gamma_r\otimes_A \Gamma_{r'}\ra
\Gamma_{r+r'}$ by
\[
(\mu(\phi\otimes \psi))(g)= (\phi\otimes \psi)(u_1(g)),
\]
where $\phi\in \Gamma_r$, $\psi\in \Gamma_{r'}$, and $g\in A_{r+r'}$.
That is, $\mu$ is dual to the $A$-bimodule map $u_1\colon A_{r+r'}\ra
A_{r,r'}$. 
This makes $\Gamma=\bigoplus_r \Gamma_r$ into a graded
associative ring, which contains the ring $A=\Gamma_0$.
\begin{prop}\label{prop:gamma-gen-by-degree-1}
For all $r\geq 1$, the product map $\mu\colon \Gamma_1\otimes_A
\Gamma_{r-1}\ra \Gamma_r$ is surjective.  In particular, $\Gamma$ is
generated as a ring by $\Gamma_0$ and $\Gamma_1$.
\end{prop}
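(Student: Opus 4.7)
The plan is to reinterpret surjectivity of $\mu$ as the dual of a split injectivity statement, which is then delivered by Lemma \ref{lemma:socle-lemma}. Since $A_1$ and $A_{r-1}$ are each finitely generated and free as left $A$-modules (via the source map $s$), the canonical comparison map is an $A$-bimodule isomorphism
\[
\Gamma_1 \otimes_A \Gamma_{r-1} \xrightarrow{\sim} (A_1 \otimes_A A_{r-1})^{*} = A_{1,r-1}^{*},
\]
and under this identification the product $\mu$ is by construction the $A$-linear dual of the ring homomorphism $u_1 \colon A_r \to A_{1,r-1}$. Consequently $\mu$ is (split) surjective precisely when $u_1$ is split injective as a map of left $A$-modules, with the left $A$-action on both sides coming from $s$.

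This latter assertion is exactly Lemma \ref{lemma:socle-lemma}, read with its indexing parameter taken to be $r-1$: the homomorphism $u_1 \colon A_r \to A_{1,r-1}$ is the inclusion of an $A$-summand. Picking any such $A$-linear splitting and applying $\Hom_A({-},A)$ produces, via the duality isomorphism above, an $A$-linear section of $\mu$. The boundary case $r=1$ is trivial: then $\Gamma_{r-1} = \Gamma_0 = A$ and $\mu$ is just the right $A$-action on $\Gamma_1$.

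The ``in particular'' statement then follows by an immediate induction on $r$: granting that $\Gamma_{r-1}$ is generated as a ring by $\Gamma_0 \cup \Gamma_1$, surjectivity of $\mu \colon \Gamma_1 \otimes_A \Gamma_{r-1} \to \Gamma_r$ gives the same for $\Gamma_r$. The main obstacle is really the socle lemma itself, whose proof is the concrete calculation inside $k\powser{y,z}/(y^{1+p},F_{p^{r}}(y,z))$ carried out in \S\ref{subsec:explicit-description-complex-ss}; once that is in hand the present proposition is purely formal.
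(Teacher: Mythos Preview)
Your proof is correct and follows exactly the route the paper takes: the paper's entire argument is the one-line ``Immediate using \eqref{lemma:socle-lemma},'' and you have simply unpacked what ``immediate'' means---that $\mu$ is the left $A$-linear dual of $u_1\colon A_r\to A_{1,r-1}$, so split injectivity of the latter (the socle lemma with index $r-1$) gives surjectivity of the former, with the case $r=1$ trivial. One minor quibble: the socle lemma and its calculation appear in the ``Two useful lemmas'' subsection rather than in \S\ref{subsec:explicit-description-complex-ss} proper, but this does not affect the argument.
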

\begin{proof}
Immediate using \eqref{lemma:socle-lemma}.
\end{proof}

\subsection{The complex $K^r_\bullet$ is a bar resolution of $\Gamma$}

We thus have the following description of the complex $K^r_\bullet$.
\begin{prop}
For $r\geq1$, there are isomorphisms of $A$-modules
\[
K_q^r \approx \bigoplus_{r_1+\cdots+r_q=r} \Gamma_{r_1}\otimes_A
\cdots\otimes_A \Gamma_{r_q},
\]
where the sum is taken over tuples $(r_1,\dots,r_q)$ of positive
integers which sum to $r$.  With respect to these isomorphisms, the
boundary map $\partial \colon K_q^r\ra K_{q-1}^r$ is given by
\[
\partial (\phi_1\otimes\cdots \otimes \phi_r) = 
\sum_{i=1}^{q-1}(-1)^i
\phi_1\otimes\cdots \otimes \phi_i\phi_{i+1}\otimes \cdots \phi_q,
\]
where $\phi_1\otimes\cdots \otimes \phi_r\in \Gamma_{r_1}\otimes_A
\cdots \otimes_A \Gamma_{r_q}\subseteq K^r_q$, and $\phi_1\otimes\cdots \otimes
\phi_i\phi_{i+1}\otimes_A \cdots \otimes_A \phi_q\in
\Gamma_{r_1}\otimes_A \cdots \otimes_A \Gamma_{r_i+r_{i+1}}\otimes_A
\cdots \otimes_A \Gamma_{r_q}\subseteq K^r_{q-1}$.
\end{prop}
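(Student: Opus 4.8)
The plan is to unwind all the definitions and verify that the claimed isomorphism and boundary formula are just a restatement of how $K^r_\bullet$, $\Gamma$, and $\mu$ were set up. First I would recall that by definition $K^r_q = \Hom_A(\cK^q_{p^r}(E/S), A)$, and that
\[
\cK^q_{p^r}(E/S) = \prod_{r_1+\cdots+r_q=r,\ r_i>0} A_{r_1,\dots,r_q},
\]
a finite product of $A$-modules, where each $A_{r_1,\dots,r_q}$ is finitely generated projective (in fact free, after passing to the local situation) as a left $A$-module via $s$. Since $\Hom_A(-,A)$ turns finite products into finite direct sums, this gives $K^r_q \approx \bigoplus_{r_1+\cdots+r_q=r} A_{r_1,\dots,r_q}^*$. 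Then I would invoke the $A$-bimodule identification $A_{r_1,\dots,r_q}\approx A_{r_1}\otimes_A\cdots\otimes_A A_{r_q}$ established in \eqref{prop:description-of-complex-ss} together with the fact, noted in the subsection on bimodules and duals, that when each factor is finitely generated and free as a left $A$-module the natural map $\Gamma_{r_1}\otimes_A\cdots\otimes_A\Gamma_{r_q}\to (A_{r_1}\otimes_A\cdots\otimes_A A_{r_q})^*$ is an isomorphism. Composing these gives the asserted isomorphism $K^r_q\approx\bigoplus \Gamma_{r_1}\otimes_A\cdots\otimes_A\Gamma_{r_q}$.

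Next I would identify the boundary map. Dualizing the coboundary $\delta\colon \cK^{q-1}_{p^r}\to\cK^q_{p^r}$, whose component formula is
\[
(\delta f)_{r_1,\dots,r_q} = \sum_{i=1}^{q-1}(-1)^i u_i(f_{r_1,\dots,r_i+r_{i+1},\dots,r_q}),
\]
yields a boundary $\partial\colon K^r_q\to K^r_{q-1}$ which, on the summand indexed by $(r_1,\dots,r_q)$, is the alternating sum of the maps dual to $u_i\colon A_{r_1,\dots,r_i+r_{i+1},\dots,r_q}\to A_{r_1,\dots,r_q}$. Under the bimodule tensor identification, $u_i$ is $\mathrm{id}\otimes\cdots\otimes u_1\otimes\cdots\otimes\mathrm{id}$ where the middle $u_1\colon A_{r_i+r_{i+1}}\to A_{r_i,r_{i+1}}$ sits in the $i$th and $(i+1)$st slots; its dual is precisely $\mathrm{id}\otimes\cdots\otimes\mu\otimes\cdots\otimes\mathrm{id}$ by the very definition of the product $\mu$ on $\Gamma$, so $\partial$ acts on $\phi_1\otimes\cdots\otimes\phi_q$ by $\sum_i(-1)^i\phi_1\otimes\cdots\otimes\phi_i\phi_{i+1}\otimes\cdots\otimes\phi_q$, which is the claimed formula.

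The only genuinely non-formal point — and the step I expect to require the most care — is compatibility of the dual-of-tensor map with the dual-of-$u_i$ map, i.e.\ checking that the square
\[
\xymatrix{
\Gamma_{r_1}\otimes_A\cdots\otimes_A\Gamma_{r_i+r_{i+1}}\otimes_A\cdots \ar[r] \ar[d]
& \Gamma_{r_1}\otimes_A\cdots\otimes_A\Gamma_{r_i}\otimes_A\Gamma_{r_{i+1}}\otimes_A\cdots \ar[d]
\\
A_{r_1,\dots,r_i+r_{i+1},\dots}^* \ar[r]^-{u_i^*}
& A_{r_1,\dots,r_q}^*
}
\]
commutes, where the top arrow is $\mathrm{id}\otimes\mu\otimes\mathrm{id}$ and the vertical arrows are the canonical maps from the bimodules-and-duals subsection. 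This is a bookkeeping check with the explicit formula $(\phi_1\otimes\cdots\otimes\phi_q)(m_1\otimes\cdots\otimes m_q)=\phi_1(m_1\cdot\phi_2(m_2\cdots\phi_q(m_q)))$, using that $u_i$ is an $A$-bimodule map so that it commutes past the internal scalar multiplications, and that $\mu$ was defined precisely as the dual of $u_1$; commutativity of $A$ is what makes the internal twisting well-defined. Once this square is in hand, the proposition follows by assembling the pieces, and I would also note the $q=1$ case $K^r_1\approx\Gamma_r$ with trivial boundary for completeness.
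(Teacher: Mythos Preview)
Your proposal is correct and is exactly what the paper intends: the paper's own proof is the single word ``Immediate,'' because all of the ingredients you spell out (the finite product/sum duality, the tensor-dual isomorphism for finitely generated free left $A$-modules, and the definition of $\mu$ as the dual of $u_1$) were deliberately set up in the preceding two subsections precisely so that this proposition would be a tautology. Your careful verification of the compatibility square is the content hidden in ``Immediate''; it is a good exercise but not a different approach.
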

\begin{proof}
  Immediate.
\end{proof}

This amounts to saying that the complex $K_\bullet = \bigoplus_r
K_\bullet^r$ is 
isomorphic to the normalized bar complex
$\overline{\mathcal{B}}(A,\Gamma,A)$ of the augmented associative
ring $\Gamma$.

\subsection{Relations in $\Gamma$}
\label{subsec:relations-in-gamma}

We now describe certain elements $P_i$ in $\Gamma$, and certain
relations among them; below it will be shown that this gives a
presentation of $\Gamma$ in terms of generators and relations.

For $0\leq i\leq p$, let $P_i\in \Gamma_1$ denote the element defined
by
\[
P_i(x_1^j) = 0\quad\text{if $i\neq j$}, \qquad P_i(x_1^i)=1,
\]
where $0\leq j\leq p$.
That is, $P_0,\dots,P_p$ is a left $A$-module basis of $\Gamma_1$,
dual to the monomial left $A$-module basis $1,x_1,\dots,x_1^p$ of
$A_1=k\powser{x_0,x_1}/(F_p(x_0,x_1))$.  

The right $A$-module structure on $\Gamma_1$ may be described as
follows.  A straightforward calculation shows that
for $c\in k$,
\begin{equation}\label{eq:gamma-formula-1}
P_i c = c^p P_i,
\end{equation}
and
for the generator $x\in k\powser{x}=A$, we have
\begin{equation}\label{eq:gamma-formula-2}
\begin{aligned}
    P_0 x &= -x^{p+1}P_p,
\\
    P_1 x &= P_0+xP_p,
\\
    P_i x &= P_{i-1} \qquad \text{(if $1<i<p$),}
\\
    P_p x &= P_{p-1}+x^pP_p.
  \end{aligned}
\end{equation}
(This amounts to the identity $x_1^{p+1}=-x_0^{p+1}+x_0x_1+x_0^px_1^p$ in
$A_1$.)

Observe that these imply that for each $i=0,\dots,p$, we have $P_ix^{p+1}=xQ_i$ for some element
$Q_i\in \Gamma_1$.  Thus, the above identities determine the structure of
$\Gamma_1$ as a right $A$-module; for, if $f(x)\in k\powser{x}$ is a
limit of a sequence of polynomials $f_n(x)$, we see that $P_if(x)$ is
the limit  as $n\to\infty$ of the sequence
$\{P_if_n(x)\}$ with respect to the $x$-adic topology.

The exact sequence of \eqref{lemma:relations} gives rise, on taking
duals, to a short exact sequence
\[
0\ra (A_1/s(A))^* \xra{\bar{v}^*} \Gamma_{1}\otimes_A \Gamma_1
\xra{\mu} \Gamma_2\ra 0. 
\]
The natural inclusion $(A_1/s(A))^*\subset A_1^*\approx \Gamma_1$
identifies $(A_1/s(A))^*$ with the left sub-$A$-module of $\Gamma_1$
spanned by $P_1,\dots,P_p$, and a straightforward calculation shows
that $\bar{v}^*(P_i) = \sum_{j=0}^p x^j\,P_i\otimes P_j\in
\Gamma_1\otimes_A \Gamma_1$.  That is,
the identity 
\begin{equation}\label{eq:gamma-formula-3}
P_iP_0+x\,P_iP_1+\cdots+x^p\,P_iP_p=0
\end{equation}
holds in the ring $\Gamma$ for each $i=1,\dots,p$.  (It does
\emph{not} hold for $i=0$.)

\subsection{The structure of $\Gamma$}
\label{subsec:structure-of-gamma}

Let $T\Gamma_1=\bigoplus_{r\geq0} \underbrace{\Gamma_1\otimes_A \cdots
  \otimes_A \Gamma_1}_{\text{$r$ factors}}$ denote the tensor algebra
on the $A$-bimodule $\Gamma_1$.  Let $\Delta=T\Gamma_1/J$, where $J$
is the two-sided ideal generated by $\sum_{j=0}^p x^j\,P_iP_j$ for
$i=1,\dots,p$.
Observe that since $J$ is generated by homogeneous elements, we have
$\Delta\approx \bigoplus_{r\geq0}\Delta_r$ where $\Delta_r$ is an
$A$-bimodule quotient of $\Gamma_1^{\otimes_A r}$.

A \dfn{sequence} will be a list $I=(i_1,\dots,i_r)$, of length $r\geq0$,
of elements of $\{0,1,\dots,p\}$.  We say such a sequence is
\dfn{inadmissible} if there exists a $k$ such that $i_k\neq0$ and
$i_{k+1}=0$; otherwise, it is \dfn{admissible}.  Thus, a sequence is
admissible precisely if all zeros in $I$ appear at the beginning of
the sequence.  

Given a sequence
$I=(i_1,\dots,i_r)$, 
we write $P_I=P_{i_1}\cdots P_{i_r}\in \Delta_r$.   

\begin{prop}  Let $r\geq1$.
\begin{enumerate}
\item [(i)] We have that
\[
\Delta_r = A\,P_0^r + \sum_{i=1}^p \Delta_{r-1}\,P_i.
\]
\item[(ii)]
As a left $A$-module $\Delta_r$ is spanned by the
elements $P_I$ where $I$ is admissible of length $r$.
\end{enumerate}
\end{prop}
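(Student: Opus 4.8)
The plan is to prove (i) by induction on $r$, and then to deduce (ii) from (i) by a second, easier induction. Throughout, ``span'' will mean left-$A$-linear span; I will use freely that each $\Delta_{r-1}P_i\subseteq\Delta_r$ is a \emph{left} $A$-submodule (left multiplication by $A$ commutes with right multiplication by the fixed element $P_i\in\Gamma_1$), and that each $\Delta_s$ is stable under the right $A$-action, being an $A$-bimodule quotient of $\Gamma_1^{\otimes_A s}$. The one preliminary observation I would record first is the crude decomposition
\[
\Delta_r=\Delta_{r-1}P_0+\Delta_{r-1}P_1+\cdots+\Delta_{r-1}P_p ,
\]
valid for $r\geq1$: indeed $\Delta$ is a quotient of the tensor algebra $T\Gamma_1$, so the multiplication $\Delta_{r-1}\otimes_A\Gamma_1\to\Delta_r$ is onto, and $\Gamma_1=AP_0+\cdots+AP_p$ as a left $A$-module, so $\Delta_{r-1}\cdot\Gamma_1=\sum_i(\Delta_{r-1}A)P_i=\sum_i\Delta_{r-1}P_i$. (This is essentially \eqref{prop:gamma-gen-by-degree-1}, transported to $\Delta$, where it is automatic.)

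For (i), the base case $r=1$ is immediate, as $\Delta_1=\Gamma_1=AP_0+\sum_{i=1}^pAP_i=AP_0^1+\sum_{i=1}^p\Delta_0P_i$. For the inductive step ($r\geq2$), the inclusion $AP_0^r+\sum_{i=1}^p\Delta_{r-1}P_i\subseteq\Delta_r$ is clear, so by the preliminary decomposition it suffices to show $\Delta_{r-1}P_0\subseteq AP_0^r+\sum_{i=1}^p\Delta_{r-1}P_i$. I would apply the inductive hypothesis to $\Delta_{r-1}$ and right-multiply by $P_0$, obtaining
\[
\Delta_{r-1}P_0=AP_0^{r-1}P_0+\sum_{i=1}^p\Delta_{r-2}P_iP_0=AP_0^r+\sum_{i=1}^p\Delta_{r-2}P_iP_0 .
\]
For $1\le i\le p$ the relation \eqref{eq:gamma-formula-3} reads $P_iP_0=-\sum_{j=1}^px^j\,P_iP_j$ in $\Delta$, so
\[
\Delta_{r-2}P_iP_0=-\sum_{j=1}^p\Delta_{r-2}\,x^j\,P_iP_j\subseteq\sum_{j=1}^p\Delta_{r-2}P_iP_j\subseteq\sum_{j=1}^p\Delta_{r-1}P_j ,
\]
using right-$A$-stability of $\Delta_{r-2}$ and $\Delta_{r-2}P_i\subseteq\Delta_{r-1}$. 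This closes the induction and proves (i).

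For (ii), I would induct on $r$, the case $r=1$ being trivial since every length-$1$ sequence is admissible. For $r\geq2$, part (i) gives $\Delta_r=AP_0^r+\sum_{i=1}^p\Delta_{r-1}P_i$; the sequence $0^r$ is admissible, so $AP_0^r$ is spanned by admissible monomials, and by the inductive hypothesis $\Delta_{r-1}$ is spanned by the $P_J$ with $J$ admissible of length $r-1$, so $\Delta_{r-1}P_i$ is spanned by the monomials $P_JP_i=P_{(J,i)}$. For $1\le i\le p$ the sequence $(J,i)$ is again admissible, since appending a nonzero entry to an admissible sequence leaves all of its zeros at the front. Hence $\Delta_r$ is spanned, as a left $A$-module, by the $P_I$ with $I$ admissible of length $r$.

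The step I expect to be the only real obstacle is the bookkeeping in the inductive step of (i): the naive approach of reducing a single monomial $P_I$ by tracking its trailing zeros is awkward, because \eqref{eq:gamma-formula-3} reintroduces interior scalars; routing the argument through the inductive hypothesis of (i) itself avoids this, provided one keeps the left and right $A$-module structures carefully separated when passing $x^j$ across the left factor lying in $\Delta_{r-2}$. Everything else is formal.
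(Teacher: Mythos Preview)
Your proof is correct and follows essentially the same route as the paper's: the induction for (i) uses the crude decomposition $\Delta_r=\sum_{i=0}^p\Delta_{r-1}P_i$, applies the inductive hypothesis to the factor ending in $P_0$, and absorbs the resulting $\Delta_{r-2}P_iP_0$ terms via the relation \eqref{eq:gamma-formula-3} and right-$A$-stability of $\Delta_{r-2}$; part (ii) is then the straightforward induction you give, which the paper leaves implicit. The only difference is an index shift (you prove the case $r$ from $r-1$, the paper proves $r+1$ from $r$).
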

\begin{proof}
We  prove (i) by induction on $r$; it is immediate for $r=1$.
Assuming $\Delta_r=A\,P_0^r +\sum_{i=1}^p \Delta_{r-1}P_i$, we have that
\begin{align*}
\Delta_{r+1} &= \Delta_r\,P_0+ \sum_{j=1}^p \Delta_r\, P_j
\\
&= A\,P_0^{r+1}+\sum_{i=1}^p \Delta_{r-1}\,P_iP_0 + \sum_{j=1}^p
\Delta_r\, P_j &\text{by induction,}
\\
&\subseteq A\,P_0^{r+1} + \sum_{i=1}^p \sum_{j=1}^p
\Delta_{r-1}(-x^i\,P_i)P_j + \sum_{j=1}^p\Delta_r\,P_j,
\end{align*}
which is contained in $A\,P_0^{r+1}+ \sum_{j=1}^p\Delta_r\,P_j$.

Statement (ii) follows from (i) and induction on $r$.
\end{proof}

Let $\zeta\colon \Delta\ra \Gamma$ denote the evident map of
graded associative rings, induced by sending $\Delta_1\subset \Delta$
identically 
to $\Gamma_1\subset \Gamma$; it exists according to the discussion of
\S\ref{subsec:relations-in-gamma}.   We write $\zeta_r\colon
\Delta_r\ra\Gamma_r$ for the restriction of $\zeta$ to the $r$th grading.
\begin{prop}
The map $\zeta\colon \Delta\ra \Gamma$ is an isomorphism.
\end{prop}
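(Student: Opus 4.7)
The plan is to deduce the isomorphism from a rank count plus Nakayama-style linear algebra, following the standard pattern for PBW-type results.

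First I would observe that $\zeta$ is already known to be surjective. Indeed, by \eqref{prop:gamma-gen-by-degree-1} the ring $\Gamma$ is generated over $\Gamma_0 = A$ by $\Gamma_1$, and $\zeta$ is by construction the identity on degrees $0$ and $1$ and a ring homomorphism; so each $\zeta_r\colon \Delta_r\to\Gamma_r$ is surjective.

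Next I would count admissible sequences. A length-$r$ admissible sequence is a word $(0,\dots,0,i_{k+1},\dots,i_r)$ with $0\le k\le r$ and $i_j\in\{1,\dots,p\}$ for $j>k$, giving $p^{r-k}$ possibilities for each $k$, hence
\[
\#\{\text{admissible sequences of length } r\} \;=\; \sum_{k=0}^r p^{r-k} \;=\; 1+p+\cdots+p^r \;=\; \sigma(p^r).
\]
By the previous proposition, the $\sigma(p^r)$ admissible monomials $\{P_I\}$ span $\Delta_r$ as a left $A$-module. On the other hand, $\Gamma_r = A_r^*$ is free of rank $\sigma(p^r)$ as a left $A$-module, because $A_r$ is (via $s\colon A\to A_r$), with the explicit basis $1,x_1,\dots,x_1^{\,\sigma(p^r)-1}$ dualizing to $P_0,P_1,\dots$ in the $r=1$ case and more generally by the bimodule identification $A_{r_1,\dots,r_q}\cong A_{r_1}\otimes_A\cdots\otimes_A A_{r_q}$.

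Now I would close the argument by a rank-count lemma: if $A$ is any commutative ring, $M$ is a left $A$-module spanned by $n$ elements $v_1,\dots,v_n$, $N$ is a free left $A$-module of rank $n$, and $\varphi\colon M\twoheadrightarrow N$ is a surjection, then $\varphi$ is an isomorphism. (The images $\varphi(v_i)$ span the free module $N$ of rank $n$, so they are a basis; any relation $\sum a_i v_i = 0$ in $M$ maps to $\sum a_i \varphi(v_i)=0$ in $N$, forcing $a_i=0$.) Applying this to $\varphi = \zeta_r$ with $v_i$ the admissible monomials, we get that $\zeta_r$ is an isomorphism for each $r$, and hence $\zeta$ is too.

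The only step requiring care is the spanning result for $\Delta_r$, but that is already the proposition preceding the statement; apart from that, the argument is entirely formal. There is no real obstacle here: once the relations \eqref{eq:gamma-formula-1}--\eqref{eq:gamma-formula-3} have been identified and the admissible monomials shown to span, the fact that the counts $\sigma(p^r)$ match on both sides collapses the question to the observation that a surjection from a finitely-generated module onto a free module of the same minimal number of generators is forced to be an isomorphism.
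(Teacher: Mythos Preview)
Your proof is correct and follows essentially the same route as the paper: establish surjectivity of each $\zeta_r$ (you do this directly from \eqref{prop:gamma-gen-by-degree-1}, while the paper phrases it as an induction via the square $\Delta_1\otimes_A\Delta_{r-1}\to\Gamma_1\otimes_A\Gamma_{r-1}$), then use that $\Delta_r$ is spanned by the $1+p+\cdots+p^r$ admissible monomials while $\Gamma_r$ is free of that rank to conclude. Your explicit ``rank-count lemma'' just makes precise the step the paper leaves implicit in its final sentence.
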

\begin{proof}
We show that $\zeta_r$ is an isomorphism, by induction on $r$.
It is clear that $\zeta_0$ and $\zeta_1$ are isomorphisms.  In the
commutative diagram
\[\xymatrix{
{\Delta_1\otimes_A \Delta_{r-1}} \ar[r]^{\zeta_1\otimes \zeta_{r-1}}
\ar[d]_{\mu_\Delta} & {\Gamma_1\otimes_A 
  \Gamma_{r-1}} \ar[d]^{\mu_\Gamma}
\\
{\Delta_r} \ar[r]_{\zeta_r} & {\Gamma_r}
}\]
the map $\mu_\Delta$ is surjective by construction, $\mu_\Gamma$ is
surjective by \eqref{prop:gamma-gen-by-degree-1}, and
$\zeta_1\otimes\zeta_{r-1}$ is an 
isomorphism by induction.  Therefore $\zeta_r$ is surjective.  We also
know that $\Gamma_r$ is free as a left $A$-module on
$1+p+\cdots+p^r$ generators, while $\Delta_r$ is generated as a
left $A$-module by
$1+p+\cdots+p^r$ elements (the admissible monomials of length
$r$).  Thus $\zeta_r$ is an isomorphism.
\end{proof}

\subsection{The monomial filtration on $K^r_\bullet$}

Given sequences $I$ and $J$, we write $IJ$ for their concatenation.
We define a linear ordering on the set of sequences of length $r$ as
follows.  We say that $I<J$ if, on writing $I=I'(i)$ and $J=J'(j)$, we
have either (1) $i>j$, or (2) $i=j$ and $I'<J'$.

Thus, sequences of length $1$ are ordered:
\[
(p)<(p-1)<\cdots<(1)<(0).
\]
Sequences of length $2$ are ordered:
\[
(p,p)<\cdots<(0,p)<(p,p-1)<\cdots<(1,1)<(0,1)<(p,0)<\cdots<(1,0)<(0,0).
\]

We write 
\[
\Gamma_{r_1,\dots,r_q} \defeq \Gamma_{r_1}\otimes_A \cdots \otimes_A
\Gamma_{r_q}.
\]
For a sequence $I$ of length $r=\sum_{i=1}^qr_i$, 
let $\cF_I\Gamma_{r_1,\dots,r_q}$ denote the left $A$-submodule of
$\Gamma_{r_1,\dots,r_q}$ spanned 
by elements of the form $P_{I_1}\otimes\cdots \otimes P_{I_q}$, where
$I_1I_2\cdots I_q\leq I$.  Thus we obtain a filtration with
$\cF_I\Gamma_{r_1,\dots,r_q}\subseteq \cF_J\Gamma_{r_1,\dots,r_q}$
when $I\leq J$.  Observe that $\cF_{(0,\dots,0)}\Gamma_{r_1,\dots,r_q}
= \Gamma_{r_1,\dots,r_q}$.

We write $\cF_{<I}\Gamma_{r_1,\dots,r_q}$ for the left $A$-submodule
spanned by elements $P_{I_1}\otimes\cdots\otimes P_{I_q}$ where
$I_1I_2\cdots I_q<I$.  We write $\gr_I\Gamma_{r_1,\dots,r_q}=
\cF_{I}\Gamma_{r_1,\dots,r_q}/\cF_{<I}\Gamma_{r_1,\dots,r_q}$.

\begin{prop}
Let $\mu_i\colon \Gamma_{r_1,\dots,r_q}\ra
\Gamma_{r_1,\dots,r_{i-1}+r_i,\dots,r_q}$ be the map induced by
multiplication $\Gamma_{r_{i-1}}\otimes_A \Gamma_{r_i}\ra
\Gamma_{r_{i-1}+r_i}$.  Then for any sequence $I$ of length
$r=r_1+\cdots+r_q$, we have that
\[
\mu_i(\cF_I\Gamma_{r_1,\dots,r_q})\subseteq
\cF_{I}\Gamma_{r_1,\dots,r_{i-1}+r_i,\dots, r_q},\qquad
\mu_i(\cF_{<I}\Gamma_{r_1,\dots,r_q})\subseteq
\cF_{<I}\Gamma_{r_1,\dots,r_{i-1}+r_i,\dots, r_q}.
\]
\end{prop}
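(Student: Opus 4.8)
The plan is simply to unwind the definitions: once one has in hand the correct list of left $A$-module generators of each $\cF_I\Gamma_{r_1,\dots,r_q}$, the statement becomes essentially formal.

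First I would note that $\mu_i$ is a homomorphism of left $A$-modules. Indeed, the multiplication $\Gamma_{r_{i-1}}\otimes_A\Gamma_{r_i}\to\Gamma_{r_{i-1}+r_i}$ is a map of $A$-bimodules (it is part of the ring structure of $\Gamma$), and $\mu_i$ is obtained from it by tensoring over $A$ with identity maps on the remaining factors. It therefore suffices to verify the two inclusions on the left $A$-module generators of $\cF_I\Gamma_{r_1,\dots,r_q}$, namely the pure tensors $P_{I_1}\otimes\cdots\otimes P_{I_q}$ with $|I_j|=r_j$ and $I_1\cdots I_q\leq I$ (here $P_{I_j}\in\Gamma_{r_j}$ denotes the product of the degree-one elements indexed by the entries of $I_j$). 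That such tensors do span $\Gamma_{r_1,\dots,r_q}$ as a left $A$-module — the one place where the non-centrality of $A$ in $\Gamma$ enters — is already recorded in the remark that $\cF_{(0,\dots,0)}\Gamma_{r_1,\dots,r_q}=\Gamma_{r_1,\dots,r_q}$; it follows by repeatedly rewriting right actions $P_K\cdot a$ as left $A$-combinations of the $P_M$ and pulling left scalars to the front of the tensor.

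Next I would compute $\mu_i$ on such a generator. By construction $\mu_i$ leaves all factors alone except that it replaces the adjacent pair $P_{I_{i-1}}\otimes P_{I_i}$ by its product in $\Gamma_{r_{i-1}+r_i}$; since $\Gamma$ is associative and $P_{I_{i-1}}$ and $P_{I_i}$ are themselves products of degree-one generators, this product is $P_{I_{i-1}I_i}$, the element indexed by the concatenated sequence $I_{i-1}I_i$ (of length $r_{i-1}+r_i$). Thus
\[
\mu_i\bigl(P_{I_1}\otimes\cdots\otimes P_{I_q}\bigr)
= P_{I_1}\otimes\cdots\otimes P_{I_{i-2}}\otimes P_{I_{i-1}I_i}\otimes P_{I_{i+1}}\otimes\cdots\otimes P_{I_q},
\]
which is precisely one of the generators used to define the filtration on $\Gamma_{r_1,\dots,r_{i-1}+r_i,\dots,r_q}$, the one attached to the concatenated sequence $I_1\cdots I_{i-2}(I_{i-1}I_i)I_{i+1}\cdots I_q=I_1\cdots I_q$. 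Since $I_1\cdots I_q\leq I$, this generator lies in $\cF_I\Gamma_{r_1,\dots,r_{i-1}+r_i,\dots,r_q}$, which gives the first inclusion; the identical computation with $<$ in place of $\leq$ gives the second.

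There is really no obstacle here: the whole content is the bookkeeping above, and the only facts used are that $\mu_i$ is left $A$-linear (so that checking on generators suffices) and that associativity of $\Gamma$ makes $P_{I_{i-1}}P_{I_i}$ equal to the element indexed by the concatenation $I_{i-1}I_i$ — both immediate from the construction of $\Gamma$ (equivalently, of $\Delta$). In particular one does not even need the monotonicity of the order under concatenation, since $\mu_i$ leaves the total concatenated index unchanged.
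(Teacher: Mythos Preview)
Your proof is correct and follows exactly the same approach as the paper's proof, which is simply the two-sentence observation that the filtrations are defined as left $A$-spans of certain monomial elements and that $\mu_i$ is left $A$-linear and preserves these spanning sets. You have spelled out the bookkeeping in more detail, but the content is identical.
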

\begin{proof}
The filtrations are defined as the left $A$-modules spanned by certain
monomial elements.  The map $\mu_i$ is left $A$-linear and preserves
the spanning sets.
\end{proof}

\textbf{Warning.}  This filtration does \textbf{not} make $\Gamma$ into a
filtered ring.  That is, we do not generally have $\cF_I\Gamma_r\cdot
\cF_{I'}\Gamma_{r'} \subseteq \cF_{II'}\Gamma_{r+r'}$, since the
subobjects $\cF_I\Gamma_r$ are not \emph{right} $A$-submodules.

Now we define 
\[
\cF_I K_q^r \defeq \bigoplus_{r_1+\cdots+r_q=r}
\cF_I\Gamma_{r_1,\dots,r_q} \subseteq K_q^r.
\]
The above proposition implies that $\cF_IK_\bullet^r$ is a subcomplex
of $K_\bullet^r$.  Note further that
\[
\gr_I K_q^r \defeq \cF_I K_q^r/\cF_{<I}K_q^r \approx
\bigoplus_{r_1+\cdots+r_q=r} \gr_I 
\Gamma_{r_1,\dots,r_q}.
\]

The next proposition shows that the associated gradeds $\gr_I K_q^r$
are free modules, with basis given by elements $P_{I_1}\otimes \cdots
\otimes P_{I_q}$ where $I=I_1\cdots I_q$ with each $I_1,\dots,I_q$
admissible. 
\begin{prop}\label{prop:ss-filtration-lemma}
Let $I_1,\dots,I_q$ be sequences of length $r_1,\dots,r_q$
respectively, and let $I=I_1\cdots I_q$.
\begin{enumerate}
\item If at least one of $I_1,\dots,I_q$ is inadmissible, then $\gr_I\Gamma_{r_1\,\dots,r_q}=0$.
\item If all $I_1,\dots,I_q$ are admissible, then
  $\gr_I\Gamma_{r_1,\dots,r_q}$ is a free left $A$-module on one
  generator corresponding to $P_{I_1}\otimes\cdots\otimes P_{I_q}$.
\end{enumerate}
\end{prop}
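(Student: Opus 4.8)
The plan is to work one grading at a time, reducing everything to the structure of $\Gamma_{r_1,\dots,r_q}=\Gamma_{r_1}\otimes_A\cdots\otimes_A\Gamma_{r_q}$ as a left $A$-module, together with the filtration by the ordered set of sequences of length $r$. First I would handle a single factor. For a sequence $I_j$ of length $r_j$, the element $P_{I_j}\in\Gamma_{r_j}$ is defined (via the isomorphism $\zeta\colon\Delta\to\Gamma$ of the previous proposition) as a product $P_{i_1}\cdots P_{i_{r_j}}$ in $\Gamma$. The key computational input is the right-module formula \eqref{eq:gamma-formula-2}, which lets one rewrite $P_i\cdot x$, hence $P_I\cdot f(x)$ for any $f\in A$, as a left $A$-combination of various $P_{I'}$'s. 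The essential observation — which I would establish as a sublemma — is that if $J$ is \emph{admissible} then $P_J\cdot f(x)=\sum c_{J'}(x)\,P_{J'}$ where every $J'$ occurring satisfies $J'\le J$, and the coefficient of $P_J$ itself is a unit (in fact $f(0)^{p^{r_j}}$, reading off the leading term of \eqref{eq:gamma-formula-2} and \eqref{eq:gamma-formula-1}), while if $J$ is inadmissible then $P_J$ is \emph{already} a left $A$-combination of strictly smaller sequences — this is exactly the content of part (i) of the proposition on $\Delta_r$ (applied inside one factor), since the relation $\sum_j x^j P_i P_j=0$ rewrites any $P_J$ containing a pattern ``nonzero then zero'' in terms of $P_{J'}$ with larger final entry, i.e. smaller $J'$.

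With that sublemma in hand, statement (1) is almost immediate: if some $I_\ell$ is inadmissible, then inside the $\ell$-th tensor factor $P_{I_\ell}$ is a left $A$-combination of $P_{I_\ell'}$ with $I_\ell'<I_\ell$ in the length-$r_\ell$ ordering; tensoring, and using that the ordering on sequences of length $r$ respects concatenation in the first variable (this is the defining clause $I<J$ iff $i>j$, or $i=j$ and $I'<J'$, so $I_1\cdots I_\ell'\cdots I_q < I_1\cdots I_\ell\cdots I_q$ when $I_\ell'<I_\ell$), every generator $P_{I_1}\otimes\cdots\otimes P_{I_q}$ with $I_1\cdots I_q=I$ lies in $\cF_{<I}$. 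Hence $\cF_I\Gamma_{r_1,\dots,r_q}=\cF_{<I}\Gamma_{r_1,\dots,r_q}$ and $\gr_I=0$.

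For statement (2), suppose all $I_j$ are admissible. I would argue that $\gr_I\Gamma_{r_1,\dots,r_q}$ is generated over $A$ by the single class $[P_{I_1}\otimes\cdots\otimes P_{I_q}]$: any monomial generator $P_{J_1}\otimes\cdots\otimes P_{J_q}$ of $\cF_I$ has $J_1\cdots J_q\le I$ by definition, and if it is $<I$ it lies in $\cF_{<I}$ and dies; so only the $J_j=I_j$ generator survives, and moving a left coefficient $f(x)$ past tensor signs using the right-module sublemma changes $P_{J_1}\otimes\cdots\otimes P_{J_q}\cdot f$ into $f(0)^{p^r}P_{I_1}\otimes\cdots\otimes P_{I_q}$ plus strictly smaller terms. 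For freeness, I would produce a left $A$-linear splitting: since each $\Gamma_{r_j}=A_{r_j}^*$ is free over $A$ on the dual-monomial basis $\{P_K : K \text{ admissible of length } r_j\}$ — this is exactly the computation of $\Gamma_{r_j}$ via $\zeta$, where $\Delta_{r_j}$ is free of rank $1+p+\cdots+p^{r_j}$ on admissible monomials — the tensor product $\Gamma_{r_1,\dots,r_q}$ is free on the $P_{K_1}\otimes\cdots\otimes P_{K_q}$ with each $K_j$ admissible, and the filtration on this free module is precisely the one spanned by sub-bases determined by the ordering of the concatenations $K_1\cdots K_q$. For a \emph{free} module with a filtration by such ``initial segments'' of a totally ordered basis, each graded piece $\gr_I$ is free of rank equal to the number of basis elements whose index equals $I$; and when all $I_j$ are admissible there is exactly one way to write $I=K_1\cdots K_q$ with the $K_j$ of the prescribed lengths $r_j$ and all admissible (namely $K_j=I_j$), because admissibility of $I_1\cdots I_q$ is \emph{not} required — one must check that any other segmentation with admissible pieces forces $K_1\cdots K_q\ne I$ or repeats, which follows since the positions of the zeros in $I$ are fixed and each admissible piece must have all its zeros at its front. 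So $\gr_I\Gamma_{r_1,\dots,r_q}$ is free on one generator, as claimed.

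The main obstacle I expect is the bookkeeping in the right-module sublemma: verifying, using \eqref{eq:gamma-formula-1} and \eqref{eq:gamma-formula-2} repeatedly, that multiplying an admissible $P_I$ on the right by an arbitrary power series $f(x)$ produces only sequences $\le I$ with the expected unit leading coefficient, and dually that inadmissible $P_I$ expands strictly downward. This is where the specific form of the ordering ($i>j$ giving $I<J$, so that ``raising the last index'' means ``going down'') and the specific shape of the relations \eqref{eq:gamma-formula-2} (e.g. $P_1x=P_0+xP_p$, which replaces a last entry $1$ by a combination of last entries $0$ and $p$ — note $0$ is the top of the order, so this is $\cF_{<}$ relative to the sequence ending in $1$... wait, $(0)>(1)$, so $P_0$ is \emph{larger}) must be tracked very carefully. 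Concretely one checks that each relation and each instance of \eqref{eq:gamma-formula-2}, read in $\Delta$, moves a monomial into the span of ones that are $\le$ it, with equality only for the unit term; the $x$-adic continuity remark already in \S\ref{subsec:relations-in-gamma} then upgrades this from polynomials to all of $A$.
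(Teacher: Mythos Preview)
Your approach is essentially the paper's, but you have tangled yourself up with the right-module ``sublemma.'' The admissible half of it is false, exactly as you catch at the end: $P_1x=P_0+xP_p$ produces the strictly \emph{larger} index $(0)$. Fortunately you never actually need that half, and the paper's proof shows why.

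For part (1) the paper applies the relation $P_iP_0=-\sum_{j\ge1}x^jP_iP_j$ at an inadmissible spot, just as you do, but then makes one clean observation that bypasses any sublemma: the ordering is read \emph{from the right}, so once the $0$ at that position has been replaced by some $j\neq0$ while the tail $I_k''I_{k+1}\cdots I_q$ is unchanged, the new concatenated sequence is strictly smaller than $I$ \emph{regardless of what the prefix is}. Hence one simply absorbs $x^j$ into the prefix $P_{I_1}\otimes\cdots\otimes P_{I_k'}$ and rewrites that as an arbitrary left $A$-combination of monomials with no control whatsoever on the indices appearing there. There is no need to work inside a single factor first, and no need to know how right multiplication by $A$ moves admissible monomials in the filtration.

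For part (2) your basis argument is the paper's argument: $\gr_I$ is cyclic by definition; the admissible tensors $P_{K_1}\otimes\cdots\otimes P_{K_q}$ span $\Gamma_{r_1,\dots,r_q}$ by part (1), and a rank count shows they are a left $A$-basis, whence each $\gr_I$ is free of rank one when all $I_j$ are admissible. The sentence about ``moving a left coefficient $f(x)$ past tensor signs using the right-module sublemma'' should be dropped---it invokes the false half and is not needed. Your worry about ``any other segmentation'' is also misplaced: the lengths $r_1,\dots,r_q$ are fixed, so there is exactly one way to cut $I$.
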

\begin{proof}
First note that by definition $\gr_I=\gr_I\Gamma_{r_1,\dots,r_q}$ is
always a cyclic 
$A$-module, generated by the image of
$P_{I_1}\otimes\cdots\otimes P_{I_q}$.

We prove (1).  Suppose that $I_k$ is inadmissible.  Then we may write
$I_k=I_k'(i,0)I_k''$, where $I_k'$ and $I_k''$ are two (possibly
empty) sequences, and $i\neq0$.  We have that
\begin{align*}
P_{I_1}\otimes\cdots \otimes P_{I_q} &= P_{I_1}\otimes\cdots \otimes
P_{I_k'}P_iP_0 P_{I_k''}\otimes\cdots\otimes P_{I_q}
\\
&= \sum_{j=1}^p P_{I_1}\otimes\cdots \otimes
P_{I_k'}(-x^j)P_iP_jP_{I_k''} \otimes \cdots \otimes P_{I_q}.
\end{align*}
For any $a\in A$, the element $P_{I_1}\otimes\cdots\otimes P_{I_k'}a$ is
in $\Gamma_{r_1,\dots,r_k'}$ (where $r_k'$ is the length of $I_k'$),
and so is a left $A$-linear combination of monomials of the form
$P_{J_1}\otimes\cdots \otimes P_{J_k}$.

Thus, $P_{I_1}\otimes\cdots\otimes P_{I_q}$ is a left $A$-linear
combination of monomials of the form $P_{J_1}\otimes\cdots
P_{J_k}P_iP_jP_{I_k''}\otimes\cdots \otimes P_{I_q}$ with $j\neq0$.
Since 
\[
J_1\cdots J_k(i,j)I_k''\cdots I_q< I_1\cdots
I_{k'}(i,0)I_{k''}\cdots I_q,
\] 
it follows that
$\gr_I\Gamma_{r_1,\dots,r_q}=0$, proving (1).

To prove (2), observe that from (1) we may conclude that
$\Gamma_{r_1,\dots,r_q}$ is spanned as a left $A$-module by elements
of the form $P_{I_1}\otimes \cdots \otimes P_{I_q}$ with
$I_1,\dots,I_q$ admissible.  Since $\Gamma_{r_1,\dots,r_q}$ is a free
left $A$-module, with rank equal to the number of such collections of
admissible sequences, the result follows.
\end{proof}

Given an abstract simplicial complex $X$ with some chosen ordering of
its vertices, let $C_\bullet(X)$ denote the
chain complex associated to $X$, and let $\widetilde{C}_\bullet(X)$ denote
the mapping fiber of the augmentation map $C_\bullet(X)\ra \Z$, where
$\Z$ is viewed as a chain complex concentrated in degree $0$.  Thus, 
$\widetilde{C}_q(X)$ is the free abelian group on the $q$-simplices of $X$
for $q\geq0$, and $\widetilde{C}_{-1}(X)=\Z$.

Let $\Delta^n$ denote the $n$-simplex viewed as a simplicial complex.
The vertices of $\Delta^n$ are elements of $S=\{1,\dots,n+1\}$, and a
$q$-simplex of $\Delta^n$ is a subset of size $q+1$ of $S$.  Observe
that $\Delta^{-1}$ is a simplicial complex whose realization is the
empty space.

The following is elementary and standard; it amounts to the fact that
the quotient $\len{\Delta^n}/\len{Y}$ of the $n$-simplex by a
subcomplex which is a union of codimension $1$ faces is either
contractible or  homeomorphic to a sphere.
\begin{prop}
If $\Delta^n$ is the $n$-simplex viewed as a simplicial complex, and
if $Y_1,\dots,Y_d\subset \Delta^n$ is a 
(possibly empty) collection of distinct codimension $1$ faces of $\Delta^n$,
then
\[
  H_q \left[\widetilde{C}_\bullet(\Delta^n)/\sum_{i=1}^d
    \widetilde{C}_\bullet(Y_i)\right] = 0\qquad 
  \text{if $q\neq n$ or $d<n+1$.}
\]
If $q=n$ and $d=n+1$, then $H_n(\widetilde{C}_\bullet(\Delta^n)/\sum
\widetilde{C}_\bullet(Y_i))=\Z$. 
\end{prop}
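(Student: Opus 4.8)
The plan is to interpret the quotient complex as a reduced relative chain complex and to split into two cases according to whether the chosen facets exhaust the boundary of $\Delta^n$.

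First I would introduce $Y=Y_1\cup\cdots\cup Y_d$, the subcomplex of $\Delta^n$ given by the union of the chosen codimension-$1$ faces, and check that for $d\geq 1$ the subcomplex $\sum_{i=1}^d\widetilde{C}_\bullet(Y_i)\subseteq \widetilde{C}_\bullet(\Delta^n)$ coincides with $\widetilde{C}_\bullet(Y)$: in nonnegative degrees both are the span of the simplices contained in some $Y_i$, i.e.\ of the simplices of $Y$, and in degree $-1$ both are the single common copy of $\Z$. When $d=0$ the sum of submodules is $0$, so the quotient complex is just $\widetilde{C}_\bullet(\Delta^n)$, which is acyclic because $\Delta^n$ is contractible; this already gives the asserted vanishing in the range $d<n+1$, so from now on I assume $d\geq 1$.

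Next, for $1\leq d\leq n$ I would show $Y$ is contractible and deduce the vanishing. Write $T\subseteq\{1,\dots,n+1\}$ for the set of vertices with $Y_i$ the face opposite the vertex $i$, so $|T|=d\leq n$; pick a vertex $v_0\notin T$. Any simplex of $Y$ lies in some $Y_i$ with $i\in T$, and since $v_0\neq i$ that simplex together with $v_0$ still lies in $Y_i$; hence $Y$ is the cone with apex $v_0$, so $\widetilde{C}_\bullet(Y)$ is acyclic. The degreewise-split short exact sequence $0\to\widetilde{C}_\bullet(Y)\to\widetilde{C}_\bullet(\Delta^n)\to \widetilde{C}_\bullet(\Delta^n)/\widetilde{C}_\bullet(Y)\to 0$ then has its first two terms acyclic, and its long exact homology sequence forces $H_q$ of the quotient to vanish for every $q$.

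Finally, for $d=n+1$ we have $Y=\partial\Delta^n$, so in the quotient complex every simplex of dimension $<n$ is killed and the degree-$(-1)$ term cancels; what remains is the complex consisting of $\Z$ in degree $n$, generated by the unique top-dimensional simplex, and nothing else. Hence $H_n=\Z$ and $H_q=0$ for $q\neq n$, completing the proof. The only point needing an argument is the contractibility of $Y$ when $d\leq n$, and the cone-point observation disposes of it immediately, so I anticipate no real obstacle; everything else is bookkeeping about which simplices survive in the quotient.
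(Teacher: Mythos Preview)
Your proof is correct. The paper does not give a full argument; it merely remarks that the statement ``amounts to the fact that the quotient $|\Delta^n|/|Y|$ of the $n$-simplex by a subcomplex which is a union of codimension~$1$ faces is either contractible or homeomorphic to a sphere.'' Your argument supplies exactly the missing details: the identification $\sum_i\widetilde{C}_\bullet(Y_i)=\widetilde{C}_\bullet(Y)$, the cone-point observation showing $Y$ is contractible when $d\leq n$, and the direct computation when $Y=\partial\Delta^n$. The only cosmetic difference is that the paper phrases the result via the topological quotient $|\Delta^n|/|Y|$, while you work with the long exact sequence of the pair; these are of course equivalent, and your route is arguably cleaner since it stays at the chain level and avoids invoking that $(\Delta^n,Y)$ is a good pair.
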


\begin{prop}
Let $I=(i_1,\dots,i_r)$ be a sequence.  Then there is an isomorphism
of chain complexes
\[
\gr_I K^r_\bullet \approx
\left[\widetilde{C}_{\bullet-2}(\Delta^{r-2})/
  \sum_{i=1}^d\widetilde{C}_{\bullet-2}(Y_i)\right]\otimes_\Z    A,
\]
where $Y_1,\dots,Y_d$ is a collection of distinct codimension $1$ faces of
$\Delta^{r-2}$.  Here $d$ is size of the set 
\[
T=\set{k\in\{1,\dots,r-1\}}{\text{$i_k\neq 0$ and $i_{k+1}=0$}}.
\]
Thus, $H_q\gr_I K_\bullet^r=0$ unless $q=r$, and $H_r\gr_I
K_\bullet^r$ is a free $A$-module (of rank $0$ or $1$, depending on $I$).
\end{prop}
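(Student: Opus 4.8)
The plan is to make $\gr_I K^r_\bullet$ completely explicit as a complex of free $A$-modules whose differential is extended $A$-linearly from one defined over $\Z$, and then to recognize that $\Z$-complex as the asserted quotient of $\widetilde{C}_{\bullet-2}(\Delta^{r-2})$.  First I would pin down a basis of each $\gr_I K^r_q$: since $\gr_I K_q^r=\bigoplus_{r_1+\cdots+r_q=r}\gr_I\Gamma_{r_1,\dots,r_q}$, \eqref{prop:ss-filtration-lemma} says that the nonzero summands are exactly those for which the unique decomposition $I=I_1\cdots I_q$ with $\len{I_k}=r_k$ has every block $I_k$ admissible, and that each such summand is then free of rank one on the image of $P_{I_1}\otimes\cdots\otimes P_{I_q}$.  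A composition $(r_1,\dots,r_q)$ of $r$ into $q$ positive parts is the same datum as the subset $C=\{r_1,\ r_1+r_2,\ \dots,\ r_1+\cdots+r_{q-1}\}\subseteq\{1,\dots,r-1\}$ of size $q-1$, and one checks that a block $I_k$ can fail to be admissible only by containing some element of $T$ strictly between the endpoints of the range of positions it spans; hence \emph{all} blocks are admissible if and only if $T\subseteq C$.  So the free generators of $\gr_I K^r_q$ are in bijection with the subsets $C\subseteq\{1,\dots,r-1\}$ with $\len{C}=q-1$ and $T\subseteq C$ --- equivalently, with those $(q-2)$-simplices of $\Delta^{r-2}$, taken on the vertex set $\{1,\dots,r-1\}$, that survive in the quotient $\widetilde{C}_\bullet(\Delta^{r-2})/\sum_{t\in T}\widetilde{C}_\bullet(Y_t)$, where $Y_t$ denotes the codimension one face obtained by deleting the vertex $t$.

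Next I would compute the induced differential.  As already noted, $\cF_I K^r_\bullet$ is a subcomplex of $K^r_\bullet$, so the boundary $\partial$ descends to $\gr_I K^r_\bullet$.  On the generator $P_{I_1}\otimes\cdots\otimes P_{I_q}$ the formula $\partial(\phi_1\otimes\cdots\otimes\phi_q)=\sum_{i=1}^{q-1}(-1)^i\phi_1\otimes\cdots\otimes\phi_i\phi_{i+1}\otimes\cdots\otimes\phi_q$ produces in its $i$th term the monomial $P_{I_1}\otimes\cdots\otimes P_{I_iI_{i+1}}\otimes\cdots\otimes P_{I_q}$, which is the corresponding free generator of $\gr_I$ when $I_iI_{i+1}$ is admissible and is $0$ otherwise, by part (1) of \eqref{prop:ss-filtration-lemma}.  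Since $I_i$ and $I_{i+1}$ are separately admissible, $I_iI_{i+1}$ is inadmissible precisely when its junction position --- the $i$th smallest element of $C$ --- lies in $T$.  Thus, under the bijection above, $\partial$ sends the simplex $C$ to $\sum_{c\in C\setminus T}\pm(C\setminus\{c\})$; this is exactly the simplicial boundary of $\Delta^{r-2}$ read in the quotient complex, the disappearance of the terms with $c\in C\cap T$ being precisely the passage to the quotient.  Extending $A$-linearly and shifting degrees by two, this gives the claimed isomorphism $\gr_I K^r_\bullet\approx\bigl[\widetilde{C}_{\bullet-2}(\Delta^{r-2})/\sum_{i=1}^d\widetilde{C}_{\bullet-2}(Y_i)\bigr]\otimes_\Z A$ with $d=\len{T}$.

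The homology statement then follows at once from the elementary proposition above applied with $n=r-2$: one gets $H_q\gr_I K^r_\bullet\approx H_{q-2}\bigl[\widetilde{C}_\bullet(\Delta^{r-2})/\sum\widetilde{C}_\bullet(Y_i)\bigr]\otimes_\Z A$, which vanishes unless $q=r$ and which in degree $r$ is a free $A$-module of rank $1$ if $d=r-1$ and of rank $0$ otherwise.  The one place I expect to need real care is the sign bookkeeping: the bijection between compositions of $r$ and simplices of $\Delta^{r-2}$, that is, the chosen ordering of the vertices $\{1,\dots,r-1\}$, must be arranged so that the signs $(-1)^i$ in $\partial$ agree on the nose with the standard simplicial signs, or else one must carry along an explicit sign twist on the generators intertwining the two differentials --- together with a brief check of the degenerate cases $r=1$, where $\Delta^{-1}$ is empty, and $r=2$, against the conventions for $\widetilde{C}_{-1}$.
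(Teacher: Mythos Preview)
Your proof is correct and follows essentially the same approach as the paper: both identify the generators of $\gr_I K^r_q$ with the $(q-2)$-simplices of $\Delta^{r-2}$ (on vertex set $\{1,\dots,r-1\}$) containing $T$, via the correspondence between compositions and cut-point subsets, and both use \eqref{prop:ss-filtration-lemma} to see that merging two adjacent blocks either gives the corresponding face or zero according to whether the removed cut point lies in $T$. Your caution about the sign bookkeeping is well placed---the paper simply asserts that the map is a chain map, whereas you correctly note that matching $(-1)^i$ with the standard simplicial sign $(-1)^{i-1}$ requires an innocuous sign twist on the generators.
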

\begin{proof}
Given a sequence $I=(i_1,\dots,i_r)$, we define maps 
\[
\phi_I\colon \widetilde{C}_{q-2}(\Delta^{r-2})\ra K_q^r= \bigoplus_{r_1+\cdots+r_q=r}
\Gamma_{r_1,\dots,r_q}
\]
as follows.  If  $s=[1\leq s_1<\cdots<s_{q-1}\leq r-1]$ is a $q-2$-simplex
in 
$\Delta^{r-2}$, then let $\phi_I(s) = P_{I_1}\otimes \cdots \otimes
P_{I_q}$, where 
\[
I_k = (i_{s_{k-1}+1},\dots,i_{s_k})\qquad\text{for $k=1,\dots,q$,
  taking $s_0=0$ and $s_q=r$.}
\]
Note that $I=I_1\cdots I_q$.

Thus $\phi_I\colon
\widetilde{C}_{\bullet-1}(\Delta^{r-2})\ra K_\bullet^r$ is a chain map,
and in fact the image of $\phi_I$ is contained in
$\cF_I K_\bullet^r$.   

Given $k\in \{1,\dots,r-1\}$, let $Y_k\subset \Delta^{r-2}$ denote
the subcomplex consisting of 
the codimension $1$ face spanned by all vertices except $k$.  Let
$Y_{k_1},\dots,Y_{k_d}$ be the collection of all such faces for which
$i_{k_j}\neq 0$ and
$i_{k_j+1}=0$.  By \eqref{prop:ss-filtration-lemma} (1) it follows that
$\phi_I$ factors through a map 
\[
\widetilde{C}_{\bullet-2}(\Delta^{r-2})/\sum_{j=1}^d
\widetilde{C}_{\bullet-2}(Y_{k_j}) \ra 
\cF_IK_\bullet^r/ \cF_{<I}K_\bullet^r
\]
By \eqref{prop:ss-filtration-lemma} (2), we see that this passes to an
isomorphism $A\otimes_\Z \widetilde{C}_{\bullet-2}(\Delta^{r-2})/\sum
\widetilde{C}_{\bullet-2}(Y_{k_j})\approx 
\gr_IK_\bullet^r$.
\end{proof}

\begin{prop}\label{prop:koszul-for-ss}
For all $r\geq0$, we have that $H_jK_\bullet^r=0$ if $j\neq r$, and
that $H_rK_\bullet^r$ is a finitely generated free $A$-module.
\end{prop}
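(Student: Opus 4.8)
The plan is to deduce the statement from the computation of the associated graded pieces $\gr_I K_\bullet^r$ carried out in the preceding proposition, by a routine induction up the monomial filtration.

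First I would dispose of the degenerate cases $r=0$ and $r=1$ by inspection: $K_\bullet^0$ is concentrated in degree $0$ with $K_0^0=A$, and $K_\bullet^1$ is concentrated in degree $1$ with $K_1^1=\Gamma_1$, which is finitely generated and free over $A$. From now on assume $r\geq 2$. Next I would list the finitely many sequences of length $r$ over $\{0,1,\dots,p\}$ in increasing order as $I_0<I_1<\cdots<I_M$, with $I_0=(p,\dots,p)$ the minimum and $I_M=(0,\dots,0)$ the maximum. Since the order on sequences is total, $\cF_{<I_{k+1}}K_\bullet^r=\cF_{I_k}K_\bullet^r$ for each $k<M$, while $\cF_{I_M}K_\bullet^r=\cF_{(0,\dots,0)}K_\bullet^r=K_\bullet^r$. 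The claim I would prove, by induction on $k$, is that $H_j(\cF_{I_k}K_\bullet^r)=0$ for $j\neq r$ and that $H_r(\cF_{I_k}K_\bullet^r)$ is a finitely generated free $A$-module. The base case $k=0$ is immediate, since $\cF_{<I_0}K_\bullet^r=0$ forces $\cF_{I_0}K_\bullet^r=\gr_{I_0}K_\bullet^r$, whose homology is concentrated in degree $r$ and free there by the preceding proposition.

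For the inductive step I would apply the long exact homology sequence to the short exact sequence of chain complexes
\[
0\to \cF_{I_k}K_\bullet^r \to \cF_{I_{k+1}}K_\bullet^r \to \gr_{I_{k+1}}K_\bullet^r \to 0 .
\]
By the preceding proposition $H_j(\gr_{I_{k+1}}K_\bullet^r)$ vanishes for $j\neq r$ and is free of rank $0$ or $1$ for $j=r$, and by the induction hypothesis the same holds for $H_j(\cF_{I_k}K_\bullet^r)$. Comparing degrees in the long exact sequence then yields $H_j(\cF_{I_{k+1}}K_\bullet^r)=0$ for $j\neq r$ together with a short exact sequence
\[
0\to H_r(\cF_{I_k}K_\bullet^r)\to H_r(\cF_{I_{k+1}}K_\bullet^r)\to H_r(\gr_{I_{k+1}}K_\bullet^r)\to 0 .
\]
Since the right-hand term is free, hence projective, this sequence splits, so $H_r(\cF_{I_{k+1}}K_\bullet^r)$ is finitely generated and free. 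Taking $k=M$ proves the proposition; combined with \eqref{prop:dual-formulation} it also establishes statements (1) and (2) of \eqref{main-thm} for the universal deformation, and hence case (C).

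I do not expect a genuine obstacle here: the substantive work is already contained in the identification of $\gr_I K_\bullet^r$ with a quotient of the reduced chain complex of a simplex and the resulting computation of its homology. The only points needing care are bookkeeping ones: verifying that $\cF_{<I_{k+1}}K_\bullet^r=\cF_{I_k}K_\bullet^r$, which is precisely where totality of the ordering on sequences enters; tracking the indices in the long exact sequence so that only the degree-$r$ homology survives; and observing that over the local ring $A=k\powser{x}$ every finitely generated projective module is free, so that the conclusion can be phrased with ``free'' rather than merely ``projective''.
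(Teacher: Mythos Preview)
Your proof is correct and follows essentially the same approach as the paper: both use the monomial filtration and the computation of $\gr_I K_\bullet^r$ from the preceding proposition. The paper phrases the conclusion as a spectral sequence collapse at $E_1$ (since $E_1^{q,I}=H_q\gr_I K_\bullet^r$ vanishes for $q\neq r$), while you unwind this as an explicit induction through long exact sequences; these are the same argument in different language.
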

\begin{proof}
The spectral sequence $E_1^{q,I}=H_q\gr_I K_\bullet^r \Longrightarrow
H_*K_\bullet^r$ collapses trivially, since $E_1^{q,I}=0$ unless $q=r$.
\end{proof}

\section*{Appendix: The polynomials $F_m(x,y)$}

For $m\in \N$ let $F_m(x,y)\in \Z[x,y]$ denote the
polynomial
\[
F_m(x,y) = \prod_{m=de} (x^d-y^e),
\]
where $d,e$ range over all pairs of natural numbers such that $m=de$.

Let $R$ be a commutative ring.  We propose to define a category $D(R)$
as follows.  
The objects of $D(R)$ are the elements of the ring $R$.
The morphisms are given by
\[
\Hom_{D(R)}(a,b)=\set{m\in \N}{F_m(a,b)=0}.
\]
We write $\langle m\rangle\colon a\ra b$ for the morphism corresponding to $m\in
\N$.  
Identity morphisms are those of the form $\langle 1\rangle\colon a\ra a$.

We define the composition of $\langle m\rangle \colon a\ra b$ with
$\langle n\rangle\colon b\ra
c$ to be $\langle mn\rangle\colon a\ra c$.  It is clear that this will
make $D(R)$ 
into a category, as long as composition is well-defined.  That is,
$D(R)$ is a category if $F_m(a,b)=0$ and $F_n(b,c)=0$ implies
$F_{mn}(a,c)=0$ for all $a,b,c\in R$ and $m,n\in \N$.  

We will show that with this composition law, $D(R)$ is in fact a
category for every $R$.  Equivalently, we show that for all $m,n$,
the polynomial $F_{mn}(x,z)$ 
is contained in the ideal $(F_m(x,y),F_n(y,z))$ of $\Z[x,y,z]$.

\begin{lemma}
If $R$ is an integral domain, then $D(R)$ is a category.  Thus, for
every $m,n$, there exists an $N\geq 1$ such that $F_{mn}(x,z)^N\in
(F_m(x,y), F_n(y,z))$.
\end{lemma}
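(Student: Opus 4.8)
The plan is to prove the two assertions in turn, extracting the ideal-membership statement from the categorical one by a prime-by-prime argument inside $\Z[x,y,z]$.

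First, suppose $R$ is an integral domain. I would begin with the elementary observation that $F_m(a,b)=0$ holds in $R$ if and only if there is a factorization $m=de$ in $\N$ with $a^d=b^e$: since $F_m(a,b)=\prod_{m=de}(a^d-b^e)$ is a product in a domain, it vanishes precisely when one of its factors does. Granting this, well-definedness of composition in $D(R)$ is a short exponent computation. Given $\langle m\rangle\colon a\to b$ and $\langle n\rangle\colon b\to c$, choose $de=m$ with $a^d=b^e$ and $d'e'=n$ with $b^{d'}=c^{e'}$; then $a^{dd'}=(a^d)^{d'}=(b^e)^{d'}=(b^{d'})^e=(c^{e'})^e=c^{ee'}$, and $(dd')(ee')=(de)(d'e')=mn$, so $F_{mn}(a,c)=0$, i.e.\ $\langle mn\rangle\colon a\to c$ is a morphism of $D(R)$. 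Associativity of composition and the unit axiom are then immediate: composition is governed by multiplication in the commutative monoid $(\N,\cdot)$, and $\langle 1\rangle\colon a\to a$ is an identity because $F_1(a,a)=a-a=0$. Hence $D(R)$ is a category.

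Next, for the ideal statement, set $I=(F_m(x,y),F_n(y,z))\subseteq\Z[x,y,z]$ and let $\mathfrak p$ be an arbitrary prime ideal of $\Z[x,y,z]$ containing $I$. The quotient $R=\Z[x,y,z]/\mathfrak p$ is an integral domain, and the images $\bar x,\bar y,\bar z$ of $x,y,z$ satisfy $F_m(\bar x,\bar y)=0$ and $F_n(\bar y,\bar z)=0$ (as $F_m(x,y),F_n(y,z)\in I\subseteq\mathfrak p$). Thus $\langle m\rangle\colon\bar x\to\bar y$ and $\langle n\rangle\colon\bar y\to\bar z$ are morphisms of $D(R)$, and composing them via the part just proved gives $F_{mn}(\bar x,\bar z)=0$ in $R$, i.e.\ $F_{mn}(x,z)\in\mathfrak p$. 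Therefore $F_{mn}(x,z)$ lies in every prime ideal of $\Z[x,y,z]$ containing $I$, hence in the radical $\sqrt I$; by the very definition of the radical, $F_{mn}(x,z)^N\in I$ for some $N\geq 1$, which is the claim.

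The step that requires the most care is really the transition from "$F_{mn}$ vanishes in every domain quotient" to a genuine algebraic containment: the temptation is to invoke the Hilbert Nullstellensatz over a field, which would force clearing denominators over $\Q$ and then patching in a separate argument at each prime $p$ dividing the resulting integer. I would avoid this by using instead the (characteristic-free) fact that the radical of an ideal in any commutative ring is the intersection of the prime ideals containing it, which lets the domain case of part (a) do all the work at once. Everything else is bookkeeping with the factorizations $m=de$.
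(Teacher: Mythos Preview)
Your argument is correct and matches the paper's approach: the paper proves the first assertion by exactly the same exponent computation (in a domain, $F_m(a,b)=0$ forces some factor $a^d-b^e$ to vanish, and then $a^{dd'}=b^{d'e}=c^{ee'}$), and leaves the ``Thus'' clause entirely implicit. Your explicit radical argument---passing to $\Z[x,y,z]/\mathfrak{p}$ for each prime $\mathfrak{p}\supseteq I$ and invoking $\sqrt{I}=\bigcap_{\mathfrak{p}\supseteq I}\mathfrak{p}$---is precisely the justification the paper omits, so you have simply filled in what the paper took for granted.
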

\begin{proof}
If $a,b,c\in R$ satisfy $F_m(a,b)=0=F_n(b,c)$, then since $R$ is a
domain there must exist
$d,e,d',e'\in \N$ with $m=de$, $m'=d'e'$, such that $a^d=b^e$ and
$b^{d'}=c^{e'}$, whence $a^{dd'}=b^{d'e}=c^{ee'}$, whence
$F_{mn}(a,c)=0$.
\end{proof}

Let $T_m(x,y)=\Z[x,y]/(F_m(x,y))$.  

\begin{lemma}\label{lemma:F_n-usually-has-distinct-roots}
Let $K$ be a field of characteristic $0$, and let $\phi\colon \Z[x]\ra
K$  be a ring homomorphism such that $a=\phi(x)$ is neither $0$ nor a
root of unity.  Then $A=K\otimes_{\Z[x]}T_m(x,y)$ is isomorphic to a
finite product $\prod K_i$ of fields.  Furthermore, for each $i$ the
the evident
homomorphism $T_m(x,y)\ra A\ra K_i$ sends $y$ to an element
$b_i\in K_i$ which is neither $0$ nor a root of unity.
\end{lemma}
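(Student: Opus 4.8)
The plan is to describe $A$ explicitly and reduce the first assertion to a squarefreeness statement about a one-variable polynomial. Since $\phi\colon\Z[x]\to K$ carries $x$ to $a$, there is an isomorphism $A=K\otimes_{\Z[x]}T_m(x,y)\approx K[y]/(F_m(a,y))$, where $F_m(a,y)=\prod_{m=de}(a^d-y^e)$. This polynomial has $y$-degree $\sum_{m=de}e>0$ and leading coefficient $\pm1$, so $A$ is a nonzero finite-dimensional $K$-algebra; and since $K[y]/(g)$ is a finite product of fields exactly when $g\in K[y]$ is squarefree, it suffices to prove that $F_m(a,y)$ is squarefree.

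I would prove squarefreeness in two steps. First, each factor $a^d-y^e$ is itself squarefree: up to the unit $(-1)^e$ it is $y^e-a^d$, whose derivative $e\,y^{e-1}$ is a nonzero scalar multiple of $y^{e-1}$ (here we use $\charac K=0$), and the only candidate for a repeated root, $y=0$, is ruled out because $a^d\neq0$. Second, any two distinct factors $a^d-y^e$ and $a^{d'}-y^{e'}$ with $de=d'e'=m$ are coprime: a common root $b$ in $\overline K$ would satisfy $b^e=a^d$ and $b^{e'}=a^{d'}$, hence $b^{ee'}=a^{de'}=a^{d'e}$, hence $a^{de'-d'e}=1$ since $a\neq0$, hence $de'=d'e$ since $a$ is not a root of unity; together with $de=d'e'$ this forces $d=d'$ and $e=e'$. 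Therefore $F_m(a,y)$ is a product of pairwise coprime squarefree polynomials, so it is squarefree, and $A\approx\prod_iK_i$ with each $K_i=K[y]/(g_i)$ a finite field extension of $K$.

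For the second assertion, fix $i$ and let $b_i\in K_i$ be the image of $y$. Its minimal polynomial over $K$ is one of the irreducible factors $g_i$ of $F_m(a,y)$, so $g_i$ divides some $a^d-y^e$, giving $b_i^e=a^d$ for a suitable factorization $m=de$. If $b_i=0$ then $a^d=0$ and hence $a=0$, contrary to hypothesis; so $b_i\neq0$. If $b_i^N=1$ for some $N\geq1$, then $a^{dN}=(b_i^e)^N=(b_i^N)^e=1$ with $dN\geq1$, contradicting that $a$ is not a root of unity; so $b_i$ is not a root of unity.

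I do not anticipate a real obstacle: the argument is elementary. The only delicate point --- and the only place where both hypotheses on $a$ (nonzero, and not a root of unity) are genuinely used --- is the coprimality of the distinct factors $a^d-y^e$; everything else is routine bookkeeping with the factorizations $m=de$.
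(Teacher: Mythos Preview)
Your proof is correct and follows essentially the same approach as the paper: both identify $A\approx K[y]/(F_m(a,y))$, show this polynomial has no repeated roots by checking that each factor $y^e-a^d$ is separable and that distinct factors share no root (the paper via $\beta^m=a^{e^2}=a^{e'^2}$, you via $b^{ee'}=a^{de'}=a^{d'e}$), and deduce the claim about $b_i$ from $b_i^e=a^d$. The differences are purely cosmetic.
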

\begin{proof}
  We have that $A\approx K[y]/(T_m(a,y))$.  To show that $A$ is a
  product of fields, it suffices to show that
  the polynomial
  $T_m(a,y)\in K[y]$ has no repeated  roots in the algebraic closure $\bar{K}$
  of $K$.  The polynomial $T_m(a,y)$ is a product (up to sign) of
  factors of the form
   $g_d(y)=y^d-a^e$ where $m=de$.  It is clear that each $g_d$ has $d$
   distinct roots, of the form $\zeta \sqrt[d]{a^e}$ where $\zeta\in
   \mu_d(\bar{K})$ and $\sqrt[d]{a^e}$ some chosen $d$th root of
   $a^e$.  If $\beta\in \bar{K}$ such that
   $g_d(\beta)=0=g_{d'}(\beta)$ where $m=de=d'e'$ with $e>e'$, it is
   straightforward to show that $a^{e^2}=\beta^m = a^{{e'}^2}$, whence
   $a^{{e'}^2}(a^{e^2-{e'}^2}-1)=0$, which is impossible by the
   hypothesis on $a$.  Thus no roots of $T_m(a,y)$ are repeated.

   The homomorphism $T_m(x,y)\ra K_i$ sends $y$ to an element $b_i$ with
   the property that $b_i^{d}=a^e$ for some $m=de$.  Since $a$ is not
   $0$ or a root of unity, neither is $b_i$.
\end{proof}

\begin{prop}
  For all $m,n\geq1$, the polynomial $F_{mn}(x,z)$
  is an element of the ideal $(F_m(x,y),F_n(y,z))$ of $\Z[x,y,z]$.
  Thus, for every commutative ring $R$, $D(R)$ is a well-defined
  category. 
\end{prop}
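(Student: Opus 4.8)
The plan is to produce the containment $F_{mn}(x,z)\in(F_m(x,y),F_n(y,z))$ in $\Z[x,y,z]$ directly; the final assertion about $D(R)$ then follows by pushing this relation forward along the ring map $\Z[x,y,z]\to R$ sending $x,y,z$ to $a,b,c$, which kills the ideal and hence kills $F_{mn}(x,z)$. Write $I=(F_m(x,y),F_n(y,z))$ and $R=\Z[x,y,z]/I$.

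First I would observe that $R$ is free, in particular torsion-free, as a module over $\Z[x]$. Indeed, as a polynomial in $y$ the element $F_m(x,y)=\prod_{m=de}(x^d-y^e)$ has leading coefficient $\pm1$ and degree $\sigma(m)$, so $T_m(x,y)$ is $\Z[x]$-free on $1,y,\dots,y^{\sigma(m)-1}$; similarly $F_n(y,z)$ is monic up to sign of degree $\sigma(n)$ in $z$, so $R=T_m(x,y)[z]/(F_n(y,z))$ is free over $T_m(x,y)$, hence free over $\Z[x]$. Consequently the localization map $R\to R_\eta:=R\otimes_{\Z[x]}\Q(x)=\Q(x)[y,z]/(F_m(x,y),F_n(y,z))$ is injective, and it suffices to show $F_{mn}(x,z)=0$ in $R_\eta$.

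Next I would decompose $R_\eta$ into a finite product of fields by applying \eqref{lemma:F_n-usually-has-distinct-roots} twice. Applying it with $K=\Q(x)$ and the tautological inclusion $\Z[x]\to\Q(x)$ — which is legitimate because $x$ is transcendental over $\Q$, hence neither $0$ nor a root of unity — gives $\Q(x)\otimes_{\Z[x]}T_m(x,y)=\Q(x)[y]/(F_m(x,y))\cong\prod_i K_i$, with each $K_i$ a field of characteristic $0$ and the image $b_i$ of $y$ in $K_i$ neither $0$ nor a root of unity. Then $R_\eta\cong\prod_i\bigl(K_i[z]/(F_n(b_i,z))\bigr)=\prod_i\bigl(K_i\otimes_{\Z[y]}T_n(y,z)\bigr)$, and a second application of \eqref{lemma:F_n-usually-has-distinct-roots}, now with $K=K_i$, with $\Z[y]\to K_i$ carrying $y$ to $b_i$, and with $n$ in the role of $m$, shows each factor is itself a finite product of fields. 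Hence $R_\eta\cong\prod_\nu L_\nu$ with every $L_\nu$ a field of characteristic $0$.

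Finally, in each domain $L_\nu$ the images $\bar x,\bar y,\bar z$ of $x,y,z$ satisfy $F_m(\bar x,\bar y)=0$ and $F_n(\bar y,\bar z)=0$, so by the elementary argument already used for integral domains — there are $d\mid m$ and $d'\mid n$ with $\bar x^{d}=\bar y^{m/d}$ and $\bar y^{d'}=\bar z^{n/d'}$, whence $\bar x^{dd'}=\bar z^{mn/(dd')}$ — we get $F_{mn}(\bar x,\bar z)=0$ in $L_\nu$. Therefore $F_{mn}(x,z)=0$ in $R_\eta$, hence in $R$, i.e.\ $F_{mn}(x,z)\in(F_m(x,y),F_n(y,z))$, and the proposition follows. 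The only point that needs care is the reduction to the generic fibre over $\Q(x)$: once one sees that freeness over $\Z[x]$ allows $\Z[x,y,z]$ to be replaced by $\Q(x)[y,z]$, the structure result \eqref{lemma:F_n-usually-has-distinct-roots} — whose role is precisely to guarantee that separability survives the substitutions $x\mapsto x$ and then $y\mapsto b_i$ — applies twice mechanically, and the remaining steps are bookkeeping with the monic-up-to-sign leading terms.
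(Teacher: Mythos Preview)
Your proof is correct and follows essentially the same strategy as the paper: reduce to the generic fibre over $\Q(x)$ using that $F_m(x,y)$ and $F_n(y,z)$ are monic (up to sign) in $y$ and $z$ respectively, then apply \eqref{lemma:F_n-usually-has-distinct-roots} twice to decompose $R_\eta$ as a product of fields. The only cosmetic difference is that the paper first invokes the preliminary lemma to know $F_{mn}(x,z)^N\in I$ for some $N$ and then argues that $R$ is reduced, whereas you skip the ``some power'' step and verify $F_{mn}(\bar x,\bar z)=0$ directly in each field factor; these amount to the same thing.
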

\begin{proof}
It suffices to show that 
  the ring $T_m(x,y)\otimes_{\Z[y]}T_n(y,z) \approx
  \Z[x,y,z]/(F_m(x,y),F_n(y,z))$ has no nilpotents; since we have
  already shown that $F_{mn}(x,y)$ is nilpotent in this ring, we will
  thus have $F_{m,n}(x,y)\in (F_m(x,y),F_n(y,z))$. 

Let $K=\Q(x)$,
  viewed as a $\Z[x]$-algebra.  
The elements
  $F_m(x,y)$ are monic as polynomials in $y$ with coefficients in
  $\Z[x]$ (up to sign); thus the maps $T_m(x,y)\ra
  K\otimes_{\Z[x]}T_m(x,y)$ and $T_m(x,y)\otimes_{\Z[y]}
  T_n(y,z)\ra K\otimes_{\Z[x]}\otimes
  T_m(x,y)\otimes_{\Z[y]}T_n(y,z)$ are monomorphisms.  Hence, it
  suffices to show that $K\otimes_{\Z[x]}T_m(x,y)\otimes_{\Z[y]}
  T_n(y,z)$ has no nilpotents.  

By \eqref{lemma:F_n-usually-has-distinct-roots}, we see that
$K\otimes_{\Z[x]} T_m(x,y)\approx 
\prod_iK_i$ where $K_i$ are fields.  A second application of the lemma shows
that $K_i\otimes_{\Z[y]} T_n(y,z)\approx \prod_j K_{ij}$ where
$K_{ij}$ are fields, whence
$K\otimes_{\Z[x]}T_m(x,y)\otimes_{\Z[y]}T_n(y,z)\approx
\prod_{i,j}K_{ij}$, which clearly has no nilpotents.
\end{proof}

\begin{bibdiv}
\begin{biblist}
\bib{baker-gonzalez-poonen-finiteness-results}{article}{
  author={Baker, Matthew H.},
  author={Gonz{\'a}lez-Jim{\'e}nez, Enrique},
  author={Gonz{\'a}lez, Josep},
  author={Poonen, Bjorn},
  title={Finiteness results for modular curves of genus at least 2},
  journal={Amer. J. Math.},
  volume={127},
  date={2005},
  number={6},
  pages={1325--1387},
  issn={0002-9327},
  eprint={arXiv:math/0211394},
}

\bib{behrens-lawson-isogenies}{article}{
  author={Behrens, Mark},
  author={Lawson, Tyler},
  title={Isogenies of elliptic curves and the Morava stabilizer group},
  journal={J. Pure Appl. Algebra},
  volume={207},
  date={2006},
  number={1},
  pages={37--49},
  issn={0022-4049},
  eprint={arXiv:math/0508079},
}

\bib{kashiwabara-k2-homology}{article}{
   author={Kashiwabara, Takuji},
   title={$K(2)$-homology of some infinite loop spaces},
   journal={Math. Z.},
   volume={218},
   date={1995},
   number={4},
   pages={503--518},
   issn={0025-5874},
   doi={10.1007/BF02571919},
}

\bib{katz-mazur}{book}{
  author={Katz, Nicholas M.},
  author={Mazur, Barry},
  title={Arithmetic moduli of elliptic curves},
  series={Annals of Mathematics Studies},
  volume={108},
  publisher={Princeton University Press},
  place={Princeton, NJ},
  date={1985},
  pages={xiv+514},
  isbn={0-691-08349-5},
  isbn={0-691-08352-5},
}

\bib{mathoverflow-ss-elliptic-curves}{misc}{
  title={Supersingular elliptic curves and their "functorial" structure over $F_p^2$},
  author={Bjorn Poonen (mathoverflow.net/users/2757)},
  note={URL: \url{http://mathoverflow.net/questions/19013} (version:
    2010-03-22)}, 
  eprint={http://mathoverflow.net/questions/19013},
  organization={MathOverflow},
  label={MO10},
}

\bib{priddy-koszul-resolutions}{article}{
  author={Priddy, Stewart B.},
  title={Koszul resolutions},
  journal={Trans. Amer. Math. Soc.},
  volume={152},
  date={1970},
  pages={39--60},
  issn={0002-9947},
}

\bib{quillen-finite-generation}{article}{
  author={Quillen, Daniel},
  title={Finite generation of the groups $K_{i}$ of rings of algebraic integers},
  conference={ title={Algebraic $K$-theory, I: Higher $K$-theories (Proc. Conf., Battelle Memorial Inst., Seattle, Wash., 1972)}, },
  book={ publisher={Springer}, place={Berlin}, },
  date={1973},
  pages={179--198. Lecture Notes in Math., Vol. 341},
}

\bib{rezk-congruence-condition}{article}{
  author={Rezk, Charles},
  title={The congrugence criterion for power operations in Morava $E$-theory},
  journal={Homology, Homotopy Appl.},
  volume={11},
  date={2009},
  number={2},
  pages={327--379},
  issn={1532-0073},
  eprint={arXiv:0902.2499},
}

\bib{rezk-dyer-lashof-koszul}{article}{
  author={Rezk, Charles},
  title={Rings of power operations for Morava $E$-theories are Koszul},
  date={2011},
  eprint={http://www.math.uiuc.edu/~rezk/dyer-lashof-koszul.dvi},
  status={in preparation},
}

\bib{solomon-steinberg-character}{article}{
  author={Solomon, Louis},
  title={The Steinberg character of a finite group with $BN$-pair},
  conference={ title={Theory of Finite Groups (Symposium, Harvard Univ., Cambridge, Mass., 1968)}, },
  book={ publisher={Benjamin, New York}, },
  date={1969},
  pages={213--221},
}

\bib{strickland-finite-subgroups-of-formal-groups}{article}{
  author={Strickland, Neil P.},
  title={Finite subgroups of formal groups},
  journal={J. Pure Appl. Algebra},
  volume={121},
  date={1997},
  number={2},
  pages={161--208},
  issn={0022-4049},
}

\bib{strickland-morava-e-theory-of-symmetric}{article}{
  author={Strickland, N. P.},
  title={Morava $E$-theory of symmetric groups},
  journal={Topology},
  volume={37},
  date={1998},
  number={4},
  pages={757--779},
  issn={0040-9383},
   eprint={arXiv:math/9801125},
}

\end{biblist}
\end{bibdiv}

\end{document}